\numberwithin{equation}{section}
\numberwithin{figure}{section}
\theoremstyle{plain}
\newtheorem{thm}{Theorem}[section]
\newtheorem{lem}[thm]{Lemma}
\newtheorem{prop}[thm]{Proposition}
\theoremstyle{definition}
\newtheorem{defn}[thm]{Definition}
\newtheorem{asmp}[thm]{Assumption}
\newtheorem{rmk}[thm]{Remark}
\newtheorem{exam}[thm]{Example}
\newcommand{\cB}{\mathcal{B}}
\newcommand{\cF}{\mathcal{F}}
\newcommand{\cG}{\mathcal{G}}
\newcommand{\cP}{\mathcal{P}}
\newcommand{\cQ}{\mathcal{Q}}
\newcommand{\cT}{\mathcal{T}}
\newcommand{\cX}{\mathcal{X}}
\newcommand{\cY}{\mathcal{Y}}
\newcommand{\bF}{\mathbf{F}}
\newcommand{\bbD}{\mathbb{D}} 
\newcommand{\bbE}{\mathbb{E}}
\newcommand{\bbR}{\mathbb{R}}
\newcommand{\scrH}{\mathscr{H}}
\newcommand{\scrM}{\mathscr{M}}
\newcommand{\scrP}{\mathscr{P}}
\newcommand{\scrS}{\mathscr{S}}
\newcommand{\1}{\mathbbm{1}}
\newcommand{\la}{\langle}
\newcommand{\ra}{\rangle}
\newcommand{\md}{\mathop{}\mathopen\mathrm{d}}
\newcommand{\E}{E}
\newcommand{\Id}{\operatorname{Id}}
\newcommand{\M}{\mathrm{Mart}}
\newcommand{\proj}{\operatorname{pj}}
\newcommand{\Law}{\operatorname{Law}}
\newcommand{\mD}{\mathbb{D}}
\newcommand{\MD}{\mathbf{D}}
\renewcommand{\c}{\mathfrak{c}}
\newcommand{\bc}{\mathfrak{bc}}
\newcommand{\op}{\prescript{\mathrm{o}}{}}
\newcommand{\pp}{\prescript{\mathrm{p}}{}}
\newcommand{\lt}{[\![}
\newcommand{\rt}{]\!]}
\newcommand{\os}{\mathrm{OS}}
\def\lb{\mathopen{}\mathclose\bgroup\left}
\def\rb{\aftergroup\egroup\right}
\title{Sensitivity of causal distributionally robust optimization}
\author{Yifan Jiang\thanks{Email: {\tt yifan.jiang@maths.ox.ac.uk}.} }
\author{Jan Ob\l\'oj\thanks{Email: {\tt jan.obloj@maths.ox.ac.uk}.}}
\affil{Mathematical Institute, University of Oxford}
\date{}
\begin{document}

\maketitle

\begin{abstract}
    We study the causal distributionally robust optimization (DRO) in both discrete- and continuous- time settings.
    The framework captures model uncertainty, with potential models penalized in function of their adapted Wasserstein distance to a given reference model. Strength of the penalty is controlled using a real-valued parameter which, in the special case of an indicator penalty,
    is simply the radius of the uncertainty ball.
    Our main results derive the first-order sensitivity of the value of causal DRO with respect to the penalization parameter, i.e., we compute the sensitivity to model uncertainty.
    Moreover, we investigate the case where a martingale constraint is imposed on the underlying model, as is the case for pricing measures in mathematical finance.
    We introduce different scaling regimes, which allow us to obtain the continuous-time sensitivities as nontrivial limits of their discrete-time counterparts. We illustrate our results with examples. The sensitivities are naturally expressed using optional projections of Malliavin derivatives. To establish our results we obtain several novel results which are of independent interest. In particular, we introduce pathwise Malliavin derivatives and show these extend the classical notion. We also establish a novel stochastic Fubini theorem.
    \medskip

    \noindent{\em Keywords}: distributionally robust optimization, sensitivity analysis, causal optimal transport, pathwise Malliavin calculus, forward integral.
    \medskip
\end{abstract}

\section{Introduction}
Stochastic modelling in a dynamic setting is ubiquitous and is a staple of applied mathematics in many domains.
Traditionally, models were derived from theoretical considerations, combined with calibration to data, and often had tractable analytic representations.
More recently, models are often obtained through a data-driven approach, where they are equivalent to, or derived from, the empirical distribution of the sample paths.
In both scenarios, however, the model can easily be misspecified, leading to a discrepancy between the postulated model and the ground-truth distribution.
Therefore, it is of fundamental importance to understand the impact of model uncertainty on the performance of the model.

A distributionally robust approach is particularly appealing in this context.
It is formulated as a minimax problem where the agent seeks the optimal decision under the worst-case distribution from a neighbourhood around the reference model.
In this paper, we focus on the inner maximization  and study  the distributionally robust optimization (DRO) in a dynamic context
\begin{equation}
    \label{eqn-cdro}
    \sup_{\nu\in B_{\delta}(\mu)}\E_{\nu}[f(X)],
\end{equation}
where \(X=(X_{t})_{t\in I}\) is a stochastic process with paths in \(\cX\), \(f:\cX\to\bbR\) a path-dependent function(nal), \(\mu\in\scrP(\cX)\) the reference model, and  \(B_{\delta}(\mu)\) a suitable \(\delta\)-neighbourhood of \(\mu\).
The neighbourhood, i.e., the ambiguity set, is pivotal in the design of DRO and is often tailored to satisfy the statistical constraints of specific applications.
We refer to \citet{rahimianDistributionallyRobustOptimization2019} for a comprehensive exposition.
Among various choices of ambiguity sets, the optimal transport based discrepancy has drawn a great attention in operation research \citep{mohajerin2018data,blanchetQuantifyingDistributionalModel2019,gao2023distributionally,gao2023finite}, mathematical finance \citep{obloj2021distributionally,nendel22parametric,wu2023robust} and machine learning \citep{blanchet_kang_murthy_2019,bai2024wasserstein,NietertWDRO}.

With a suitable cost, optimal transport induces the so-called Wasserstein distance which metrizes the weak topology on probability space.
However, it is well known that the weak topology is too coarse to distinguish two processes with similar laws but different filtrations.
For example, the value of optimal stopping problems is not a continuous function of the law of the process under the weak topology, see \citet{bartlSensitivityMultiperiodOptimization2022}.
This led to many refined topologies and related notions, including \citet{vershik1970decreasing}'s iterated Kantorovich distance,  \citet{aldousWeakConvergenceGeneral1981}'s extended weak topology, \citet{hoover1984adapted}'s adapted equivalence, \citet{hellwigSequentialDecisionsUncertainty1996}'s information topology, \citet{pflug2012distance}'s nested distance and the more recent (bi)causal/adapted Wasserstein distance \citep{pammer24note,bartlWassersteinSpaceStochastic2021} - with all the above topologically equivalent as shown in
\citet{backhoff-veraguasAllAdaptedTopologies2020}. Motivated by their observation, we consider the discrepancy given by a \emph{causal} optimal transport problem
\begin{equation*}
    d_{\c}(\mu,\nu):=\inf_{\pi\in\Pi_{\c}(\mu,\nu)}\E_{\pi}[c(X,Y)],
\end{equation*}
which metrizes the adapted weak topology, and hence fully characterizes the temporal and spatial information of stochastic processes.
Here, the infimum is taken over a set of \emph{causal} couplings which was first systematically studied in \citet{lassalleCausalTransportPlans2018} and in more recent works \citet{acciaioCausalOptimalTransport2020,bartlWassersteinSpaceStochastic2021,beiglbock22Denseness,beiglbock23Representing,bartl25Wasserstein}.
Roughly speaking, the \emph{causality} constraint excludes the transport plans which can peek into the future of the model dynamic.

We fix \(p>1\), take \(B_{\delta}(\mu)=\{\nu\in\scrP(\cX):d_{\c}(\mu,\nu)\leq \delta^{p}\}\), and extend \eqref{eqn-cdro} to a general penalized causal DRO problem
\begin{equation}
    \label{eqn-cdro-pen}
    V(\delta)=\sup_{\nu\in \scrP(\cX)}\lb\{\E_{\nu}[f(X)]-L_{\delta}(d_{\c}(\mu,\nu)^{1/p})\rb\},
\end{equation}
where \(L_{\delta}(\cdot)=\delta L(\cdot/\delta)\) is a parametrized penalization.
Notice that \eqref{eqn-cdro} is of this form by taking \(L=+\infty\1_{(1,\infty)}\).
Moreover, we are interested in the case where an extra martingale constraint is imposed on the underlying model.
We assume the reference model \(\mu\) is a martingale measure and write
\begin{equation}
    \label{eqn-cdro-mart}
    V_{\M}(\delta)=\sup_{\nu\in \scrM(\cX)}\lb\{\E_{\nu}[f(X)]-L_{\delta}(d_{\bc}(\mu,\nu)^{1/p})\rb\},
\end{equation}
where \(\scrM(\cX)\) denotes the set of martingale measures.
For technical reasons, we replace the causal transport cost in the penalization with the \emph{bicausal} transport cost \(d_{\bc}(\mu,\nu)\).
Such a problem is of central interest in the context of robust finance, which gives the deviation of the price of financial derivatives in a misspecified non-arbitrage market.

To study \eqref{eqn-cdro-pen} and \eqref{eqn-cdro-mart}, there are two lines of research: duality and sensitivity.
The duality approach gives an alternative representation of the exact DRO value.
We refer to the works of \citet{blanchetQuantifyingDistributionalModel2019,gao2023distributionally,zhangSimpleGeneralDuality2022,shafiee2024newperspectives} for the static case and \citet{jiang24Duality} for the dynamic case.
In this paper, we follow the sensitivity approach  which aims to provide an explicit first order expansion of the DRO value with respect to the penalization parameter \(\delta\).
In the static setting, it was first derived in \citet{bartlSensitivityAnalysisWasserstein2021} with \(L=+\infty\1_{(1,\infty)}\) and extended to a general penalization in \citet{nendel22parametric}.
Specific dynamic Markovian settings have been recently tackled in \cite{fuhrmann2023wasserstein,langner2024markovnashequilibriameanfieldgames,NeufeldQLearning,mirmominov2024dynamicprogrammingprinciplemultiperiod}.
Partial discrete-time sensitivity results were derived independently from this paper in \citet{bartlSensitivityMultiperiodOptimization2022} for the non-constraint case, and in  \citet{sauldubois24First}  for the martingale-constraint case, see the discussion below. In \citet{bartl2023sensitivity}, the authors worked in a continuous time setting with models represented by It\^o diffusions and considered an ambiguity set defined via drift and volatility coefficients, see Section \ref{sec:further} for discussion.

We set up the discrete-time framework as follows.
Let \(I=\{0,1,\dots,N\}\) and \(\cX=\{0\}\times (\bbR^{d})^{N}\) equipped with its natural filtration \((\cF_{n})_{n\in I}\).
We fix \(p>1\) and \(q=p/(p-1)\).
By \(|\cdot|\) we denote the \(l_{p}\)-norm on \(\bbR^{d}\) and by \(|\cdot|_{*}\) we denote the \(l_{q}\)-norm on \(\bbR^{d}\).
Let \(\Delta:\cX\to\cX\) be the increment map given by \((0,x_{1},\dots,x_{N})\mapsto (0,x_{1},\dots,x_{N}-x_{N-1})\).
We take \(d_{\c}\) as the causal transport cost induced by the cost
\begin{equation*}
    c_{N}(x,y)=\sum_{n=1}^{N}|\Delta x_{n}- \Delta y_{n}|^{p}.
\end{equation*}
In Theorem \ref{thm-sens} we show that under mild regularity conditions the sensitivity of \eqref{eqn-cdro-pen} is given by
\begin{equation}
    \label{eqn-sens}
    \Upsilon:=\lim_{\delta\to 0}\frac{1}{\delta}(V(\delta)-V(0))= L^{*}\lb(\E_{\mu}\lb[\sum_{n=1}^{N}\lb|\E_{\mu}[\mD_{n}f (X)|\cF_{n}]\rb|_{*}^{q}\rb]^{1/q}\rb),
\end{equation}
where \(L^{*}\) is the convex conjugate of \(L\), and  \(\mD=(\mD_{1},\dots,\mD_{N})\) is the pullback of \(\nabla=(\partial_{1},\dots,\partial_{N})\) under \(\Delta\).
For the martingale constraint problem \eqref{eqn-cdro-mart}, in the case \(p=2\), it is shown in Theorem~\ref{thm-sens-mart} that
\begin{align}
    \label{eqn-sens-mart}
    \Upsilon_{\M}: & =\lim_{\delta\to 0}\frac{1}{\delta}(V_{\M}(\delta)-V_{\M}(0))                                                                  \nonumber \\
                   & = L^{*}\lb(\E_{\mu}\lb[\sum_{n=1}^{N}\lb|\E_{\mu}[\mD_{n}f (X)|\cF_{n}]-\E_{\mu}[\mD_{n}f (X)|\cF_{n-1}]\rb|_{*}^{2}\rb]^{1/2}\rb).
\end{align}
Our discrete-time results \eqref{eqn-sens} and \eqref{eqn-sens-mart} thus offer a multistep extension to the results in \citet{bartlSensitivityAnalysisWasserstein2021}.
We note that a variant of the unconstrained case \eqref{eqn-sens}  was established independently in \citet{bartlSensitivityMultiperiodOptimization2022} with \(L=+\infty\1_{(1,\infty)}\) and \(c(x,y)=\sum_{n=1}^{N}|x_{n}-y_{n}|^{p}\), see Remark \ref{rmk:linkwithDaniJohannes} for a detailed discussion.

With the discrete-time results in hand, we investigate the corresponding continuous-time limits in different scaling regimes.
We focus on stochastic processes with paths in \(\cX=C_{0}([0,T];\bbR^{d})\), i.e., the space of continuous paths starting at \(0\).
For any path \(\omega\in\cX\), we denote by \(\omega^{N}=(0,\omega_{T/N},\dots,\omega_{T})\) its discretization.
We  first  introduce the \emph{hyperbolic} scaling
\begin{equation*}
    c(\omega,\eta):=\lim_{N\to\infty}N^{p-1}c_{N}(\omega^{N},\eta^{N})= \|\omega-\eta\|^{p}_{W^{1,p}_{0}},
\end{equation*}
where the Sobolev norm is given by \(\|\omega\|_{W^{1,p}_{0}}=\|\dot{\omega}\|_{L^{p}}\).
Under such scaling, we show in Theorem~\ref{thm-cont} that the sensitivity of \eqref{eqn-cdro-pen} is given by
\begin{equation}
    \label{eqn-sens-cont}
    \Upsilon=\lim_{\delta\to 0}\frac{1}{\delta}(V(\delta)-V(0))=L^{*}\lb(\E_{\mu}\lb[\int_{0}^{T}|\E_{\mu}[\mD_{t}f(X)|\cF_{t}]|_{*}^{q}\md t\rb]^{1/q}\rb),
\end{equation}
where \(\mD_{t}\) is the  \emph{pathwise} Malliavin derivative (intuitively) given by
\begin{equation*}
    \la \mD_{t}f(\omega),e\ra :=\lim_{\varepsilon\to 0}\frac{f(\omega+\varepsilon e\1_{[0,t]})-f(\omega)}{\varepsilon}.
\end{equation*}
Indeed, \eqref{eqn-sens-cont} is a natural continuous-time limit of \eqref{eqn-sens} if we view \(\mD_{t}\) as a continuous-time extension to \(\mD_{n}\).
We defer the formal definition of \(\mD\) and the relation to the classical Malliavin derivative to Section \ref{sec-pre}. However, a similar limiting argument under the martingale constraint would feature the difference of the optional and predictable projections, which is known to be a thin process
and would thus give \(\Upsilon_{\M}=0\). Alternatively, this is also clear from Doob's decomposition: processes at a finite bi-causal distance under \( c(\omega,\eta)=\|\omega-\eta\|^{p}_{W^{1,p}_{0}}\) will only differ by their finite variation part. In fact, there is no other martingale measure in any Wasserstein (bi)causal neighbourhood of a given martingale measure \(\mu\).

Put differently, the \emph{hyperbolic} scaling is not critical for the martingale constraint problem. To allow ambiguity in the volatility, we have to zoom out and introduce the \emph{parabolic} scaling  (heuristically) given  by
\begin{equation*}
    c(\omega,\eta)=\lim_{N\to\infty}c_{N}(\omega^{N},\eta^{N})= [\omega-\eta]_{T},
\end{equation*}
where \([\cdot]_{T}\) denotes the scalar quadratic variation at terminal time \(T\).
We take  \(\mu\) as the Wiener measure, denote the classical Malliavin derivative by \(\MD\) and  focus on the objective of the form of
\begin{equation*}
    f(X)=U\lb(H\rb),\quad \textrm{for } H=\int_{0}^{T} \la h(t,X_{\cdot\wedge t}), \md X_{t}\ra,
\end{equation*}
for reasons explored in Remark \ref{rmk:typeofpayoffmg}. In Theorem \ref{thm-cont-mart},  we derive the sensitivity
\[\Upsilon_{\M}=L^{*}\lb(\E_{\mu}\lb[\int_{0}^{T}\|\varphi_{t}\|_{\bF}^{2}\md t\rb]^{1/2}\rb),\]
where
\begin{equation*}
    \varphi_{t}= \prescript{\mathrm{p}}{}{\lb\{\MD_{t+}\MD_{t}U(H)-  U'(H) \MD_{t}^{\intercal}h(t,X_{\cdot\wedge t}) \rb\}},
\end{equation*}
and \({}^{\intercal}\) denotes the linear transpose.

The terms \emph{hyperbolic} and \emph{parabolic} scaling are borrowed from the literature on hydrodynamic limits of interacting particle systems, see \cite{kipnis99Scaling}.
Roughly speaking, if the microparticle system has a non-zero mean, its distribution profile converges to a macroscopic continuity equation under hyperbolic scaling, characterizing a drift process.
Conversely, if the particle system is centreed, its distribution profile converges to a macroscopic Fokker-Planck equation  under parabolic scaling, characterizing a diffusion process.
In our context, hyperbolic scaling corresponds to the model's drift uncertainty, while parabolic scaling corresponds to its volatility uncertainty.
In particular, the adverse distribution is approximately a perturbation of the reference model by a drift process and a diffusion process, respectively.

For the continuous-time results, both scaling regimes are novel since we do not require an inner product structure on \(C_{0}([0,T];\bbR^{d})\).
Under hyperbolic scaling, essentially we derive a \emph{pathwise} first order expansion along any path \(h\) in the Sobolev space \(W_{0}^{1,p}([0,T];\bbR^{d})\).
We introduce a \emph{pathwise} Malliavin derivative such that the directional derivative of \(f\) along \(\eta\) can be represented as \(\la\mD f,\dot{\eta}\ra\).
For parabolic scaling, we study the property of the forward integral against a martingale \(\int_{0}^{T} \la \Phi_{t}, \md^{-} M_{t}\ra\).
A new stochastic Fubini theorem is established in Theorem~\ref{thm-fubini} which states that we can swap the order of the forward integral and the It\^o integral with an extra correction term.
The sensitivity  \(\Upsilon_{\M}\) then follows from a first order expansion in Lemma~\ref{lem-wiener}.

To the best of our knowledge, this is the first work that studies the continuous-time DRO sensitivity as a limit of its discrete-time counterpart.
The closest setting in continuous-time is the recent work \citet{bartl2023sensitivity} in which the authors consider $L^p$-balls around the drift and the volatility of the reference model under  a strong formulation. We discuss the link between our works in detail in Section \ref{sec:further} below. \cite{herrmann2017model,herrmann2017hedging} also considered related sensitivities albeit in a more specific setup with the penalty in \eqref{eqn-cdro-pen} depending on the payoff in a way which makes the sensitivity universal and independent of agent's risk aversion.

Naturally, the issue of model uncertainty and its impact on agents' course of action is a topic with a long history, including in decision theory and mathematical finance. Any lecture in mathematical finance covering derivatives' pricing, will typically also cover the `Greeks', the sensitivities of the prices to key model parameters, used throughout the financial industry.
Parametric, or similarly specific, sensitivities of optimal investment problems have been considered in a number of works, see \cite{larsen2007stability,mostovyi2019sensitivity} and the references therein. On a more abstract level, robust utility maximization corresponds to our problem \eqref{eqn-cdro-pen}, with an additional external optimization over controls and possibly with different penalties. It benefits from an axiomatic decision-theoretic justification, going back to \cite{gilboa89,maccheroni2006ambiguity}. Its analytic properties have been studied in depth, see, for example, \cite{tevzadze2013robust,neufeld2018robust} for existence of optimal strategies, \cite{schied,gordan,kallblad2018dynamically} for formulation of a dynamic programming principle and \cite{matoussi2015robust} for relation to 2BSDEs. However, except special cases, these works rarely allow for any explicit computations.

The rest of the paper is organized as follows.
In Section \ref{sec-pre}, we introduce the necessary notations and concepts.
In Section \ref{sec-tools}, we introduce a notion of \emph{pathwise} Malliavin derivative and discuss some properties of forward integral when the integrator is a martingale.
Some proofs of the results in this section are postponed to Section \ref{sec-aux}.
In Section \ref{sec-main}, we present our main results on causal Wasserstein DRO sensitivities including examples, and discuss their possible extensions.
A unified framework of the proofs is presented in Section \ref{sec-proof}.
In Sections \ref{sec-discrete}, \ref{sec-hyperbolic},and \ref{sec-parabolic}, we derive the discrete-time sensitivity, the continuous-time sensitivity under hyperbolic scaling, and the continuous-time sensitivity under parabolic scaling, respectively.

\section{Preliminaries}
\label{sec-pre}
\subsection{Notations}
For a  Polish space \(\cX\), we equip it with its Borel \(\sigma\)-algebra \(\cB(\cX)\).
Let \(\scrP(\cX)\) be the space of Borel probability measures on \(\cX\) equipped with its weak topology.
Given \(\mu\in\scrP(\cX)\) and a \(\sigma\)-algebra \(\cF\subseteq\cB(\cX)\), we denote the completion of \(\cF\) under \(\mu\)  by \(\prescript{\mu}{}\cF\).

Let \(\cX\) and \(\cY\) be two Polish spaces.
Given \(\mu\in\scrP(\cX)\) and \(\nu\in\scrP(\cY)\),  the set of couplings between \(\mu\) and \(\nu\) is defined as
\begin{equation*}
    \Pi(\mu,\nu):=\{\pi\in\scrP(\cX\times\cY):\pi(\cdot\times\cY)=\mu(\cdot)\text{ and } \pi(\cX\times \cdot)=\nu(\cdot)\}.
\end{equation*}
We also denote the set of couplings with a fixed first marginal \(\mu\) by
\begin{equation*}
    \Pi(\mu,*):=\{\pi\in\scrP(\cX\times\cY):\pi(\cdot\times\cY)=\mu(\cdot)\}.
\end{equation*}
Given \(\mu\in\scrP(\cX)\) and a measurable map \(\Phi:\cX\to\cY\), we define the \emph{pushforward} map \(\Phi_{\#}:\scrP(\cX)\to\scrP(\cY)\) by
\begin{equation*}
    \Phi_{\#}\mu:=\mu\circ \Phi^{-1} \quad\text{for any } \mu\in\scrP(\cX).
\end{equation*}
For any \(\pi\in \scrP(\cX\times \cY)\), we write
\begin{equation*}
    \pi(\md x,\md y)=\pi(\md x) \pi_{y}(x,\md y),
\end{equation*}
where  \(\pi_{y}\) is the Borel regular disintegration kernel.

Throughout the paper, we fix \(N,d\geq 1\), \(T>0\), \(p>1\) and let \(q=p/(p-1)\). We use the generic notation \(\langle\cdot,\cdot\rangle\) for the scalar product, where the space is clear from the context. We consider stochastic processes taking value in \(\bbR^{d}\) and recall that we use the notation \(|\cdot|\) for the \(l_{p}\)-norm on \(\bbR^{d}\), and \(|\cdot|_{*}\) for the \(l_{q}\)-norm on \(\bbR^{d}\).
In discrete-time, we take time index \(I=\{0,1,\dots,N\}\) and refer to the canonical path space as \(\cX=\{0\}\times(\bbR^{d})^{N}\) equipped with its natural filtration;
in continuous-time, we take time index \(I=[0,T]\) and refer to the canonical path space as \(\cX=C_{0}([0,T];\bbR^{d})\) equipped with the right continuous augmentation of its natural filtration.
Let \(W_{0}^{1,p}\) be the Sobolev subspace of \(C_{0}\) equipped with the norm
\begin{equation*}
    \|x\|_{W_{0}^{1,p}}=\|\dot{x}\|_{L^{p}}.
\end{equation*}

Given a  filtered probability space, \((\Omega,\cF,(\cF_{t})_{t\in I},P)\) and a measurable (not necessary adapted) process \(A:\Omega \to \cX\), we denote its natural filtration by \((\cF_{t}^{A})_{t\in I}\) where \(\cF_{t}^{A}=\sigma(A_{s}: s\in[0,t] )\).
In discrete time we write
\begin{equation*}
    \|A\|_{L^{q}(\mu)}=\lb(\sum_{n=1}^{N}\E_{\mu}\lb[|A_{n}|_{*}^{q}\rb]\rb)^{1/q};
\end{equation*}
and in continuous time we write
\begin{equation*}
    \|A\|_{L^{q}(\mu)}=\lb(\int_{0}^{T}\E_{\mu}\lb[|A_{t}|_{*}^{q}\rb]\md t\rb)^{1/q}.
\end{equation*}
For \(A\) with \(\E_{P}[\|A\|_{\infty}]<\infty\), we denote the optional projection of \(A\) by \(\op A\) which is the  unique optional process (up to a \(P\)-null set) such that for any bounded optional stopping time \(\tau\)
\begin{equation*}
    \op A_{\tau}=\E_{P}[A_{\tau}|\cF_{\tau}].
\end{equation*}
We denote the predictable projection of \(A\) by \(\pp A\) which is the  unique process (up to a \(P\)-null set) such that for any bounded predictable stopping time \(\tau\)
\begin{equation*}
    \pp A_{\tau}=\E_{P}[A_{\tau}|\cF_{\tau-}].
\end{equation*}
Note that in discrete time there are no issues with the pathwise regularity, and the projections are simply given by
\begin{equation*}
    \op X_{n}=\E[X_{n}|\cF_{n}] \quad\text{and}\quad \pp X_{n}=\E[X_{n}|\cF_{n-1}].
\end{equation*}

For a multidimensional martingale \(M\), we denote \(\lt M\rt\) its matrix-valued quadratic variation process and \([M]\) its trace, which we refer to as the scalar quadratic variation process. In the sequel, we use \emph{regular martingale} to refer to a continuous-time square-integrable continuous martingale \(M\) with absolutely continuous quadratic variation.
By \(\scrM(\cX)\) we denote the space of regular martingale measures on \(\cX\).
By \(\la\cdot, \cdot\ra_{\bF}\) and \(\|\cdot\|_{\bF}\) we denote the Frobenius inner product and norm respectively.

We adopt Landau symbols \(o\) and \(O\).
By \(o(r)\) we denote a quantity bounded by \(l(r)r\)  with \(\lim_{r\to 0}l(r)=0\), and by \(O(r)\) we denote a quantity bounded by \(l(r)r\) with \(\lim_{r\to 0}l(r)<\infty\).
We stress that \(l(r)\) is deterministic and independent of the underlying probability measure.

\subsection{Causal optimal transport}
Casual transport is an analogy of the classical transport in a dynamic context with the constraint that the transport plan is conditionally independent of the future given the current information.
Such a constraint leads to a finer ambiguity set than the classical optimal transport and can capture the temporal structure between stochastic processes.

Let \((\cX,\cF,(\cF_{t})_{t\in I})\) be the canonical path space, and \(X\) the canonical process. We follow the general \textit{causal} optimal transport framework introduced by \cite{lassalleCausalTransportPlans2018,acciaioCausalOptimalTransport2020}.
Let \((\cY,\cG,(\cG_{t})_{t\in I})\) be a copy of  \((\cX,\cF,(\cF_{t})_{t\in I})\).
\begin{defn}[\cite{acciaioCausalOptimalTransport2020}]
    Given \(\mu\in\scrP(\cX)\) and \(\nu\in\scrP(\cY)\), we say a coupling \(\pi\in\Pi(\mu,\nu)\) is \textit{causal} from \(\mu\) to \(\nu\) if for any \(t\in I\) and \(U\in \cG_{t}\)
    \begin{equation}
        \label{def-caus}
        \cX\ni x\mapsto \pi_{y}(x,U)\in\bbR
    \end{equation}
    is \(\prescript{\mu}{}\cF_{t}\)--measurable.
    We say \(\pi\in\Pi(\mu,\nu)\) is bicausal if it is causal and \([(x,y)\mapsto (y,x)]_{\#}\pi\in \Pi_{\c}(\nu,\mu)\).
    We denote the set of causal (bicausal) couplings from \(\mu\) to \(\nu\) by \(\Pi_{\c}(\mu,\nu)\) \((\Pi_{\bc}(\mu,\nu))\).
\end{defn}

The motivation \citep{backhoff-veraguasCausalTransportDiscrete2017} behind the  above definition is the following: if we assume a \textit{causal} transport plan \(\pi\) is generated by a Monge map \(\Phi:\cX\to\cY\) through \(\pi=(\Id,\Phi)_{\#} \mu\), then \(\Phi\) is \textit{non-anticipative} (adapted) in the sense that
\begin{equation*}
    \Phi^{-1}(\cG_{t})\subseteq \cF_{t}.
\end{equation*}
Let cost \(c:\cX\times\cY\to \bbR\).
For any \(\mu,\nu\in\scrP_{p}(\cX)\), the optimal causal transport cost is given by
\begin{equation}
    d_{\c}(\mu,\nu):=\inf_{\pi\in \Pi_{\c}(\mu,\nu)}\E_{\pi}[c(X,Y)].
\end{equation}
Similarly, the optimal bicausal transport cost \(d_{\bc}(\mu,\nu)\) is defined by replacing \(\Pi_{\c}(\mu,\nu)\) with \(\Pi_{\bc}(\mu,\nu)\).
The following is a direct consequence of \citet[Remark 2.3 (4)]{acciaioCausalOptimalTransport2020}.
\begin{prop}
    \label{prop-mart}
    Let \(\mu,\nu\in \scrM(\cX)\). Then \(\pi\in \Pi_{\bc}(\mu,\nu)\) implies that \((X,Y)\) is an \((\cF_{t}\otimes \cG_{t})_{t\in I}\)--martingale. Suppose additionally that \(X\) and \(Y\) have the martingale representation property under \(\mu\) and \(\nu\) respectively. Then  \((X,Y)\) is an \((\cF_{t}\otimes\cG_{t})_{t\in I}\)--martingale under \(\pi\in \Pi(\mu,\nu)\) implies that \(\pi\) is bicausal.
\end{prop}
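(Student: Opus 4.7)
The plan is to handle the two implications separately, using a standard reformulation of causality as conditional independence. First I would recall the equivalence: $\pi\in\Pi_{\c}(\mu,\nu)$ if and only if, for every $t\in I$, the $\sigma$-algebras $\cF_{T}$ and $\cG_{t}$ are conditionally independent given $\cF_{t}$ under $\pi$. This is a routine consequence of \eqref{def-caus} and the disintegration $\pi(\md x,\md y)=\mu(\md x)\pi_{y}(x,\md y)$; bicausality then gives the symmetric statement with the roles of $\cF$ and $\cG$ swapped.

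For the forward direction (bicausal $\Rightarrow$ joint martingale), I would fix $s\geq t$, $A\in\cF_{t}$ and $B\in\cG_{t}$, and compute
\begin{equation*}
    \E_{\pi}[X_{s}\1_{A}\1_{B}]=\E_{\pi}\lb[\1_{A}\,\E_{\pi}[X_{s}\mid\cF_{t}]\,\E_{\pi}[\1_{B}\mid\cF_{t}]\rb]=\E_{\pi}[X_{t}\1_{A}\1_{B}],
\end{equation*}
where the first equality uses the conditional independence from causality of $\mu\to\nu$, and the second uses $\E_{\pi}[X_{s}\mid\cF_{t}]=X_{t}$, which holds because the first marginal is $\mu\in\scrM(\cX)$. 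Since such $A\otimes B$ generate $\cF_{t}\otimes\cG_{t}$, this gives the martingale property of $X$ in the product filtration, and the symmetric argument using causality of $\nu\to\mu$ handles $Y$.

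For the reverse direction, I would fix $t$ and a bounded $\cG_{t}$-measurable $g$, and set $M_{s}:=\E_{\pi}[g\mid\cF_{s}]$. Since the first marginal is $\mu$, $M$ is a bounded $\cF$-martingale under $\mu$; MRP for $X$ under $\mu$ yields an $\cF$-predictable $H$ with $M_{T}-M_{t}=\int_{t}^{T}H\,\md X$. The key observation is now that the joint martingale hypothesis means $X$ remains a martingale in the enlarged filtration $(\cF_{s}\otimes\cG_{s})$; since $H$ stays predictable there, the stochastic integral computed in the enlarged filtration coincides with the one computed in $\cF$ and is an $(\cF_{s}\otimes\cG_{s})$-martingale. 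Since $g$ is $\cF_{t}\otimes\cG_{t}$-measurable, this yields $\E_{\pi}[g(M_{T}-M_{t})]=0$, and combining with $\E_{\pi}[(M_{T}-M_{t})^{2}]=\E_{\pi}[g(M_{T}-M_{t})]$ forces $M_{T}=M_{t}$ a.s. Hence $\E_{\pi}[g\mid\cF_{T}]=\E_{\pi}[g\mid\cF_{t}]$ for all such $g$, which is precisely causality of $\mu\to\nu$; by symmetry and MRP for $Y$ under $\nu$, we obtain bicausality.

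The only delicate point I expect is the invariance of the stochastic integral under the enlargement of filtration from $\cF$ to $\cF\otimes\cG$. The joint martingale hypothesis is exactly what makes this harmless: the quadratic variation of $X$ is unchanged and the integrand remains admissible, so no $\mathbf{H}'$-type hypothesis or compensator correction is needed. In discrete time this step is trivial (just one step of the tower property), and it is the continuous-time version that genuinely uses the joint martingale assumption in full strength.
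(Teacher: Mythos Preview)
Your proof is correct. The paper does not supply its own argument for this proposition; it simply states that it is a direct consequence of \citet[Remark~2.3(4)]{acciaioCausalOptimalTransport2020}. Your write-up is a clean self-contained reconstruction: the forward direction via the conditional-independence reformulation of causality, and the converse via the martingale-representation trick (representing $\E_\pi[g\mid\cF_\cdot]$ as a stochastic integral against $X$ and using that $X$ remains a martingale in the enlarged filtration to kill the increment), are exactly the ideas one expects to find behind that citation. Your remark that the filtration-enlargement step is the only delicate point, and that the joint-martingale hypothesis is precisely what makes the stochastic integral invariant, is spot on.
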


\begin{rmk}
    In general, the above proposition does not hold for causal couplings.
    For example, we consider L\'evy's noncanonical representation of the Brownian motion: \(X_{t}=\int_{0}^{t}\{3-12 (s/t)+10(s/t)^{2}\}\md B_{s}\).
    This induces a causal coupling between the Wiener measures and it is not bicausal.
    In particular, \(X\) is not an \((\cF_t^X\vee \cF_t^B)_{t\in I}\)--martingale.
\end{rmk}
In discrete-time setting, \cite[Lemma 3.1]{bartlSensitivityMultiperiodOptimization2022}  shows bicausal couplings \(\Pi_{\bc}(\mu,*)\) are dense in the set of causal couplings \(\Pi_{\c}(\mu,*)\).
The proof immediately adapts if instead of couplings we consider transport maps.
Here, we observe that both also extend to the setting under a martingale constraint.
We state the results for maps as this is the version we use later on.
The proof is deferred to Section \ref{sec-aux}.
\begin{prop}
    \label{prop-dense}
    Let \(\cX=\{0\}\times(\bbR^{d})^{N}\), \(\mu\in\scrP(\cX)\) and \(\pi=(\Id,\Phi)_{\#} \mu\in \Pi_{\c}(\mu,*)\).
    Then for any \(\varepsilon>0\), there exists \(\Phi^{\varepsilon}\) such that \(\|\Phi-\Phi^{\varepsilon}\|_{\infty}<\varepsilon\) and \((\Id,\Phi^{\varepsilon})_{\#}\mu\in \Pi_{\bc}(\mu,*)\).
    Moreover, if \(\mu, \Phi_{\#}\mu\in \scrM(\cX)\) then $\Phi^{\varepsilon}$ can be taken such that \(\Phi^{\varepsilon}_{\#}\mu\in \scrM(\cX)\).
\end{prop}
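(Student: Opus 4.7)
The proof plan combines a discretization of $\Phi$ with a small smooth perturbation that renders the coordinate maps injective step-by-step, a sufficient condition for bicausality of Monge couplings. Specifically, if for each $n$ and $\mu$-a.e.\ history $(x_1,\ldots,x_{n-1})$ the section $x_n\mapsto\Phi_n(x_1,\ldots,x_{n-1},x_n)$ is $\mu$-essentially injective, then $x_n$ can be recovered from $y_n$ after inverting the previous steps, producing a causal inverse $\Psi=\Phi^{-1}$; equivalently, $(\Psi,\Id)_{\#}(\Phi_{\#}\mu)$ is causal, which is exactly the bicausality of $(\Id,\Phi)_{\#}\mu$. The strategy is therefore to perturb $\Phi$ so as to meet this step-by-step injectivity condition while staying within $\varepsilon$ in sup norm.

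For the first assertion, I would proceed in two stages. First, approximate each $\Phi_n$ in $L^{\infty}(\mu)$ by a simple causal map
\[
    \tilde{\Phi}_n(x)=\sum_{j,l}v_{n,j,l}\,\1_{A_{n-1,j}}(x_1,\ldots,x_{n-1})\,\1_{B_{n,j,l}}(x_n),
\]
where $\{A_{n-1,j}\}$ is a finite partition of $\bbR^{d(n-1)}$ refining the step-$(n{-}1)$ partition (so causality is preserved) and $\{B_{n,j,l}\}_l$ is a finite partition of $\bbR^{d}$ per parent cell, with $\|\Phi_n-\tilde{\Phi}_n\|_{\infty}<\varepsilon/(2N)$. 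Second, add an injective perturbation
\[
    \Phi^{\varepsilon}_n(x):=\tilde{\Phi}_n(x)+\delta_n\,g(x_n),
\]
where $g:\bbR^{d}\to B_1(0)$ is a smooth bounded injection (for example $g(x)=x/\sqrt{1+|x|^{2}}$) and $\delta_n>0$ is chosen small enough that (i) $\delta_n\|g\|_{\infty}<\varepsilon/(2N)$ and (ii) $2\delta_n\|g\|_{\infty}$ is dominated by the minimum gap $\min_{l\ne l'}|v_{n,j,l}-v_{n,j,l'}|$ on each parent cell $A_{n-1,j}$. If the $v_{n,j,l}$'s fail to be mutually distinct on some cell, first nudge them by an arbitrarily small amount, absorbing the modification into the $\varepsilon/(2N)$ budget. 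Cellwise injectivity of $\Phi^{\varepsilon}_n(x_1,\ldots,x_{n-1},\cdot)$ then follows from that of $g$, and distinctness across cells is forced by the gap condition, so $(\Id,\Phi^{\varepsilon})_{\#}\mu\in\Pi_{\bc}(\mu,*)$ by the characterization above, while $\|\Phi-\Phi^{\varepsilon}\|_{\infty}<\varepsilon$ by the triangle inequality.

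For the martingale case the same construction is run while enforcing the conditional mean at every step. Pick the partitions $\{A_{n-1,j}\}$ to respect the filtration generated by $\tilde{\Phi}_{n-1}$ (so that $\tilde{\Phi}_{n-1}\equiv u_{n-1,j}$ on $A_{n-1,j}$), and choose the representatives as conditional barycenters $v_{n,j,l}=E_{\mu}\bigl[\Phi_n\,\big|\,A_{n-1,j}\cap\{X_n\in B_{n,j,l}\}\bigr]$. The martingale property of $\Phi_{\#}\mu$, transported back to $\mu$ via causality, then forces the row balance $\sum_l p_{n,j,l}\,v_{n,j,l}=u_{n-1,j}$ with $p_{n,j,l}=\mu(X_n\in B_{n,j,l}\mid A_{n-1,j})$, so $\tilde{\Phi}_{\#}\mu\in\scrM(\cX)$. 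The distinctness nudge of $(v_{n,j,l})_l$ must now live in the hyperplane $\{w:\sum_l p_{n,j,l}w_l=0\}$, which has positive dimension $d(L-1)$ as soon as each parent cell is refined into $L\geq 2$ $\mu$-charged subcells, a condition ensured by refining the $x_n$-partition finely enough relative to the $\mu$-conditional law of $X_n$. Finally, replace $g(x_n)$ by its conditionally centered version $g(x_n)-E_{\mu}[g(X_n)\mid\cF_{n-1}]$, which is $\cF_n$-measurable, has vanishing conditional mean given $\cF_{n-1}$, and (being centered by an $\cF_{n-1}$-measurable random constant) still renders $\Phi^{\varepsilon}_n(x_1,\ldots,x_{n-1},\cdot)$ injective in $x_n$. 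The resulting $\Phi^{\varepsilon}$ is bicausal by the characterization and $\Phi^{\varepsilon}_{\#}\mu\in\scrM(\cX)$ by construction.

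The main obstacle I anticipate is the simultaneous enforcement of (a) the sup-norm bound $<\varepsilon$, (b) step-by-step injectivity for bicausality, and (c) the conditional-mean identities in the constrained case. These pull in opposite directions: injectivity demands a non-degenerate perturbation in the $x_n$-direction, while sup-norm smallness and inter-cell distinctness demand a small one, and the martingale identity further restricts the perturbation to a codimension-$d$ hyperplane. The delicate point is to choose the discretization's partitions fine enough that the discretized map still enjoys the martingale identity and that every parent cell carries at least two charged subcells (so the hyperplane is non-degenerate), but not so fine that the value gaps collapse before $\delta_n g$ can dominate them. This requires a sequential and adaptive choice of partitions and $\delta_n$'s, closely following the unconstrained construction in \citet[Lemma~3.1]{bartlSensitivityMultiperiodOptimization2022}.
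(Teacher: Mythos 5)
Your overall strategy—discretize the last-step map and add a small bounded injection to force step-by-step injectivity—is the same as the paper's, and your reduction of bicausality to step-by-step $\mu$-essential injectivity is the right criterion. However, the martingale construction as you describe it has a genuine gap that the paper's argument is specifically built to avoid.

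The issue is the bookkeeping of the conditional mean across steps. You choose the representatives $v_{n,j,l}$ as conditional barycenters, which indeed gives the row balance $\sum_l p_{n,j,l}\, v_{n,j,l} = E_\mu[\Phi_n \mid A_{n-1,j}]$, a step function of $(x_1,\ldots,x_{n-1})$. You then add the conditionally centred perturbation $\delta_n\bigl(g(x_n)-E_\mu[g(X_n)\mid\cF_{n-1}]\bigr)$, which contributes nothing to $E_\mu[\Phi^\varepsilon_n\mid\cF_{n-1}]$. So your construction yields $E_\mu[\Phi^\varepsilon_n\mid\cF_{n-1}]=\sum_j\1_{A_{n-1,j}}E_\mu[\Phi_n\mid A_{n-1,j}]$, a \emph{step function}. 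But $\Phi^\varepsilon_\#\mu\in\scrM(\cX)$ requires this to equal $\Phi^\varepsilon_{n-1}$, which is \emph{not} a step function—precisely because at step $n-1$ you added the injective perturbation $\delta_{n-1}g(x_{n-1})$. The martingale identity therefore fails, and no amount of tuning $\delta_n$ or refining the partitions repairs this: the discrepancy is structural, not quantitative.

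The paper resolves this cleanly by not trying to hit the conditional mean through the choice of representatives. Instead it subtracts an $\cF_{N-1}$-measurable residual
\[
    r(0,x_1,\dots,x_{N-1})=E_{\mu_{N-1}}\bigl[\proj^{\varepsilon}\circ\Phi_{N}+\varphi^{\varepsilon}(X_{N})\bigr]-\Phi^{\varepsilon}_{N-1},
\]
which \emph{by construction} makes $E_\mu[\Phi^\varepsilon_N\mid\cF_{N-1}]=\Phi^\varepsilon_{N-1}$ whatever $\Phi^\varepsilon_{N-1}$ turned out to be in the previous inductive step. Being $\cF_{N-1}$-measurable, $r$ is constant in $x_N$ and hence does not disturb the injectivity in the last coordinate. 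The sup-norm control on $r$ then comes for free from the original martingale identity $E_{\mu_{N-1}}[\Phi_N]=\Phi_{N-1}$, so this correction is small. This residual-correction idea is the missing ingredient in your argument; with it, you no longer need the hyperplane constraint, the "at least two charged subcells" requirement, or the conditional centring of $g$, all of which were attempts to compensate for the absence of $r$.

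Two smaller points worth noting. First, the paper uses a grid quantiser $\proj^\varepsilon$ together with a measurable bijection $\varphi^\varepsilon:\bbR^d\to(0,\varepsilon/4)^d$, which makes injectivity automatic with no gap/threshold tuning: a grid point plus a value in $(0,\varepsilon/4)^d$ uniquely determines both summands. Your gap condition $2\delta_n\|g\|_\infty<\min_{l\ne l'}|v_{n,j,l}-v_{n,j,l'}|$ is more fragile: if $\Phi_n$ is unbounded (nothing in the proposition rules this out) the partition is countably infinite and there is no uniform minimum gap, unless you also place the $v$'s on a fixed grid—at which point you have essentially recovered the paper's $\proj^\varepsilon$. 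Second, the paper's argument is an induction on $N$ where the inductive hypothesis packages both the injectivity and the martingale property up to step $N-1$; this is what makes the residual $r$ well-defined and small. Your "sequential and adaptive" choice of partitions is circling the same idea, but the induction and the residual correction are what actually close it.
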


\section{Some tools from stochastic calculus}
\label{sec-tools}
\subsection{Classical Malliavin derivative}
We follow \citet{nualart2006malliavin} to give a brief introduction to the (classical) Malliavin calculus. We then introduce the notion of \emph{pathwise} Malliavin derivative. It arises naturally as the limit of discrete objects, and we show it coincides with the classical version on the intersection of their domains. We believe such a pathwise approach to Malliavin calculus can unlock many interesting results. It links with the functional It\^o calculus of \cite{dupire2019functional,contfournie:13} and will be key to extending our sensitivity results in continuous time to reference measures beyond the Brownian case. We plan to pursue this direction of research in a future paper, see also Section \ref{sec:further}.

Let \(X\) be a \(d\)-dim Brownian motion on a filtered probability space \((\Omega,\cF,(\cF_{t})_{t\in I},P)\).
For a smooth cylindrical random variable \(F=f(\int_{0}^{T} \la h_{t}, \md X_{t}\ra)\) where \(f\in C^{1}_{b}\) and \(h\in L^{2}([0,T],\bbR^{d})\), its Malliavin derivative \(\MD F\) is given by
\begin{equation*}
    \MD_{t}F:= f'\lb(\int_{0}^{T} \la h_{s},\md X_{s}\ra \rb) h_{t} .
\end{equation*}
It is well-known that \(\MD\) is closable on \(L^{2}(P,\cF_{T}^{X})\).
Hence, we do not distinguish \(\MD\) from its closure and denote its domain by \(\MD^{1,2}\).
The predictable projection of the Malliavin derivative solves the martingale representation problem.
\begin{thm}[Clark--Ocone formula]
    Assume \(Z\in\MD^{1,2}\).
    Then we have
    \begin{equation*}
        Z=\E_{P}[Z]+\int_{0}^{T}\la \pp\MD_{t}Z,\md X_{t}\ra.
    \end{equation*}
\end{thm}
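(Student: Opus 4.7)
The plan is to combine the Brownian martingale representation theorem with the Malliavin integration by parts (duality) formula in order to identify the integrand. First I would note that since \(Z\in \MD^{1,2}\subseteq L^{2}(P,\cF_{T}^{X})\), the martingale representation theorem yields a unique predictable process \(\phi\in L^{2}(P\otimes\md t;\bbR^{d})\) with
\begin{equation*}
Z = \E_{P}[Z] + \int_{0}^{T}\la \phi_{t}, \md X_{t}\ra,
\end{equation*}
so the whole task reduces to showing that \(\phi = \pp\MD Z\) in \(L^{2}(P\otimes\md t;\bbR^{d})\).

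Next I would fix an arbitrary predictable \(u\in L^{2}(P\otimes\md t;\bbR^{d})\) and evaluate \(\E_{P}[Z\int_{0}^{T}\la u_{t},\md X_{t}\ra]\) in two ways. On the one hand, It\^o's isometry combined with the representation above yields \(\E_{P}[\int_{0}^{T}\la \phi_{t},u_{t}\ra\md t]\). On the other hand, the standard Malliavin duality relation
\begin{equation*}
\E_{P}\lb[F\int_{0}^{T}\la u_{t},\md X_{t}\ra\rb]=\E_{P}\lb[\int_{0}^{T}\la \MD_{t}F, u_{t}\ra\md t\rb],\quad F\in\MD^{1,2},
\end{equation*}
applied at \(F=Z\), gives \(\E_{P}[\int_{0}^{T}\la \MD_{t}Z,u_{t}\ra\md t]\). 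Since \(u\) is predictable, the defining property of the predictable projection lets one replace \(\MD_{t}Z\) by \(\pp\MD_{t}Z\) inside the expectation on the right-hand side. Equating the two expressions and letting \(u\) vary over a dense subset of predictable \(L^{2}\) processes forces \(\phi = \pp\MD Z\) almost everywhere, which is the Clark--Ocone identity.

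The main obstacle is ensuring that \(\pp\MD Z\) is a bona fide element of \(L^{2}(P\otimes\md t;\bbR^{d})\) so that the identification is meaningful: this rests on \(\MD Z\in L^{2}(P\otimes\md t;\bbR^{d})\) by the very definition of \(\MD^{1,2}\), together with the \(L^{2}\)-boundedness of the predictable projection. If a more hands-on route were preferred, one can first verify the identity on smooth cylindrical random variables \(F=f(\int_{0}^{T}\la h_{s},\md X_{s}\ra)\), by applying It\^o's formula to the martingale \(M_{t}=\E_{P}[F|\cF_{t}^{X}]=v(t,\int_{0}^{t}\la h_{s},\md X_{s}\ra)\) for \(v\) the associated heat-kernel solution, and then extend to all of \(\MD^{1,2}\) by density, invoking simultaneously the closedness of \(\MD\), the closedness of the It\^o integral on predictable \(L^{2}\) integrands, and the \(L^{2}\)-continuity of \(\pp\).
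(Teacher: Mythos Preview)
The paper does not supply its own proof of this statement: the Clark--Ocone formula is quoted as a background result in the brief review of Malliavin calculus taken from \citet{nualart2006malliavin}, with no argument given. Your proposal is the standard and correct proof---martingale representation to get a predictable integrand \(\phi\), then the Malliavin duality relation \(\E_{P}[F\,\delta(u)]=\E_{P}[\la \MD F,u\ra_{L^{2}}]\) together with the fact that \(\delta\) agrees with the It\^o integral on predictable \(u\) to identify \(\phi=\pp\MD Z\). There is nothing to compare against in the paper itself.

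One small point worth tightening: the paper's definition of the predictable projection is stated for processes \(A\) with \(\E_{P}[\|A\|_{\infty}]<\infty\), whereas \(\MD Z\) is a priori only in \(L^{2}(P\otimes\md t)\). In the Brownian filtration this is harmless---\(\cF_{t-}=\cF_{t}\), so \(\pp\) and \(\op\) coincide and one can simply work with \(t\mapsto\E_{P}[\MD_{t}Z\mid\cF_{t}]\) as a measurable version, which is in \(L^{2}(P\otimes\md t)\) by Jensen. Your remark on the ``main obstacle'' already anticipates this, so no real gap.
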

Malliavin derivative can be interpreted as a `gradient' operator on the `tangent space', the Cameron--Martin space \(W_{0}^{1,2}\).
\begin{prop}
    Assume \(f:\cX\to\bbR\) and \(f(X)\in \MD^{1,2}\).
    Then for any \(\eta\in W_{0}^{1,2}\), we have
    \begin{equation}\label{eq:classMD_char}
        \lim_{\varepsilon\to 0} \frac{f(X+\varepsilon \eta )-f(X)}{\varepsilon}= \int_0^T \la\MD_t f(X),\dot{\eta}\ra \md t \quad P\text{--a.s.}.
    \end{equation}
\end{prop}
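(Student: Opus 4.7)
The plan is to establish \eqref{eq:classMD_char} first for the dense class of smooth cylindrical random variables, then to extend to the general $f(X) \in \MD^{1,2}$ by an approximation argument based on the Cameron--Martin theorem.

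For the cylindrical case, I would take $F = g(W(h_1), \dots, W(h_n))$ with $g \in C_b^1(\bbR^n)$ and $h_i \in L^2([0,T]; \bbR^d)$, where $W(h) := \int_0^T \la h_s, \md X_s \ra$. Since $\eta \in W_0^{1,2}$ is absolutely continuous, a deterministic integration by parts yields
\begin{equation*}
W(h_i)(X + \varepsilon \eta) = W(h_i)(X) + \varepsilon \int_0^T \la h_i(s), \dot\eta_s \ra \md s,
\end{equation*}
so the chain rule applied to $g$ gives, pathwise,
\begin{equation*}
\lim_{\varepsilon \to 0} \frac{F(X+\varepsilon\eta) - F(X)}{\varepsilon} = \sum_{i=1}^n \partial_i g(W(h_1), \dots, W(h_n))\, \int_0^T \la h_i(s), \dot\eta_s\ra\md s = \int_0^T \la \MD_s F, \dot\eta_s \ra \md s,
\end{equation*}
which matches the definition of $\MD F$ for smooth cylindrical functionals.

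For general $f(X) \in \MD^{1,2}$ I would pick smooth cylindrical $F_n$ with $F_n \to f(X)$ and $\MD F_n \to \MD f(X)$ in $L^2(P)$. The right-hand side of \eqref{eq:classMD_char} then converges in $L^2(P)$ by Cauchy--Schwarz. For the left-hand side I would invoke Cameron--Martin: the law of $X + \varepsilon \eta$ under $P$ coincides with the law of $X$ under the equivalent measure $P_\varepsilon$ with density $\exp\lb(\varepsilon\int_0^T \la \dot\eta_s, \md X_s\ra - \tfrac{\varepsilon^2}{2}\|\dot\eta\|_{L^2}^2\rb)$; combined with closability of $\MD$ this transfers the cylindrical identity to the limit.

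The main obstacle is upgrading from $L^2$- or in-probability convergence to the pointwise a.s.\ statement. This is precisely where the Cameron--Martin structure is essential: the shift $X \mapsto X + \varepsilon \eta$ preserves $P$-null sets, so a subsequence (or quasi-sure refinement) argument transfers the cylindrical identity pathwise to general $f(X)\in\MD^{1,2}$. I would note that the pathwise Malliavin derivative introduced in the next subsection offers a more direct route that bypasses these capacity considerations, which is one of the motivations for its introduction.
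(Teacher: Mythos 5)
Your cylindrical-case computation is correct. The paper, however, does not actually prove this proposition: it is quoted as background from \cite{nualart2006malliavin}. The real question is whether your extension to general $f(X)\in\MD^{1,2}$ can be carried out, and it cannot---at least not in the $P$-a.s.\ form stated---and you have in fact put your finger on the weak link yourself. Cameron--Martin quasi-invariance only controls each fixed $\varepsilon$: a $P$-null modification of $f$ alters $f(X+\varepsilon\eta)$ on a $P$-null set for each single $\varepsilon$, and a subsequence extraction only yields a.s.\ convergence along some $\varepsilon_n\to 0$. Neither argument gives the full limit in \eqref{eq:classMD_char}, which sweeps over an uncountable set of $\varepsilon$. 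Concretely, take $d=1$, $\eta_T\neq 0$, and $f = f_0 + \1_B$ with $f_0$ smooth and $B=\{\omega:\omega_T/\eta_T\in\bbQ\}$. Then $f(X)=f_0(X)$ $P$-a.s., so $f(X)\in\MD^{1,2}$, but for $P$-a.e.\ $\omega$ the map $\varepsilon\mapsto f(X(\omega)+\varepsilon\eta)$ has a dense set of unit jumps in $\varepsilon$, so the a.s.\ difference quotient in \eqref{eq:classMD_char} does not converge.

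The provable statement under only the hypothesis $f(X)\in\MD^{1,2}$ (Sugita; Bouleau--Hirsch) is that membership of $\MD^{1,2}$ is equivalent to ray absolute continuity together with \emph{stochastic} Gateaux differentiability, i.e.\ the difference quotient converges to $\int_0^T\la\MD_t f(X),\dot\eta_t\ra\md t$ in probability, not a.s.\ An a.s.\ conclusion requires additional pathwise regularity on the chosen Borel representative $f$---precisely the sort of hypothesis the paper imposes through $\mD^1_b$ in Definition~\ref{def:Db1}. Indeed, the one place \eqref{eq:classMD_char} is invoked, the proof of Proposition~\ref{prop:pathMDisclassicalMD}, also assumes $f\in\mD^1_b$, and under that hypothesis \eqref{eq:pathMD_char} already delivers an a.s.\ limit which must then coincide with the in-probability Malliavin limit. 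So the proposition should be read either as an in-probability statement or with an extra regularity hypothesis on $f$; the \emph{quasi-sure refinement} you gesture at is the genuinely missing ingredient and cannot be supplied for general $f(X)\in\MD^{1,2}$.
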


\subsection{Pathwise Malliavin derivative}
To introduce pathwise Malliavin derivative, we start with the discrete-time setup. Recall that \(\Delta:\cX\to\cX\) is the increment map given by
\begin{equation*}
    \Delta x= (0,\Delta x_{1}, \dots, \Delta x_{N}) := (0,x_{1},x_{2}-x_{1},\dots,x_{N}-x_{N-1}).
\end{equation*}
Then the \emph{pathwise} Malliavin derivative \(\mD=(\mD_{1},\dots,\mD_{N})\) is defined as the pullback of \(\nabla=(\partial_{1},\dots,\partial_{N})\) under \(\Delta\), i.e., for any smooth \(f\)
\begin{equation}\label{eq:def_discrete_MD_1}
    \la \mD f (x), y\ra :=\la \nabla f (x), \Delta^{-1} y \ra=\la (\Delta^{-1})^{\star}\nabla f (x),  y \ra,
\end{equation}
where \((\Delta^{-1})^{\star}\) is the adjoint operator of \(\Delta^{-1}\). Expressed explicitly, this gives
\begin{equation}\label{eq:def_discrete_MD}
    \mD_{n} = \sum_{k=n}^{N}\partial_{k},\quad 1\leq n\leq N.
\end{equation}
Here, \(\bbD f\) is an analogue of Malliavin derivative in the sense that for any deterministic path \(x,y\in\cX\)
\begin{equation*}
    \lim_{\varepsilon\to 0} \frac{f(x+\varepsilon y )-f(x)}{\varepsilon}= \la\bbD f(x),\Delta y\ra.
\end{equation*}

In continuous-time, while we focus on stochastic processes with continuous path, it is natural to first define \emph{pathwise} Malliavin derivative for functionals on the c\`adl\`ag path space \(D([0,T];\bbR^{d})\).
This approach closely aligns with the functional It\^o calculus from \citet{contfournie:13}.
Proofs of results in this section are deferred to Section \ref{sec-aux}.
\begin{defn}[Pathwise Malliavin derivative]
    Let \(f:D([0,T];\bbR^{d})\to \bbR\) be a functional on the c\`adl\`ag path space. We say that \(f\) is \emph{pathwise Malliavin differentiable} if there exists \(\mD f:D([0,T];\bbR^{d})\times [0,T] \to \bbR^d\) such that
    \begin{equation*}
        \la\mD_{t} f(\omega), e \ra = \lim_{\varepsilon\to 0} \frac{f(\omega+ \varepsilon e \1_{[t,T]})-f(\omega)}{\varepsilon},\quad \forall   \omega\in D([0,T];\bbR^{d}),\ e\in \bbR^{d}.
    \end{equation*}
    We call \(\mD f=(\mD_{t} f)_{t\in[0,T]}\) the \textit{pathwise Malliavin derivative} of \(f\).
\end{defn}

\begin{defn}[Left limit]
    For a functional \(F:[0,T]\times D([0,T];\bbR^{d})\to\bbR\), we say \(F\) has a left limit at \(t\in[0,T]\) if for any \(\omega^{n}\) converging uniformly to \(\omega\) and \(t^{n}\) increasing to \(t\) the following limit exists
    \[\lim_{n\to\infty}F(t^{n},\omega^{n})=F(t-,\omega).\]
\end{defn}

\begin{defn}[Boundness preservation]
    We say a functional \(F:[0,T]\times D([0,T];\bbR^{d})\to\bbR\) is boundedness preserving if for any \(K\)
    \[\sup_{t\in[0,T]}\sup_{\|\omega\|_{\infty}<K}|F(t,\omega)|<\infty.\]
\end{defn}

\begin{prop}
    \label{prop-derivative}
    Assume   \(f\) and \(\mD_{t}f\), \(t\in I\), are all continuous with respect to the uniform topology.
    Then, for any simple step function  \(\eta=\sum_{k=1}^{n} e_{k}\1_{[t_{k},1]}\), we have
    \begin{equation*}
        \lim_{\varepsilon\to 0}\frac{f(\omega+\varepsilon \eta)-f(\omega)}{\varepsilon}= \sum_{k=1}^{n} \la\mD_{t_{k}} f(\omega), e_{k} \ra.
    \end{equation*}
    If further \(\mD f\) has a left limit for all \(t\in [0,T]\) and is boundedness preserving then, for any path \(\eta\in AC_{0}([0,T];\bbR^{d})\), we have
    \begin{equation}\label{eq:pathMD_char}
        \lim_{\varepsilon\to 0}\frac{f(\omega+\varepsilon \eta)-f(\omega)}{\varepsilon}= \int_{0}^{T}\la\mD_{t} f(\omega), \dot{\eta}_{t} \ra\md t.
    \end{equation}
\end{prop}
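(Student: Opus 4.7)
My plan is to handle the two parts separately, the first by reducing to finite-dimensional calculus and the second by a step-function approximation combined with a fundamental theorem of calculus argument.

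For part 1, I would parametrize the perturbations finite-dimensionally by $\Phi(\alpha_1,\ldots,\alpha_n):=\omega+\sum_{k=1}^n \alpha_k e_k \1_{[t_k,T]}$ and set $g:=f\circ \Phi:\bbR^n\to\bbR$. Direct application of the definition of $\mD_{t_k}f$ at the shifted base point $\Phi(\alpha)$ shows that all partial derivatives exist with
\[
\partial_{\alpha_k} g(\alpha) = \langle \mD_{t_k} f(\Phi(\alpha)), e_k \rangle.
\]
Continuity of $\mD_{t_k} f$ in the uniform topology, composed with the continuous linear map $\Phi$, makes these partial derivatives continuous on $\bbR^n$, so $g \in C^1(\bbR^n)$. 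The chain rule applied to $\varepsilon\mapsto g(\varepsilon,\ldots,\varepsilon)$ at $\varepsilon=0$ then delivers the claimed identity.

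For part 2, I would approximate $\eta\in AC_0$ by left-endpoint step functions $\eta^n=\sum_k (\eta(t_k^n)-\eta(t_{k-1}^n))\1_{[t_{k-1}^n,T]}$ built on partitions $0=t_0^n<\dots<t_{m_n}^n=T$ with vanishing mesh; continuity of $\eta$ gives $\eta^n\to\eta$ uniformly. Setting $G(\varepsilon):=f(\omega+\varepsilon\eta)$ and $G_n(\varepsilon):=f(\omega+\varepsilon\eta^n)$, part 1 applied at each base point shows that $G_n\in C^1(\bbR)$ with derivative
\[
G_n'(\varepsilon)=\int_0^T \langle \mD_{\phi^n(s)} f(\omega+\varepsilon\eta^n),\dot\eta_s\rangle \md s,
\]
where $\phi^n(s)$ denotes the left endpoint of the partition interval containing $s$, so that $\phi^n(s)\uparrow s$ as the mesh tends to zero. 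The classical FTC then gives $G_n(\varepsilon)-G_n(0)=\int_0^\varepsilon G_n'(r)\md r$. Continuity of $f$ plus $\eta^n\to\eta$ uniformly yield $G_n(\varepsilon)\to G(\varepsilon)$ pointwise, and a double application of dominated convergence (once inside the $t$-integral defining $G_n'(r)$, once for the $r$-integral) produces $G(\varepsilon)-G(0)=\int_0^\varepsilon H(r)\md r$ with $H(r):=\int_0^T \langle \mD_t f(\omega+r\eta),\dot\eta_t\rangle \md t$. Continuity of $H$ at $r=0$ (again via dominated convergence) then yields $G'(0)=H(0)$, which is the claim.

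The main obstacle is justifying these dominated convergence arguments, and this is precisely where the two extra hypotheses enter. Boundedness-preservation furnishes a deterministic dominating constant valid uniformly over the compact family $\{\omega+\varepsilon\eta^n:n\in\bbN,\,|\varepsilon|\leq\varepsilon_0\}$ of perturbed paths, while the left-limit property combined with the assumed $\omega$-continuity of $\mD_tf$ secures the pointwise convergence $\mD_{\phi^n(s)} f(\omega+\varepsilon\eta^n)\to \mD_{s-}f(\omega+\varepsilon\eta)$ jointly as $n\to\infty$; the distinction between $\mD_sf$ and $\mD_{s-}f$ is immaterial under the Lebesgue integral since any function with left limits everywhere on $[0,T]$ has at most countably many discontinuities.
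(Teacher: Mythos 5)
Your proof is correct and takes essentially the same approach as the paper: reduce the step-function case to finite-dimensional calculus, approximate an absolutely continuous perturbation by step functions, and pass to the limit via the fundamental theorem of calculus and dominated convergence, with boundedness preservation supplying the dominating function and the left-limit property supplying pointwise a.e.\ convergence. The paper organises the argument around the identity \(f(\omega+\eta)-f(\omega)=\int_0^1\int_0^T\langle\mD_t f(\omega+\lambda\eta),\dot\eta_t\rangle\,\md t\,\md\lambda\) rather than the map \(\varepsilon\mapsto f(\omega+\varepsilon\eta)\), but this is only a cosmetic difference, and your explicit justification that a function with left limits everywhere has at most countably many left-discontinuities fills in a detail the paper leaves implicit.
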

We note that \eqref{eq:pathMD_char} fully characterizes (up to a Lebesgue null set) the left limit of the Malliavin derivative.
This, in particular, offers us a natural way to define the pathwise Malliavin derivative for a functional defined on the continuous path space \(f:C_{0}([0,T];\bbR^{d})\to\bbR\).

\begin{defn}\label{def:Db1}
    We denote \(\mD^{1}_{b}\) the space of functionals \(f:C_{0}([0,T];\bbR^{d})\to\bbR\) which admit an extension \(\tilde{f}\) to \(D([0,T];\bbR^{d})\) satisfying assumptions in Proposition \ref{prop-derivative}, and for such $f$ we set
    \begin{equation*}
        \mD_{t}f(\omega) :=\mD_{t}\tilde{f}(\omega),\quad t\in [0,T],\ \omega\in C_{0}([0,T];\bbR^{d}).
    \end{equation*}
\end{defn}
We remark that \(\mD f\) is measurable and does not depend on the choice of the extension \(\tilde{f}\) as the right-hand side only depends on the values of \(\tilde{f}\) on the continuous path space.

\begin{exam}
    We give a few examples of functionals \(f\in\mD^{1}_{b}\).
    \begin{enumerate}
        \item \(f(\omega)=g(\omega_{t_{1}},\dots,\omega_{t_{n}})\), where \(g\in C^{1}_{b}\).
              This gives \(\mD_{t}f(\omega)=\sum_{k=1}^{n}  \partial_{k}g(\omega_{t_{1}},\dots,\omega_{t_{n}})\1_{[0,t_{k}]}\).
        \item \(f(\omega)=\int_{0}^{T}g(\omega_{t})\theta(\md t)\) where \(g\in C^{1}_{b}\) and \(\theta\) is a finite measure.
              This gives \(\mD_{t}f(\omega)=\int_{t}^{T}\nabla g (\omega_{s})\theta(\md s)\).
    \end{enumerate}
\end{exam}

The following property shows that the classical and the \emph{pathwise} Malliavin derivatives agree on their common domain.
\begin{prop}\label{prop:pathMDisclassicalMD}
    Let \(\cX=C_{0}([0,T];\bbR^{d})\), \(\mu\) the Wiener measure, and \(X\) be the canonical process.
    Assume \(f\in\mD^{1}_{b}\) and \(f(X)\in\MD^{1,2}\).
    Then, we have
    \begin{equation*}
        (\mD_{t}f) (X)=\MD_{t} f(X)  \quad  \md \mu\otimes\md t\text{--a.e.}.
    \end{equation*}
\end{prop}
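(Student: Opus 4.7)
The plan is to test both derivatives against the same Cameron--Martin perturbations, where each of them characterizes the directional derivative of \(\varepsilon\mapsto f(X+\varepsilon\eta)\) at \(\varepsilon=0\). Concretely, for any \(\eta\in W_{0}^{1,2}\subset AC_{0}([0,T];\bbR^{d})\), both \eqref{eq:classMD_char} (applicable because \(f(X)\in\MD^{1,2}\)) and \eqref{eq:pathMD_char} (applicable because \(f\in\mD^{1}_{b}\), via the c\`adl\`ag extension from Definition \ref{def:Db1}, noting that \(\omega+\varepsilon\eta\) remains in \(C_{0}\) so the extension and \(f\) agree along the difference quotient) compute the same real-valued limit. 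Hence on a \(\mu\)-full measure set
\begin{equation*}
    \int_{0}^{T}\la \mD_{t}f(X)-\MD_{t}f(X),\dot{\eta}_{t}\ra\md t=0.
\end{equation*}

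I would then fix a countable family \(\{g_{n}\}_{n\geq 1}\subset C_{c}^{\infty}((0,T);\bbR^{d})\) dense in \(L^{2}([0,T];\bbR^{d})\) and set \(\eta_{n}(t):=\int_{0}^{t}g_{n}(s)\md s\in W_{0}^{1,2}\). Intersecting the countably many \(\mu\)-null exceptional sets yields a \(\mu\)-full measure set \(\Omega_{0}\) on which the display above holds simultaneously for every \(n\), namely
\begin{equation*}
    \int_{0}^{T}\la \mD_{t}f(X(\omega))-\MD_{t}f(X(\omega)),g_{n}(t)\ra\md t=0,\qquad \forall n\geq 1,\ \forall \omega\in\Omega_{0}.
\end{equation*}

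To conclude, I would upgrade this to an \(\md\mu\otimes\md t\)-a.e. equality via the fundamental lemma of the calculus of variations. Boundedness preservation in the definition of \(\mD^{1}_{b}\), together with the fact that Brownian paths are \(\mu\)-a.s. continuous and hence uniformly bounded on \([0,T]\), gives \(t\mapsto\mD_{t}f(X(\omega))\in L^{\infty}([0,T];\bbR^{d})\) for \(\mu\)-a.e. \(\omega\); meanwhile \(f(X)\in\MD^{1,2}\) combined with Fubini gives \(\MD_{\cdot}f(X(\omega))\in L^{2}([0,T];\bbR^{d})\) for \(\mu\)-a.e. \(\omega\). The difference therefore lies in \(L^{1}([0,T];\bbR^{d})\) pathwise on (a shrinking of) \(\Omega_{0}\), so the vanishing of its pairings against the dense family \(\{g_{n}\}\) forces \(\mD_{t}f(X(\omega))=\MD_{t}f(X(\omega))\) for Lebesgue-a.e. \(t\in[0,T]\); another application of Fubini delivers the claim. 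The only delicate point is precisely this final upgrade: without the pathwise local integrability supplied by boundedness preservation and \(\MD^{1,2}\)-membership, the two derivatives could a priori agree when paired with every test direction yet differ on a non-negligible set of \((\omega,t)\), so both hypotheses are genuinely used.
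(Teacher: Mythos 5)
Your proof is correct and follows exactly the same route as the paper: test both derivatives against Cameron--Martin directions via \eqref{eq:classMD_char} and \eqref{eq:pathMD_char}, then upgrade to an $\md\mu\otimes\md t$-a.e.\ identity. The paper leaves the final upgrade implicit in a single sentence, whereas you correctly spell out the countable dense family, the intersection of null sets, and the pathwise integrability (boundedness preservation for $\mD$, Fubini and $\MD^{1,2}$-membership for $\MD$) needed to invoke the fundamental lemma of the calculus of variations.
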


\subsection{Forward integral against regular martingales}
\label{sec-Skoro}
Let \(X\) be a \(d\)-dim Brownian motion on a filtered probability space \((\Omega,\cF,(\cF_{t})_{t\in I},P)\).
The forward integration was introduced as an anticipative extension to the It\^o integral, and it plays a crucial role in our sensitivity analysis.
We first recall its definition.
\begin{defn}[\citet{russo1993forward}]
    Let  \(A,B\) be two measurable stochastic processes (not necessary adapted).
    The forward integral is given by
    \begin{equation*}
        \int_{0}^{T}\la A_{t},\md^{-}B_{t}\ra=\lim_{\varepsilon\to 0}\int_{0}^{T}\frac{1}{\varepsilon}\la A_{t},B_{(t+\varepsilon)\wedge T}-B_{t}\ra\md t,
    \end{equation*}
    if the right hand side limit exists in probability.
    We say \(A\) is \(B\)  forward--\(\gamma\) integrable if the above limit exists in \(L^{\gamma}\).
    A family \(\{A_{\lambda}\}_{\lambda\in \Lambda}\) of \(B\)  forward--\(\gamma\) integrable processes is said to be \(B\) uniformly forward--\(\gamma\) integrable if
    \begin{equation*}
        \sup_{\lambda\in \Lambda} \lb|\int_{0}^{T}\la A_{t,\lambda}, \md^{-} B_{t}\ra -\int_{0}^{T}\frac{1}{\varepsilon}\la A_{t,\lambda},B_{(t+\varepsilon)\wedge T}-B_{t}\ra\md t\rb|
    \end{equation*}
    converges to 0 in \(L^{\gamma}\) as \(\varepsilon\) goes to 0.
\end{defn}

We focus on the case where the integrator of the forward integral is a regular martingale.
Unlike It\^o integral, the expectation of a forward integral can be nonzero as a consequence of the anticipative nature of the integrand.
\begin{prop}
    \label{prop-fw}
    Let \(M\) be a regular martingale.
    We assume \(\Phi_{t}\in \cF_{T}^{X}\) for any \(t\in I\), and \(\Phi\) is \(M\) forward--1 integrable.
    If the right limit \(\lim_{s\to t+}\MD_{s}\Phi_{t}=\MD_{t+}\Phi_{t}\) converges in \(L^{2}\) and \(\sup_{s,t\in[0,T]}\|\MD_{s}\Phi_{t}\|_{\bF}\in~L^{2}\),
    then we have
    \begin{equation*}
        \E_{P}\lb[\int_{0}^{T}\la \Phi_{t},\md^{-}M_{t}\ra\rb]=\E_{P}\lb[\int_{0}^{T}\la \MD_{t+}\Phi_{t},\md \lt X, M \rt_{t}\ra_{\bF}\rb],
    \end{equation*}
\end{prop}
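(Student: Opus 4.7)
The plan is to start from the definition of the forward integral and use the $L^1$-convergence supplied by the forward-$1$ integrability hypothesis. This will let me exchange expectation and limit, and Fubini (justified below by $\sup_{s,t}\|\MD_s\Phi_t\|_{\bF}\in L^2(P)$ and $\Gamma\in L^2(P\otimes\md t)$) will bring the expectation inside the time integral, reducing the task to identifying
\begin{equation*}
\lim_{\varepsilon\downarrow 0}\int_0^T \frac{1}{\varepsilon}\,\E_P[\la \Phi_t, M_{(t+\varepsilon)\wedge T} - M_t\ra]\, \md t.
\end{equation*}

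The key computational step will be to apply the Clark--Ocone formula component-wise to $\Phi_t\in\cF_T^X$, writing $\Phi_t = \E_P[\Phi_t] + \int_0^T \la \pp\MD_s \Phi_t, \md X_s\ra$; the constant part vanishes against the mean-zero increment of $M$. Representing $M^i_{(t+\varepsilon)\wedge T} - M^i_t = \int_0^T \1_{(t,(t+\varepsilon)\wedge T]}(s)\,\md M^i_s$ and using the mixed-integrand Kunita--Watanabe identity $\E_P[(\int A\,\md X^j)(\int B\,\md M^i)] = \E_P[\int AB\,\md \lt X^j, M^i\rt]$ for predictable $A,B$, summed componentwise, I will obtain
\begin{equation*}
\E_P[\la \Phi_t, M_{(t+\varepsilon)\wedge T} - M_t\ra] = \E_P\Bigl[\int_t^{(t+\varepsilon)\wedge T} \la \pp\MD_s\Phi_t,\, \md \lt X, M\rt_s\ra_{\bF}\Bigr].
\end{equation*}
Because $M$ is regular, Kunita--Watanabe provides a predictable matrix density $\md \lt X, M\rt_s = \Gamma_s\, \md s$ with $\int_0^T\|\Gamma_s\|_{\bF}^2\,\md s \leq d\cdot [M]_T$, hence $\Gamma\in L^2(P\otimes\md t)$. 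The defining property of the predictable projection against the predictable measure $\Gamma_s\,\md s$ then lets me replace $\pp\MD_s\Phi_t$ by $\MD_s\Phi_t$ inside the expectation.

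To pass to the limit, I would split $\MD_s\Phi_t = \MD_{t+}\Phi_t + (\MD_s\Phi_t - \MD_{t+}\Phi_t)$. Cauchy--Schwarz in $\Omega$ and then in $s$ shows the residual contributes $o(1)$ for a.e.\ $t$, using the $L^2$ right-limit hypothesis together with Lebesgue differentiation of $s\mapsto\|\Gamma_s\|_{L^2(P)}^2\in L^1([0,T])$. The principal term $\E_P[\la \MD_{t+}\Phi_t,\,\tfrac{1}{\varepsilon}\int_t^{t+\varepsilon}\Gamma_s\,\md s\ra_{\bF}]$ will converge to $\E_P[\la \MD_{t+}\Phi_t, \Gamma_t\ra_{\bF}]$ by the Bochner--Lebesgue differentiation theorem for $s\mapsto\Gamma_s\in L^2(P)$, combined with $\MD_{t+}\Phi_t\in L^2(P)$. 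Dominated convergence on the outer $t$-integral will follow from bounding the integrand uniformly in $\varepsilon$ by $\bigl\|\sup_{s,t}\|\MD_s\Phi_t\|_{\bF}\bigr\|_{L^2(P)}$ times the Hardy--Littlewood maximal function of $t\mapsto\|\Gamma_t\|_{L^2(P)}$, which lies in $L^2([0,T])$. I expect the main obstacle to be the cross-isometry in the second paragraph --- handling stochastic integrals against two distinct martingales ($X$ Brownian, $M$ only assumed regular) and transferring from $\pp\MD_s\Phi_t$ back to $\MD_s\Phi_t$ via the predictable density --- both of which hinge on the absolute continuity of $\lt X, M\rt$ induced by regularity of $M$; the subsequent limit passages are routine $L^2$-Lebesgue differentiation bookkeeping.
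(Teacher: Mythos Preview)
Your proposal is correct and follows essentially the same approach as the paper: pass to the limit in $L^1$ via forward-1 integrability, apply Clark--Ocone to $\Phi_t$, use the It\^o/Kunita--Watanabe cross-isometry to convert the product of stochastic integrals into an integral against $\md\lt X,M\rt_s$, and then pass $\varepsilon\to 0$ by dominated convergence using $\sup_{s,t}\|\MD_s\Phi_t\|_{\bF}\in L^2$. The only cosmetic difference is that the paper keeps the predictable projection $\pp\MD_s\Phi_t$ throughout and removes it at the very end by noting that $\lt X,M\rt$ is predictable, whereas you remove it earlier via the predictable density $\Gamma_s\,\md s$; your more explicit treatment of the limit (splitting off $\MD_{t+}\Phi_t$, invoking Bochner--Lebesgue differentiation, and dominating via the Hardy--Littlewood maximal function) fills in details the paper leaves implicit.
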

\begin{proof}
    By definition of the forward-1 integrability and \(L^1\) convergence, we have
    \begin{align*}
        \E_{P}\lb[\int_{0}^{T}\la \Phi_{t},\md^{-}M_{t}\ra\rb]=\lim_{\varepsilon\to 0}\E_{P}\lb[\int_{0}^{T}\frac{1}{\varepsilon}\la \Phi_{t},M_{(t+\varepsilon)\wedge 1}-M_{t}\ra\md t\rb].
    \end{align*}
    By applying Clark--Ocone formula to \(\Phi_{t}\), we derive
    \begin{align*}
        \E_{P}\lb[\int_{0}^{T}\la \Phi_{t},\md^{-}M_{t}\ra\rb] & =\lim_{\varepsilon\to 0}\E_{P}\lb[\int_{0}^{T}\frac{1}{\varepsilon}\lb\la\int_{0}^{T} \pp\MD_{s}^{\intercal}\Phi_{t}\md X_{s},\int_{t}^{(t+\varepsilon)\wedge 1}\md M_{s}\rb\ra \md t\rb] \\
                                                               & = \lim_{\varepsilon\to 0} \E_{P} \lb[\int_{0}^{T}\frac{1}{\varepsilon}\int_{t}^{(t+\varepsilon)\wedge 1}\la\pp \MD_{s}\Phi_{t},\md\lt X, M \rt_{s} \ra_{\bF}\md t\rb]                     \\
                                                               & =\E_{P}\lb[\int_{0}^{T}\la \pp\MD_{t+}\Phi_{t},\md \lt X, M \rt_{t}\ra_{\bF}\rb],
    \end{align*}
    where  the second line follows from the It\^o isometry and Fubini theorem, and the last line follows from the dominated convergence theorem  and \(\sup_{s,t\in[0,T]}\|\MD_{s}\Phi_{t}\|_{\bF}\in L^{2}\).
    We conclude the proof by noticing \(\lt X,M \rt\) is a predictable process.
\end{proof}

We give now a stochastic Fubini theorem which details the correction term arising from an  interchange between a forward and an It\^o integral. To the best of our knowledge, this is a novel result which we believe is of independent interest. Its proof is deferred to Section \ref{sec-aux}.
\begin{thm}[Stochastic Fubini theorem]
    \label{thm-fubini}
    Let \(\gamma\in[1,2)\).
    Let \(\Psi:I\times I\times \Omega\to \bbR^{d\times d}\), \(M\) be a regular martingale.
    We assume that \(\Psi_{s,\cdot}\) is predictable for any \(s\in I\) and satisfies the following conditions:
    \begin{enumerate}
        \item \(\{\Psi_{\cdot,t}\}_{t\in[0,T]}\) is \(M_{\cdot}\) uniformly forward--\(\gamma\) integrable,
        \item    \(
              \sup_{s,t\in[0,T]}\|\Psi_{s,t}\|_{\bF}\in L^{2\gamma/(2-\gamma)},
              \)
        \item \(\lim_{t\to s+}\E_{P}[\|\Psi_{s,t}-\Psi_{t,t}\|_{\bF}^{2}]=0\) for any \(s\in[0,T]\).
    \end{enumerate}

    Then, we have \(\int_{\cdot}^{T} \Psi_{\cdot,t}^{\intercal}\md X_{t}\) is \(M_{\cdot}\) forward--\(\gamma\) integrable.
    Moreover,
    \begin{equation*}
        \int_{0}^{T}\lb\la\int_{s}^{T} \Psi_{s,t}^{\intercal}\md X_{t},\md^{-} M_{s}\rb\ra=   \int_{0}^{T}\lb\la\int_{0}^{t} \Psi_{s,t} \md^{-} M_{s}, \md X_{t}\rb\ra + \int_{0}^{T}\lb\la \Psi_{t,t} ,\md \lt X, M\rt_{t}\rb\ra_{\bF} .
    \end{equation*}

\end{thm}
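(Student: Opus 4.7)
The idea is to unravel the forward integral on the left-hand side via its defining Riemann-type limit, split the inner It\^o integral into a \emph{near} piece over $[s,(s+\varepsilon)\wedge T]$ and a \emph{far} piece over $[(s+\varepsilon)\wedge T,T]$, and then show that the far piece produces the iterated integral on the right-hand side while the near piece generates the covariation correction.

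Concretely, I would first write
\begin{equation*}
\mathrm{LHS}=\lim_{\varepsilon\to 0}\int_0^T\frac{1}{\varepsilon}\lb\la \int_s^T\Psi_{s,t}^{\intercal}\md X_t,\,M_{(s+\varepsilon)\wedge T}-M_s\rb\ra\md s=\lim_{\varepsilon\to 0}\lb(J^{(1)}_\varepsilon+J^{(2)}_\varepsilon\rb),
\end{equation*}
splitting $\int_s^T=\int_s^{(s+\varepsilon)\wedge T}+\int_{(s+\varepsilon)\wedge T}^T$. For $J^{(2)}_\varepsilon$ the integrand is predictable in $t$ and condition~(ii) supplies the required integrability, so the classical adapted stochastic Fubini theorem gives
\begin{equation*}
J^{(2)}_\varepsilon=\int_0^T\lb\la \int_0^{(t-\varepsilon)^+}\frac{1}{\varepsilon}\Psi_{s,t}\,(M_{(s+\varepsilon)\wedge T}-M_s)\md s,\,\md X_t\rb\ra.
\end{equation*}
The uniform forward-$\gamma$ integrability~(i), applied to the family $\{\Psi_{\cdot,t}\1_{[0,t]}\}_{t\in[0,T]}$, identifies the $L^\gamma$-limit of the inner $\md s$-integral with $\int_0^t\Psi_{s,t}\md^{-}M_s$ uniformly in $t$, and the It\^o isometry together with the uniform bound~(ii) lets me pass this limit through the outer It\^o integral, yielding the first term on the right-hand side of the claimed identity.

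For $J^{(1)}_\varepsilon$ I would first replace $\Psi_{s,t}$ by $\Psi_{t,t}$ inside the inner stochastic integral. The resulting error is controlled in $L^\gamma$ by H\"older's inequality combined with the Burkholder--Davis--Gundy inequality applied to $\int_s^{(s+\varepsilon)\wedge T}(\Psi_{s,t}-\Psi_{t,t})^{\intercal}\md X_t$, using the mean-square diagonal continuity~(iii) to kill $\|\Psi_{s,t}-\Psi_{t,t}\|_{\bF}$ in the limit, and the exponent $2\gamma/(2-\gamma)$ in~(ii) precisely to balance the $L^2$-norm of the martingale increment against $L^\gamma$. Introducing the progressive process $N_t:=\int_0^t\Psi_{u,u}^{\intercal}\md X_u$, this reduces the task to proving
\begin{equation*}
\int_0^T\frac{1}{\varepsilon}\la N_{(s+\varepsilon)\wedge T}-N_s,\,M_{(s+\varepsilon)\wedge T}-M_s\ra\md s\xrightarrow[\varepsilon\to 0]{L^\gamma}\lt N,M\rt_T,
\end{equation*}
which is a standard Riemann-type approximation of the quadratic covariation of two continuous square-integrable martingales, obtained by polarisation and the Kunita--Watanabe inequality together with the absolute continuity of $\lt X,M\rt$. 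The identity $\lt N,M\rt_T=\int_0^T\la\Psi_{t,t},\md\lt X,M\rt_t\ra_{\bF}$ follows by a coordinate-wise computation since $\md N^j_t=\sum_k\Psi_{t,t}^{kj}\md X^k_t$, producing the correction term.

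I expect the main obstacle to lie in the treatment of $J^{(1)}_\varepsilon$, where all three hypotheses interact: condition~(iii) controls the diagonal approximation error $\Psi_{s,t}\rightsquigarrow\Psi_{t,t}$, condition~(ii) supplies the moment bound needed to absorb the martingale increment via H\"older's inequality, and their combination ensures that $\Psi_{u,u}$ is a bona fide It\^o integrand, so that $N$ is well defined. Once the two limits are identified in $L^\gamma$, the $M$ forward-$\gamma$ integrability of $\int_\cdot^T\Psi_{\cdot,t}^{\intercal}\md X_t$ falls out as an immediate by-product of the $L^\gamma$-convergence of $J^{(1)}_\varepsilon+J^{(2)}_\varepsilon$.
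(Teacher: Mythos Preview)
Your plan is essentially the paper's proof: split the inner It\^o integral into a near piece $[s,(s+\varepsilon)\wedge T]$ and a far piece, replace $\Psi_{s,t}$ by $\Psi_{t,t}$ on the near piece to generate the covariation via the auxiliary martingale $N_t=\int_0^t\Psi_{u,u}^\intercal\md X_u$, and apply adapted stochastic Fubini on the far piece to obtain the iterated integral. The paper organises this into four terms $J_1,\dots,J_4$ rather than two, but the content is the same.

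Two points where your sketch is quicker than the paper and would need to be filled in. First, in your treatment of $J^{(2)}_\varepsilon$ you invoke condition~(i) for the truncated family $\{\Psi_{\cdot,t}\1_{[0,t]}\}$, but the hypothesis is stated only for $\Psi_{\cdot,t}$ on $[0,T]$; moreover the Fubini step produces an inner $\md s$-integral over $[0,(t-\varepsilon)^+]$, not $[0,t]$. The paper handles this by rewriting the far piece so that the inner integral ranges over all of $[s,T]$ (their $J_3$) and then separately proving that the leftover boundary term over $s\in[(t-\varepsilon)^+,t]$ (their $J_4$) vanishes in $L^\gamma$, using BDG and condition~(ii). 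You would need an equivalent argument. Second, the sliding-average identity
\[
\int_0^T\tfrac{1}{\varepsilon}\la N_{(s+\varepsilon)\wedge T}-N_s,\,M_{(s+\varepsilon)\wedge T}-M_s\ra\md s\xrightarrow{L^\gamma}\lt N,M\rt_T
\]
is not the textbook discrete-partition approximation of the bracket, and ``polarisation plus Kunita--Watanabe'' does not settle it directly. The paper proves it by expanding the product as $\la N_T,M_T\ra$ minus two forward integrals $\int_0^T\la M_s,\md^- N_s\ra$ and $\int_0^T\la N_s,\md^- M_s\ra$, and then invoking a separate lemma (their Proposition~7.1) that for \emph{predictable} integrands against a regular martingale the forward integral coincides with the It\^o integral in $L^\gamma$. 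This is where the absolute continuity of $[M]$ actually enters. None of this changes your strategy, but both points require the extra page of estimates the paper supplies.
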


\section{Sensitivity of (bi)causal DRO}
\label{sec-main}

With the tools developed so far, we can now give rigorous statements of our main results on the sensitivity of (bi)causal DRO problems to model uncertainty. We recall that $p>1$ is fixed, $q=\frac{p}{p-1}$, and the parametrized penalty is given by \(L_{\delta}(\cdot)=\delta L(\cdot/\delta)\).
We impose the following assumption on  \(L\).
The growth condition ensures that when \(\delta\) goes to 0, the adversarial distribution will converge to the reference model \(\mu\). We stress that the proofs offer direct characterizations of the first-order optimal adversarial model perturbations. We make this explicit only for the first theorem, see Remark \ref{rk:adversarialdirection}. As highlighted in \citep{bai2024wasserstein} this can be as important as the sensitivity computation itself.

\begin{asmp}
    \label{asmp-pen}
    We assume that \(L:[0,+\infty)\to[0,+\infty]\) is continuous, non-decreasing, and satisfies
    \begin{equation*}
        L(0)=0 \quad \text{and}\quad \liminf_{u\to\infty}\frac{L(u)}{u^{p}}=+\infty.
    \end{equation*}
\end{asmp}
We write the convex conjugate of \(L\) as \(L^{*}\) given by
\begin{equation*}
    L^{*}(v)=\sup_{ u \geq 0} \{uv - L(u)\}.
\end{equation*}

\subsection{Discrete-time results}
We start with the discrete time setting. We state and discuss the results, with the proofs deferred to Section \ref{sec-discrete}.
Let \(d_{\c}\) and \(d_{\bc}\) be the causal and bicausal transport cost induced by the cost
\begin{equation*}
    c_{N}(x,y)=\sum_{n=1}^{N}|\Delta x_{n}- \Delta y_{n}|^{p},
\end{equation*}
and recall \(\mD\) is the discrete Malliavin derivative in \eqref{eq:def_discrete_MD}. We consider
\begin{equation}\label{eq:Vdef}
    V(\delta)=\sup_{\nu\in \scrP(\cX)}\lb\{\E_{\nu}[f(X)]-L_{\delta}(d_{\c}(\mu,\nu)^{1/p})\rb\}.
\end{equation}

\begin{asmp}
    \label{asmp-loss}
    We assume  that \(f:\cX\to \bbR\)  is continuously differentiable.
    Moreover, \(\mD f\) satisfies for any \(x\in \cX\)
    \begin{equation}
        \label{eqn-grad}
        |\mD f(x)|\leq C (1+|x|^{p-1}).
    \end{equation}
\end{asmp}

\begin{thm}
    \label{thm-sens}
    Under Assumptions \ref{asmp-pen} and \ref{asmp-loss}, we have
    \begin{equation*}
        V(\delta)=V(0)+\Upsilon\delta +o(\delta),
    \end{equation*}
    where
    \begin{equation*}
        \Upsilon:=\lim_{\delta\to 0}\frac{1}{\delta}(V(\delta)-V(0))=L^{*}\lb(\E_{\mu}\lb[\sum_{n=1}^{N}\bigl|\E_{\mu}[\mD_{n} f(X)|\cF_{n}]\bigr|_{*}^{q}\rb]^{1/q}\rb)=L^{*}\lb(\|\op\mD f\|_{L^{q}(\mu)}\rb).
    \end{equation*}
\end{thm}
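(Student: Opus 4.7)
The plan is to prove matching upper and lower bounds on $V(\delta)-V(0)$.

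\textbf{Upper bound.} Fix $\nu\in\scrP(\cX)$ with $d_{\c}(\mu,\nu)<\infty$ and $\pi\in\Pi_{\c}(\mu,\nu)$. A first-order Taylor expansion of $f$ combined with the change of variable $\la\nabla f(x),y-x\ra=\la\mD f(x),\Delta y-\Delta x\ra$ yields
\begin{equation*}
\E_{\pi}[f(Y)-f(X)]=\sum_{n=1}^{N}\E_{\pi}\lb[\la\mD_{n}f(X),\Delta Y_{n}-\Delta X_{n}\ra\rb]+R(\pi),
\end{equation*}
where the remainder $R(\pi)$ is $o(\E_{\pi}[c_{N}(X,Y)]^{1/p})$ as $\E_{\pi}[c_{N}(X,Y)]\to0$, thanks to the $C^{1}$ regularity of $f$ and the growth bound \eqref{eqn-grad}. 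I would then exploit causality: for $\pi\in\Pi_{\c}(\mu,\nu)$, the conditional law of $X$ given $\cF_{n}\otimes\cG_{n}$ coincides with the $\mu$-conditional law of $X$ given $\cF_{n}$; since $\Delta Y_{n}-\Delta X_{n}$ is $\cF_{n}\otimes\cG_{n}$-measurable, this lets me replace $\mD_{n}f(X)$ by its optional projection $\op\mD_{n}f=\E_{\mu}[\mD_{n}f(X)|\cF_{n}]$. Two successive applications of Hölder's inequality (pointwise in $\bbR^{d}$ with $(|\cdot|,|\cdot|_{*})$, then across time with exponents $(p,q)$) give
\begin{equation*}
\sum_{n=1}^{N}\E_{\pi}\lb[\la\op\mD_{n}f,\Delta Y_{n}-\Delta X_{n}\ra\rb]\le\|\op\mD f\|_{L^{q}(\mu)}\,\E_{\pi}[c_{N}(X,Y)]^{1/p}.
\end{equation*}
Taking the infimum over $\pi$, subtracting $L_{\delta}(d_{\c}(\mu,\nu)^{1/p})$, and using $L_{\delta}(u)=\delta L(u/\delta)$ together with the definition of $L^{*}$ gives $V(\delta)-V(0)\le\delta L^{*}(\|\op\mD f\|_{L^{q}(\mu)})+o(\delta)$.

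\textbf{Lower bound.} I would exhibit an explicit adapted perturbation saturating each inequality above. Let $u^{*}$ attain the supremum in $L^{*}(\|\op\mD f\|_{L^{q}(\mu)})$, and let $J:\bbR^{d}\to\bbR^{d}$ denote the duality map sending $\xi\in(\bbR^{d},|\cdot|_{*})$ to the unique $J(\xi)\in(\bbR^{d},|\cdot|)$ realizing equality in the vectorial Hölder bound. Set $\phi_{n}=\op\mD_{n}f$ and define, with a suitable scalar normalization $C>0$ making the $L^{p}(\mu)$ cost equal to $(\delta u^{*})^{p}$,
\begin{equation*}
\Phi_{\delta}(x)=x+\frac{\delta u^{*}}{C}\sum_{n=1}^{N}|\phi_{n}(x)|_{*}^{q/p-\kappa}\,J(\phi_{n}(x))\,\1_{\{m\ge n\}}(m)_{m=0}^{N},
\end{equation*}
so that $\Delta\Phi_{\delta}(X)_{n}-\Delta X_{n}$ is proportional to $J(\phi_{n}(X))$ and $\cF_{n}$-measurable. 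The induced coupling $\pi_{\delta}=(\Id,\Phi_{\delta})_{\#}\mu$ is therefore causal, yielding $d_{\c}(\mu,\Phi_{\delta\#}\mu)^{1/p}\le\delta u^{*}$. A first-order Taylor expansion, again justified by \eqref{eqn-grad}, and the saturation of both Hölder steps by construction give
\begin{equation*}
\E_{\mu}[f(\Phi_{\delta}(X))]-\E_{\mu}[f(X)]=\delta u^{*}\|\op\mD f\|_{L^{q}(\mu)}+o(\delta),
\end{equation*}
hence $V(\delta)-V(0)\ge\delta\{u^{*}\|\op\mD f\|_{L^{q}(\mu)}-L(u^{*})\}+o(\delta)=\delta L^{*}(\|\op\mD f\|_{L^{q}(\mu)})+o(\delta)$. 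If needed, Proposition \ref{prop-dense} lets me replace $\Phi_{\delta}$ by a bicausal $\Phi_{\delta}^{\eps}$ with arbitrarily small additional cost.

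\textbf{Main obstacle.} The truly delicate ingredient is the $o(\delta)$ bookkeeping in the upper bound: I need a deterministic modulus $l(r)\downarrow0$, independent of $\nu$, such that $R(\pi)\le l(\E_{\pi}[c_{N}])\,\E_{\pi}[c_{N}]^{1/p}$ uniformly along a sequence of near-maximizers $\nu_{\delta}$. Two ingredients combine to deliver this: first, the superlinear growth $L(u)/u^{p}\to\infty$ from Assumption~\ref{asmp-pen} forces any near-maximizer to satisfy the a priori bound $d_{\c}(\mu,\nu_{\delta})^{1/p}=O(\delta)$; second, this bound, together with the polynomial growth of $\mD f$ in \eqref{eqn-grad}, yields uniform $p$-integrability of the second-marginals $\nu_{\delta}$, which is what allows the Taylor remainder to be controlled uniformly by a deterministic modulus. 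The remaining technical task, matching the constant of the saturating perturbation $\Phi_{\delta}$ so that the leading-order coefficient is exactly $u^{*}\|\op\mD f\|_{L^{q}(\mu)}$, is essentially a bookkeeping exercise once the duality map $J$ and the sum-level Hölder equality case have been correctly identified.
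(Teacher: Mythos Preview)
Your proposal is correct and follows essentially the same approach as the paper: both establish matching upper and lower bounds via a Taylor expansion, use causality to replace $\mD f$ by its optional projection, apply H\"older, and construct an explicit adapted Monge perturbation saturating H\"older, with the super-linear growth of $L$ supplying the a priori cost bound $d_{\c}(\mu,\nu_\delta)^{1/p}=O(\delta)$ (the paper packages this last step into Lemma~\ref{lem-pen} and keeps the integral form $\int_0^1\mD f(X+\lambda(Y-X))\,d\lambda$ rather than splitting off a remainder, but these are purely organizational differences). Two minor points: your formula for $\Phi_\delta$ contains a stray undefined $\kappa$, and the appeal to Proposition~\ref{prop-dense} is unnecessary here---the constructed $\pi_\delta$ is already causal, and bicausal density is only needed in the martingale-constrained Theorem~\ref{thm-sens-mart}.
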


\begin{rmk}
    \label{rk:adversarialdirection}
    The first-order optimal adversarial model $\nu_\delta$ which approximates \eqref{eq:Vdef} for small $\delta$ is given explicitly in the proof as \(\nu_{\delta}=(\Id + u \delta \Delta^{-1}\circ \Phi)_{\#}\mu\), where $\Phi$ is given in \eqref{eq:Phindef_discrete} and $u$ is such that $\Upsilon = u\|\op\mD f\|_{L^{q}(\mu)} - L(u)$.
\end{rmk}
We now turn to the problem under martingale constraint. Let $\mu\in\scrM(\cX)$ and consider
\begin{equation*}
    V_{\M}(\delta)=\sup_{\nu\in \scrM(\cX)}\lb\{\E_{\nu}[f(X)]-L_{\delta}(d_{\bc}(\mu,\nu)^{1/p})\rb\}.
\end{equation*}
\begin{rmk}
    We note that in the discrete-time setting the bicausal and the causal penalizations are interchangeable by Proposition \ref{prop-dense}.
    We use different penalizations for $V(\delta)$ and $V_{\M}(\delta)$ to be consistent with their continuous-time counterparts.
\end{rmk}

\begin{thm}
    \label{thm-sens-mart}
    Let \(\scrH\) denote the set of predictable processes $h$ with $\|h\|_{L^q(\mu)}<\infty.$
    Under Assumptions \ref{asmp-pen} and \ref{asmp-loss}, we have
    \begin{equation*}
        V_{\M}(\delta)=V_{\M}(0)+\Upsilon_{\M}\delta +o(\delta),
    \end{equation*}
    where
    \begin{equation*}
        \Upsilon_{\M}=L^{*}\lb( \inf_{h\in\scrH}\E_{\mu}\lb[\sum_{n=1}^{N}\bigl|\E_{\mu}[\bbD_{n} f(X)|\cF_{n}]-h_{n}\bigr|_{*}^{q}\rb]^{1/q}\rb)=L^{*}\lb(\inf_{h\in\scrH}\|\op\mD f-h\|_{L^{q}(\mu)}\rb).
    \end{equation*}
    In particular, if \(p=2\), we obtain
    \begin{equation*}
        \Upsilon_{\M}=L^{*}\lb(\|\op\mD f-\pp\mD f\|_{L^{2}(\mu)}\rb).
    \end{equation*}
\end{thm}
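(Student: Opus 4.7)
The plan is to mirror the argument of Theorem \ref{thm-sens}, tracking how the martingale constraint on the adversarial measure $\nu\in\scrM(\cX)$ translates---via bicausality and Proposition \ref{prop-mart}---into a constraint on admissible first-order perturbations of $\mu$, namely that their predictable projection vanishes; the characterization of $\Upsilon_{\M}$ then emerges as an $L^{p}$--$L^{q}$ duality between martingale-difference and predictable processes.

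For the upper bound, I would fix $\nu\in\scrM(\cX)$ with $d_{\bc}(\mu,\nu)\lesssim\delta^{p}$ (essentially WLOG by the coercivity of $L$) and take a near-optimal $\pi\in\Pi_{\bc}(\mu,\nu)$. Proposition \ref{prop-mart} makes $(X,Y)$ a martingale under $\pi$ in $(\cF_{n}\otimes\cG_{n})_{n\in I}$, so $\Phi_{n}:=\Delta Y_{n}-\Delta X_{n}$ satisfies $\E_{\pi}[\Phi_{n}\mid\cF_{n-1}\otimes\cG_{n-1}]=0$. A Taylor expansion (justified by Assumption \ref{asmp-loss}, exactly as in Theorem \ref{thm-sens}) gives $\E_{\nu}[f(X)]-\E_{\mu}[f(X)]=\E_{\pi}[\la\mD f(X),\Phi\ra]+o(\|\Phi\|_{L^{p}(\pi)})$. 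A short causality computation (using $\prescript{\mu}{}\cF_{n}$-measurability of $x\mapsto\pi_{y}(x,V)$ for $V\in\cG_{n}$) yields $\E_{\pi}[\mD_{n}f(X)\mid\cF_{n}\otimes\cG_{n}]=\op\mD_{n}f(X)$. Moreover, for any $h\in\scrH$ the random variable $h_{n}$ is $\cF_{n-1}\otimes\cG_{n-1}$-measurable, so the martingale-difference property of $\Phi$ gives $\E_{\pi}[\la h,\Phi\ra]=0$. Thus, for every $h\in\scrH$, H\"older's inequality gives
\[
\E_{\pi}[\la\mD f,\Phi\ra]=\E_{\pi}[\la\op\mD f-h,\Phi\ra]\leq\|\op\mD f-h\|_{L^{q}(\mu)}\,\|\Phi\|_{L^{p}(\pi)},
\]
with $\|\Phi\|_{L^{p}(\pi)}^{p}=\E_{\pi}[c_{N}(X,Y)]\leq d_{\bc}(\mu,\nu)(1+o(1))$. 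Taking infimum over $h$, supremum over $\nu$, and optimizing the resulting scalar via $\sup_{r\geq0}\{vr-\delta L(r/\delta)\}=\delta L^{*}(v)$ together with monotonicity of $L^{*}$, one obtains $V_{\M}(\delta)-V_{\M}(0)\leq\delta L^{*}(\inf_{h\in\scrH}\|\op\mD f-h\|_{L^{q}(\mu)})+o(\delta)$.

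For the lower bound, note that if an adapted $\Phi$ satisfies $\pp\Phi=0$ then $\nu_{\delta}:=(\Id+\delta\Delta^{-1}\Phi)_{\#}\mu\in\scrM(\cX)$, since $\Delta X+\delta\Phi$ is a martingale difference in $(\cF_{n})$. Combined with the cost bound $d_{\bc}(\mu,\nu_{\delta})^{1/p}\leq\delta\|\Phi\|_{L^{p}(\mu)}$ and a Taylor expansion, the task reduces to evaluating
\[
\sup_{\Phi:\,\pp\Phi=0}\lb\{\E_{\mu}[\la\op\mD f,\Phi\ra]-L(\|\Phi\|_{L^{p}(\mu)})\rb\}.
\]
Writing $\E_{\mu}[\la\op\mD f,\Phi\ra]=\E_{\mu}[\la\op\mD f-h,\Phi\ra]$ for every $h\in\scrH$ and invoking the duality between the closed subspace $\{\Phi\in L^{p}(\mu):\pp\Phi=0\}$ and its $L^{q}$-annihilator $\scrH$ gives
\[
\sup_{\|\Phi\|_{L^{p}(\mu)}=1,\,\pp\Phi=0}\E_{\mu}[\la\op\mD f,\Phi\ra]=\inf_{h\in\scrH}\|\op\mD f-h\|_{L^{q}(\mu)},
\]
so the supremum equals $L^{*}(\inf_{h\in\scrH}\|\op\mD f-h\|_{L^{q}(\mu)})$. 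An explicit near-maximizer $\Phi^{\varepsilon}$ with $\pp\Phi^{\varepsilon}=0$ is obtained from a near-minimizer $h^{\varepsilon}$ via the first-order optimality condition, which forces the dual direction (loosely $|g^{\varepsilon}|_{*}^{q-1}\mathrm{sgn}(g^{\varepsilon})$ with $g^{\varepsilon}=\op\mD f-h^{\varepsilon}$) to have vanishing predictable projection; Proposition \ref{prop-dense} (in its martingale-preserving form) is used if needed to deform the pushforward map into a strictly bicausal martingale-preserving one without affecting the first-order estimate. For the special case $p=2$, the step trivializes: the $L^{2}(\mu)$-orthogonal projection of $\op\mD f$ onto predictable processes is $\pp(\op\mD f)=\pp\mD f$ by the tower property, so $\inf_{h}\|\op\mD f-h\|_{L^{2}(\mu)}=\|\op\mD f-\pp\mD f\|_{L^{2}(\mu)}$, yielding the second displayed formula.

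The main obstacle is making the duality effective for general $p>1$: one must either produce the admissible near-maximizer $\Phi^{\varepsilon}$ explicitly (through the Euler--Lagrange condition $\pp(|g^{*}|_{*}^{q-1}\mathrm{sgn}(g^{*}))=0$ at the optimal $h^{*}$) or invoke reflexivity and strict convexity of $L^{q}$ to guarantee existence of a minimizer whose associated dual direction has zero predictable projection. A secondary technical point is to ensure that the Taylor remainder is uniformly $o(\delta)$ over $\nu$ with $d_{\bc}(\mu,\nu)\lesssim\delta^{p}$, which follows from the growth bound \eqref{eqn-grad} via standard dominated-convergence arguments identical to those in the proof of Theorem \ref{thm-sens}.
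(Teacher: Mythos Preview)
Your proposal is correct and follows essentially the same route as the paper's proof: both use Proposition~\ref{prop-mart} to make $(X,Y)$ a joint martingale under $\pi\in\Pi_{\bc}(\mu,*)$, subtract an arbitrary predictable $h$ before applying H\"older for the upper bound, construct the lower-bound direction via the Euler--Lagrange condition $\pp\Phi=0$ at the minimizing $h^{*}$, and invoke Proposition~\ref{prop-dense} to repair bicausality. The only presentational difference is that the paper works directly with the explicit minimizer $h^{*}$ and the dual map $v$ from \eqref{eq:proofdiscrete_funct_v}, whereas you first frame the lower bound as an abstract $L^{p}$--$L^{q}$ duality (martingale differences versus their annihilator $\scrH$) and then specialize; the paper also restricts to $h\in\scrH\cap C^{2}_{b}$ in the upper bound to reuse the uniform-integrability/Wasserstein-convergence step from Theorem~\ref{thm-sens} verbatim, before passing to the infimum over $\scrH$ by density.
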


\begin{rmk}\label{rmk:linkwithDaniJohannes}
    We note that a variant of the unconstrained case, Theorem \ref{thm-sens}, was established independently in \citet{bartlSensitivityMultiperiodOptimization2022} with \(L=+\infty\1_{(1,\infty)}\) and \(c(x,y)=\sum_{n=1}^{N}|x_{n}-y_{n}|^{p}\).
    For the martingale constrained case,  Theorem \ref{thm-sens-mart} intersects with the results in \citet{sauldubois24First}, where the authors independently derived the sensitivity for a two-step case with the same choice of \(L\) and \(c\) as in \citet{bartlSensitivityMultiperiodOptimization2022}. We emphasize the difference in the choice of the cost function: while equivalent in discrete time, our choice allows obtaining continuous time results as limits of the discrete time results.
    For the martingale constraint problem in Theorem~\ref{thm-sens-mart}, we can formally rewrite it as an unconstrained problem with a Lagrange multiplier:
    \begin{equation*}
        V_{\M}(\delta)= \sup_{\nu\in\scrP(\cX)}\inf_{h\in\scrH_b}\lb\{\E_{\nu}[f(X)-h\circ X]-L_{\delta}(d_{\bc}(\mu,\nu)^{1/p})\rb\},
    \end{equation*}
    where \(\scrH_b\) is the set of bounded predictable processes and \(h\circ X\) denotes the discrete stochastic integral.
    This formulation aligns with \citet[Theorem 2.4]{bartlSensitivityMultiperiodOptimization2022} by taking \(L=+\infty\1_{(1,\infty)}\) and \(c(x,y)=|x-y|^{p}\).
    However, \citet[Assumption 2.4]{bartlSensitivityMultiperiodOptimization2022} essentially imposes  the uniqueness of the optimizer \(h^{*}\) for
    \begin{equation*}
        \inf_{h\in\scrH_b} \E_{\mu}[f(X)-h\circ X],
    \end{equation*}
    which is of sharp contrast to the martingale constraint problem where any \(h\in\scrH_b\) is an optimizer.

\end{rmk}

\begin{exam}\label{ex:Greek}
    In the context of derivatives pricing in mathematical finance, the (bi)causal DRO sensitivity under martingale constraint can be viewed as a \emph{nonparametric} Greek. It captures the sensitivity of the option price to model uncertainty. This was first observed in \cite{bartlSensitivityAnalysisWasserstein2021} in the context of perturbations of the distribution of the underlying stock price process at a given time (the maturity). Having derived sensitivities in a dynamic context, we can consider perturbations of the actual model for the price process.

    We consider $d=1$ and a discrete-monitored Asian option whose payoff is given by
    \begin{equation}\label{eq:Asianpayoff}
        f(X)=\max\lb\{0,\bar{X}-K\rb\} \quad \text{with} \quad \bar{X}=\frac{1}{N+1}\sum_{n=0}^{N}X_{n}.
    \end{equation}
    Let \(\mu\in \scrM(\cX)\) be the reference risk-neutral (pricing) measure.
    Without loss of generality, we assume \(\mu\) is centreed, otherwise we can always shift the market by a constant and absorb it into \(K\).
    Notice that
    \begin{equation*}
        \mD_{n} f(X)=\frac{N+1-n}{N+1}\1_{\{\bar{X}\geq K\}}.
    \end{equation*}
    For simplicity, we take \(p=2\) and \(L=+\infty\1_{(1,\infty)}\).
    By Theorem \ref{thm-sens-mart}, we derive the nonparametric `Greek' of the Asian option as
    \begin{align}
        \Upsilon_{\M} & = \lb(\E_{\mu}\lb[\sum_{n=1}^{N} \lb|\E_{\mu}[\mD_{n}f(X)|\cF_{n}]-\E_{\mu}[\mD_{n}f(X)|\cF_{n-1}]\rb|^{2}\rb]\rb)^{1/2}    \nonumber                                      \\
                      & =\lb(\E_{\mu} \lb[ \sum_{n=1}^{N}\frac{(N+1-n)^{2}}{(N+1)^2}\lb|\mu(\bar{X}\geq K|\cF_{n})-\mu(\bar{X}\geq K|\cF_{n-1})\rb|^{2}\rb] \rb)^{1/2}.\label{eq:UpsilonMartAsian}
    \end{align}
    To compare this result with a parametric sensitivity, consider $\mu(\mathfrak{j})$ to be the distribution of a symmetric random walk with jump size $\pm \mathfrak{j}$, and set $\mu=\mu(1)$, the distribution of the simple symmetric random walk.
    The resulting sensitivity is displayed in Figure \ref{fig:asiandiscrete}.
    Whilst the parametric sensitivity captures the main risk, the nonparametric one dominates it, as expected.
    Inspecting the first-order optimal adversarial model perturbation, see \eqref{eq:Phindef_discrete}, reveals that it involves both the jump size and their symmetry being broken simultaneously.

    \begin{figure}
        \centering
        \includegraphics[width=10cm]{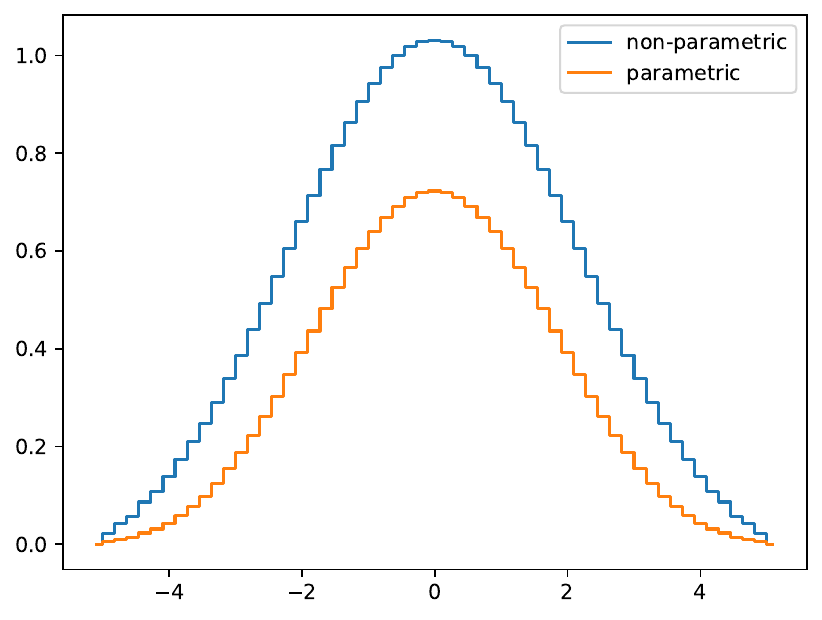}
        \caption{Comparison of sensitivities of the Asian option \eqref{eq:Asianpayoff} price under $\mu$, with $N=10$, as a function of strike $K$: parametric sensitivity with respect to the jump size $\mathfrak{j}$ and the non-parametric  $\Upsilon_{\M}$ given in \eqref{eq:UpsilonMartAsian}.
        }
        \label{fig:asiandiscrete}
    \end{figure}

\end{exam}

\subsection{Continuous-time results}
We switch now to the continuous time setting. As before, we state and discuss the main results, with proofs deferred to \ref{sec-continuous}.
We start with the  hyperbolic scaling where the transport cost is given by
\[
    c(\omega,\eta)= \limsup_{N\to\infty}\frac{N^{p-1}}{T^{p-1}} \sum_{n=1}^{N}|\Delta\omega_{nT/N}-\Delta \eta_{nT/N}|^{p}=  \left\{\begin{aligned}&\|\omega-\eta\|_{W^{1,p}_{0}}^{p}  \text{ if } \omega-\eta\in W_{0}^{1,p},\\ & +\infty  \qquad \quad \text{ elsewhere}.\end{aligned}\right.
\]

\begin{asmp}
    \label{asmp-cont}
    Function \(f:\cX\to\bbR\) is in \(\mD^{1}_{b}\) and satisfies
    \begin{itemize}
        \item \(|f(\omega)|\leq C (1+\|\omega\|_{\infty}^{p})\), \(\omega\in \cX\);
        \item \(\mD_{t} f\) is continuous, and \(|\mD_{t}f(\omega)|\leq C (1 +\|\omega\|_{\infty}^{p-1})\), $t\in [0,T]$.
    \end{itemize}
\end{asmp}
\begin{rmk} We recall that \(\mD^{1}_{b}\) was introduced in Definition \ref{def:Db1}.
    Note that we do not expect the growth of \(\mD_{t}f\) to imply a control on the growth of \(f\).
    This is because \(\mD_{t}f\) is only a directional derivative along a proper subspace of the tangent space.
\end{rmk}

\begin{thm}
    \label{thm-cont}
    Let $p>1$ and suppose Assumptions \ref{asmp-pen} and  \ref{asmp-cont} hold, and that  \(\mu\in \scrP(\cX)\) satisfies \(\E_{\mu}[\sup_{t\in[0,T]}|X_{t}|^{p}]<\infty\).
    Then, with \(V(\delta)\) given in \eqref{eq:Vdef}, we have
    \begin{equation*}
        V(\delta)=V(0)+\Upsilon\delta+o(\delta),
    \end{equation*}
    where
    \begin{equation*}
        \Upsilon=\lim_{\delta\to 0}\frac{1}{\delta}(V(\delta)-V(0))=L^{*}\lb(\|\op\mD f\|_{L^{q}(\mu)}\rb).
    \end{equation*}
\end{thm}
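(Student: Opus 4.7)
The plan is to establish matching lower and upper bounds for the first-order expansion, combining an explicit adversarial construction afforded by the pathwise Malliavin calculus of Section~\ref{sec-tools} with the discrete sensitivity already available from Theorem~\ref{thm-sens}.

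For the \textbf{lower bound}, I would construct an explicit adversarial model. Fix $u\ge 0$ and $\varepsilon>0$. Since $\op\mD f\in L^q(\mu\otimes dt)$ under Assumption~\ref{asmp-cont}, one can select a bounded $(\cF_t^X)$-adapted process $g^\varepsilon\colon[0,T]\times\cX\to\bbR^d$ with $\E_\mu\bigl[\int_0^T |g^\varepsilon_t|^p\,dt\bigr]\le 1$ satisfying
\[
    \E_\mu\!\left[\int_0^T \la \op\mD_t f(X),\,g^\varepsilon_t\ra\,dt\right]\ \ge\ \|\op\mD f\|_{L^q(\mu)}-\varepsilon.
\]
Set $A_t:=\int_0^t g^\varepsilon_s\,ds$; this is adapted with paths in $W^{1,p}_0$ and pathwise norm $\|A\|_{W^{1,p}_0}^p=\int_0^T|g^\varepsilon_t|^p\,dt$. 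The candidate $\nu_\delta=(\Id+\delta u A)_\#\mu$ is the second marginal of the bicausal coupling $(\Id,\Id+\delta u A)_\#\mu$, so
\[
    d_\c(\mu,\nu_\delta)\ \le\ \E_\mu\!\left[\|\delta u A\|^p_{W^{1,p}_0}\right]\ \le\ (\delta u)^p.
\]
Using the characterization \eqref{eq:pathMD_char} together with the growth bound on $\mD f$ to justify dominated convergence, I obtain
\[
    \E_{\nu_\delta}[f(X)]-\E_\mu[f(X)]\ =\ \delta u\,\E_\mu\!\left[\int_0^T\la \mD_t f(X),g^\varepsilon_t\ra dt\right]+o(\delta),
\]
and the tower property (using that $g^\varepsilon$ is $(\cF_t^X)$-adapted) replaces $\mD_t f$ by $\op\mD_t f$ under the inner integral. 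Subtracting the penalty $L_\delta(d_\c(\mu,\nu_\delta)^{1/p})\le\delta L(u)$, letting $\varepsilon\downarrow 0$ and optimizing over $u\ge 0$ yields $\liminf_{\delta\downarrow 0}\delta^{-1}(V(\delta)-V(0))\ge L^*(\|\op\mD f\|_{L^q(\mu)})$.

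For the \textbf{upper bound}, my strategy is to reduce to the discrete sensitivity. Let $X^N:=(X_{kT/N})_{k=0}^N$, $\mu^N:=\operatorname{Law}_\mu(X^N)$, and $f^N(x^N):=\E_\mu[f(X)\mid X^N=x^N]$; let $V^N(\delta)$ be the value of the discrete DRO problem of Theorem~\ref{thm-sens} with cost $T^{1-p}N^{p-1}c_N$. By construction $T^{1-p}N^{p-1}c_N(\omega^N,\eta^N)\uparrow \|\omega-\eta\|^p_{W^{1,p}_0}$ as $N\to\infty$ (with $+\infty$ on the complement of $W^{1,p}_0$), which combined with the superlinear growth of $L$ forces $V^N(\delta)\to V(\delta)$. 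Theorem~\ref{thm-sens} then yields $V^N(\delta)=V^N(0)+\delta L^*(\|\op\mD f^N\|_{L^q(\mu^N)})+R_N(\delta)$. On the other hand, continuity and polynomial growth of $\mD f$ (Assumption~\ref{asmp-cont}) together with a martingale-convergence argument along the filtrations $\sigma(X^N)$ give $\|\op\mD f^N\|_{L^q(\mu^N)}\to\|\op\mD f\|_{L^q(\mu)}$, so pushing the expansion to the limit produces $\limsup_{\delta\downarrow 0}\delta^{-1}(V(\delta)-V(0))\le L^*(\|\op\mD f\|_{L^q(\mu)})$.

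The \textbf{main obstacle} will be controlling the remainder $R_N(\delta)$ uniformly in $N$, which is needed to exchange the limits $\delta\downarrow 0$ and $N\to\infty$. I would handle this using Assumption~\ref{asmp-pen}: superlinearity of $L$ restricts near-optimizers of $V^N(\delta)$ to a $d_\c^{1/p}$-ball of radius $O(\delta)$, and a second-order Taylor expansion of $f^N$ along adapted directions on this ball, with error bounded via the uniform growth estimate on $\mD f$, delivers $R_N(\delta)=o(\delta)$ with a rate independent of $N$. A secondary difficulty is the identification of the limit $\lim_N \op\mD f^N = \op\mD f$ in~$L^q$: this combines the discrete-to-continuous Malliavin correspondence underlying Proposition~\ref{prop-derivative} with a backward-martingale convergence of conditional expectations along the dyadic filtrations, both made tractable by the continuity and polynomial growth hypotheses on $\mD f$.
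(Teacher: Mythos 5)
Your \textbf{lower bound} argument is correct and essentially the same as the paper's verification of condition \eqref{eqn-lem-lb} in Lemma~\ref{lem-pen}: the paper also constructs $\pi_\delta=(\Id,\Id+u\delta\Phi)_\#\mu$ with $\Phi_t=\int_0^t v(\op\mD_s f)\md s$ and invokes Proposition~\ref{prop-derivative} together with dominated convergence; where you use an $\varepsilon$-near-maximizer $g^\varepsilon$ of the $L^p$-$L^q$ duality pairing, the paper instead inserts the exact duality map $v(e)=(e_i|e_i|^{q-2})_i$, but these are interchangeable.

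The \textbf{upper bound}, however, has a genuine gap, and it is not just the acknowledged difficulty of controlling $R_N(\delta)$ uniformly in $N$. The object $V^N(\delta)$ you define is the DRO value over probability measures $\nu^N$ on the discrete path space $\{0\}\times(\bbR^d)^N$, with objective $f^N(x^N)=\E_\mu[f(X)\mid X^N=x^N]$. Two structural problems prevent the identification $V^N(\delta)\to V(\delta)$: (i) a discrete adversary $\nu^N$ in a causal ball around $\mu^N$ need not lift to any continuous-path measure $\nu$ in a causal ball around $\mu$ with comparable radius (the direction of cost comparison you invoke actually makes the discrete ball \emph{larger}, not smaller); and (ii) even when a lift exists, $\E_{\nu^N}[f^N(X^N)]\neq\E_\nu[f(X)]$ whenever $\nu\neq\mu$, since $f^N$ is a conditional expectation computed under $\mu$ and this Radon--Nikodym structure is destroyed under a change of measure. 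Consequently $V^N(\delta)$ is not a surrogate for $V(\delta)$ that Theorem~\ref{thm-sens} can be fed into, and the proposed interchange of limits has no starting point. The paper sidesteps all of this by never discretizing the ambiguity set or the measures: for the upper bound \eqref{eqn-lem-ub} it works directly with a continuous-path coupling $\pi\in\Pi_{\c}(\mu,*)$, discretizes only the increment $(Y-X)^N$ via piecewise-constant interpolation inside the first-order expansion from Proposition~\ref{prop-derivative}, applies H\"older and the reverse Fatou lemma to obtain the Gateaux coefficient $\int_0^1\|\op\mD f(X+\lambda(Y-X))\|_{L^q(\pi)}\md\lambda$, and then verifies the required semicontinuity as $\E_\pi[c(X,Y)]\to 0$. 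If you want to salvage a discrete-to-continuous reduction, you would need a lemma comparing $V^N(\delta)$ and $V(\delta)$ with errors explicit and $o(\delta)$ uniformly in $N$, which is essentially as hard as the direct estimate the paper performs.

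A minor remark on the lower bound: the inequality $T^{1-p}N^{p-1}c_N(\omega^N,\eta^N)\le\|\omega-\eta\|_{W_0^{1,p}}^p$ for all $N$ follows from Jensen applied increment by increment, but the claimed monotone increase $\uparrow$ is only guaranteed along nested (e.g.\ dyadic) partitions; for arbitrary $N$ the sequence converges but need not be monotone. This does not affect your lower bound, which only needs the one-sided inequality.
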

\begin{exam} \label{ex:merton}
    We consider Merton's \citep{merton:69} classical setting of a log investor maximizing the expected utility of their terminal wealth.
    The stock price process follows the standard Black-Scholes model, with \(S\) solving
    \[ dS_t = \zeta S_t dt + \sigma S_t dX_t, \]
    where \(X\) is a Brownian motion. The agent invests their wealth into the stock and a riskless asset which grows at a constant interest rate \(r\). Suppose their initial wealth is \(\kappa\) and let \(\theta_t\) denote the \emph{proportion} of their wealth invested in the risky asset at time \(t\), which is assumed to be \(\cF_t\)-measurable. Their wealth process \((K_t^\theta)_{t\in I}\) evolves according to
    \[ dK^\theta_t = (r+\lambda\theta_t\sigma)K^\theta_t dt + \sigma\theta_t K^\theta_t dX_t,\]
    where \(\lambda = (\zeta-r)/\sigma\), known as the market price of risk, is the key market parameter the investor has to estimate.
    Merton's problem of maximizing \(\bbE[\log(K^\theta_T)]\) over the choice of \(\theta\) is solved taking \(\theta_t = \lambda/\sigma\), and the resulting wealth satisfies
    \(K^*_T = \kappa \exp((r+\lambda^2/2)T + \lambda X_T).\)
    Agent's expected utility is given by \(V(0)= \bbE[\log(K^*_T)] = \log(\kappa) + (r+\lambda^2/2) T\) and its parametric sensitivity to \(\lambda\), which captures the agent welfare's sensitivity to the estimated parameter, is \(\frac{\partial}{\partial \lambda}V(0)= \lambda T\). The general sensitivity to model uncertainty, around \(\mu\) the Wiener measure, can be computed using Theorem \ref{thm-cont} for
    \[f(X)= \log(K^*_T) = \log(\kappa) + (r+\lambda^2/2)T+\lambda X_T.\] Taking \(p=2\) and  \(L=+\infty\1_{(1,\infty)}\), we obtain \(\Upsilon = \lambda \sqrt{T}\).
    We see that \(\Upsilon\) recovers the parametric sensitivity but with a different scaling in time. Indeed, $\sqrt{T}$ is the natural Brownian scaling in time and we kept the uncertainty penalty $L$ independent of time. If instead, we set \(L=+\infty\1_{(\sqrt{T},\infty)}\), introducing the natural Brownian scaling into the size of the uncertainty ball considered, then we obtain \(\Upsilon=\lambda T\), as before. Naturally, the parametric sensitivity only makes sense in the specific context of Black-Scholes price dynamics, and \(\Upsilon \) offers its natural non-parametric extension to general investment settings.
\end{exam}

\begin{exam}
    \label{exam-rough}
    We stress that our continuous-time sensitivity results can also go beyond the semi-martingale framework.
    Consider an objective given by a pathwise rough integral introduced in \citet{cont2019pathwise}.
    We fix a positive integer \(l\) and a sequence of partitions \(\mathfrak{p}=\{p_{1},p_{2},\dots\}\)
    where \(p_{n}=\{0=t_{0}<\cdots<t_{i_n}=T\}\).
    By \(\mathbb{V}_{2l}(\mathfrak{p})\), we denote the set of paths with finite \(2l\)-variation along \(\mathfrak{p}\) in the sense that
    \begin{equation*}
        \lim_{n\to\infty} \sum_{[t_{i},t_{i+1}]\in p_{n},t_i\leq t} |\omega_{t_{i+1}}-\omega_{t_{i}}|^{2l}=[\omega]_{t}^{2l}
    \end{equation*}
    for some continuous function \([\omega]^{2l}\).
    For any path \(\omega\in\mathbb{V}_{2l}(\mathfrak{p})\) and \(g\in C^{2l+1}\), we define the rough integral
    \begin{equation*}
        \int_{0}^{T} g'(\omega_{t})\bullet \md \omega_{t}:=\lim_{n\to\infty}\sum_{[t_{i},t_{i+1}]\in p_{n}}\sum_{k=1}^{2l-1}\frac{g^{(k)}(\omega_{t_{i}})}{k!}(\omega_{t_{i+1}}-\omega_{t_{i}})^{k}.
    \end{equation*}
    Then we take \(f(\omega)=\int_{0}^{T}g'(\omega_{t})\bullet\md \omega_{t}\) and the reference measure as the law of the fractional Brownian motion with Hurst parameter \(H=1/2l\).
    By \citet[Theorem 1.5]{cont2019pathwise}, we notice
    \begin{align*}
        \E_{\mu}[f(X)] & = \E_{\mu}\lb[g(X_{T})-g(X_{0})-\frac{1}{(2l)!}\int_{0}^{T}g^{(2l)}(X_{s})\md [X]^{2l}_{s}\rb]           \\
                       & =\E_{\mu}\lb[g(X_{T})-g(X_{0})-\frac{\E_{\mu}[|X_{1}|^{2l}]}{(2l)!}\int_{0}^{T}g^{(2l)}(X_{s})\md s\rb].
    \end{align*}
    For simplicity, we take \(d=1\), \(p=2\), and \(L=+\infty\1_{(1,\infty)}\).
    Therefore, by Theorem \ref{thm-cont}, we derive
    \begin{equation*}
        \Upsilon=\E_{\mu}\lb[\int_{0}^{T} \lb| \E_{\mu}\lb[g'(X_{T}) - \frac{\E_{\mu}[|X_{1}|^{2l}]}{(2l)!}\int_{t}^{T} g^{(2l+1)}(X_{s})\md s \Bigg|\cF_{t} \rb]\rb|^{2}\md t\rb]^{1/2}.
    \end{equation*}
\end{exam}

As mentioned in the introduction, the hyperbolic scaling is not critical for the martingale constraint problem since, by Doob's decomposition theorem, the difference between two martingales is of infinite variation. This gives an infinite transport cost under hyperbolic scaling, i.e., martingale measures are a `totally disconnected' set:
\begin{rmk}
    Let \(p>1\) and \(\mu\in \scrM(\Omega)\) satisfy \(\E_{\mu}[\sup_{t\in[0,T]}|X(t)|^{p}]<\infty\).
    If a martingale measure \(\nu\in\scrM(\Omega)\) satisfies \(d_{\c}(\mu,\nu)<\infty\), then \(\mu=\nu\).
\end{rmk}

As a consequence, we have to zoom out the scaling. Put differently, we need a cost which allows us to alter the quadratic variation of the path.
We focus on the case \(p=2\) and the reference measure \(\mu\) being the Wiener measure, leaving the general case for future studies. Adopting a parabolic scaling, we formally set cost function \(c\) as
\begin{equation*}
    c(\omega,\eta)= \limsup_{N\to\infty}\sum_{n=1}^{N}|\Delta\omega_{nT/N}-\Delta \eta_{nT/N}|^{2}=[\omega-\eta]_{T}.
\end{equation*}
\begin{rmk}
    \label{rmk:typeofpayoffmg}
    Consider formally taking continuous limit of \(\Upsilon_{\M}\) in Theorem \ref{thm-sens-mart} for \(p=2\):
    \begin{align*}
        \lim_{N\to\infty} & L^{*}\lb(\E_{\mu}\lb[\sum_{n=1}^{N}\lb|\E_{\mu}[\MD_{nT/N}f (X)|\cF_{nT/N}]-\E_{\mu}[\MD_{nT/N}f (X)|\cF_{(n-1)T/N}]\rb|_{*}^{2}\rb]^{1/2}\rb)          \\
        =                 & \lim_{N\to\infty} L^{*} \lb(\E_{\mu}\lb[\sum_{n=1}^{N}\int_{(n-1)T/N}^{nT/N} \|\E_{\mu}[\MD_{t}\MD_{nT/N}f (X)|\cF_{t}]\|^{2}_{\bF} \md t\rb]^{1/2}\rb) \\
                          & \approx L^{*} \lb(\E_{\mu}\lb[\int_{0}^{T} \|\E_{\mu}[\MD_{t}\MD_{t+}f (X)|\cF_{t}]\|^{2}_{\bF} \md t\rb]^{1/2}\rb),
    \end{align*}
    where we apply the Clark--Ocone formula in the second line and denote the Frobenius norm by \(\|\cdot\|_{\bF}\).
    Surprisingly, this limit does not always coincide with \(\Upsilon_{\M}\) for some objectives of our interest such as \(f(X)=\frac{1}{2}[X]_{T}\).
    If the above limit were true, we would have \(\Upsilon_{\M}=L^{*}(0)=0\) since \(\MD_{t} f(X)= 0\) for any \(t\in[0,T]\).
    However, a direct computation shows that \(\Upsilon_{\M}=L^{*}(\sqrt{T})\).
\end{rmk}
Motivated by the above remark, we consider objective functionals \(f\) of the form
\begin{equation*}
    f(\omega)=U\lb(\int_{0}^{T}\la h(t,\omega_{\cdot\wedge t}), \md \omega_{t}\ra\rb).
\end{equation*}
We remark that both \(c\) and \(f\) can be defined in a pathwise sense, see \citet{karandikar95pathwise}.

\begin{asmp}
    \label{asmp-wiener}
    We assume that \(h:I\times \cX\to \bbR^{d} \) is bounded and \(U\in C^{2}\) with  a bounded  second derivative.
    There exists \(\gamma \in(1,2)\) such that under any \(\pi\in\Pi_{\bc}(\mu,*)\) with \(\pi(\cX\times\cdot)\in\scrM(\cX)\) the following holds:
    \begin{itemize}
        \item  \(\E_{\pi}[\sup_{t\in[0,T]}|h(t,X_{\cdot\wedge t})-h(t,Y_{\cdot\wedge t})|^{2}]= o(\E_{\pi}[[X-Y]_{T}]^{1/2}).\)

        \item \(
              \E_{\pi}\Biggl[\sup_{t\in[0,T]}|h(t,Y_{\cdot\wedge t})-h(t,X_{\cdot\wedge t})-\int_{0}^{t} \MD_{s}^{\intercal}h(t,X_{\cdot\wedge t})\md^{-}(Y-X)_{s}|^{\gamma}\Biggr]\\ \ = o(\E_{\pi}[[X-Y]_{T}]^{\gamma/2}).
              \)
        \item \(\{\MD h(t,X_{\cdot \wedge t})\}_{t\in[0,T]}\) is \((Y-X)\) uniformly forward--\(\gamma\) integrable.
        \item \(\sup_{s,t\in[0,T]}\|\MD_{s} h(t,X_{\cdot\wedge t})\|_{\bF}\in L^{2\gamma/(2-\gamma)} \).
        \item \(\lim_{t\to s+}\E_{\pi}[\|\MD_{s}h(t,X_{\cdot\wedge t})-\MD_{t}h(t,X_{\cdot\wedge t})\|_{\bF}^{2}]=0\) for any \(s\in[0,T]\).
        \item  For \(H=\int_{0}^{T}\la h(t,X_{\cdot \wedge t}),\md X_{t}\ra\), \(\MD_{s+}H=\lim_{t\to s+} \MD_{t}H\) and \(\MD_{s+}\MD_{s}H= \lim_{t\to s+}\MD_{t}\MD_{s} H\) both converge in \(L^{2}\) for any \(s\in[0,T]\).
    \end{itemize}
\end{asmp}

\begin{thm}
    \label{thm-cont-mart}
    Take \(p=2\) and let \(\mu\) be the Wiener measure.
    Let Assumptions \ref{asmp-pen} and \ref{asmp-wiener} hold, and denote \(H=\int_{0}^{T}\la h(t,X_{\cdot\wedge t}), \md X_{t}\ra \).
    Recall
    \begin{equation}
        V_{\M}(\delta)=\sup_{\nu\in \scrM(\cX)}\lb\{\E_{\nu}\lb[U\lb(H\rb)\rb]-L_{\delta}(d_{\bc}(\mu,\nu)^{1/2})\rb\}.
    \end{equation}
    Then, we have
    \begin{equation*}
        V_{\M}(\delta)=V_{\M}(0)+\Upsilon_{\M}\delta+o(\delta),
    \end{equation*}
    where
    \begin{equation}
        \Upsilon_{\M}=L^{*}\lb(\E_{\mu}\lb[\int_{0}^{T} \| \varphi_{t}\|^{2}_{\bF}\md t\rb]^{1/2}\rb)
    \end{equation}
    and
    \begin{equation}
        \label{eqn-sens-cont-mart}
        \varphi_{s}= \prescript{\mathrm{p}}{}{\lb\{\MD_{s+}\MD_{s}U(H)-  U'(H) \MD_{s}^{\intercal}h(s,X_{\cdot\wedge s}) \rb\}}.
    \end{equation}
\end{thm}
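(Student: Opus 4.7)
The plan is to follow the template from Theorems \ref{thm-cont} and \ref{thm-sens-mart}: establish matching first-order upper and lower bounds for $V_{\M}(\delta)-V_{\M}(0)$. By Proposition \ref{prop-dense} (martingale case), in the upper bound it suffices to consider bicausal martingale couplings $\pi=(\Id,\Phi)_\#\mu$ with $\nu=\Phi_\#\mu\in\scrM(\cX)$; writing $Y=\Phi(X)$ and $Z=Y-X$, Proposition \ref{prop-mart} says $(X,Y)$, and hence $Z$, are martingales under $\pi$, and the parabolic cost reduces to $d_\bc(\mu,\nu)=\E_\pi[[Z]_T]$, so $\delta^2\sim \E_\pi[[Z]_T]$ is the natural scale. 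Since $U''$ is bounded, the Taylor remainder satisfies $U(H(Y))-U(H(X))=U'(H(X))(H(Y)-H(X))+O(\delta^2)$, so only the linear term needs detailed analysis.

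I next decompose
\begin{equation*}
    H(Y)-H(X)=\int_0^T \la h(t,X_{\cdot\wedge t}),\md Z_t\ra + \int_0^T \la h(t,Y_{\cdot\wedge t})-h(t,X_{\cdot\wedge t}),\md X_t\ra + R,
\end{equation*}
where $R=\int_0^T\la h(t,Y)-h(t,X),\md Z_t\ra$ is $o(\delta)$ by BDG combined with the first item of Assumption \ref{asmp-wiener}. The second item then lets me replace $h(t,Y_{\cdot\wedge t})-h(t,X_{\cdot\wedge t})$ by the forward integral $\int_0^t \MD_s^\intercal h(t,X_{\cdot\wedge t})\,\md^- Z_s$ up to an $o(\delta)$ error in the relevant $L^\gamma$.

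The heart of the proof is combining the two remaining linear contributions into the $\varphi$-formula. For $U'(H)\int_0^T\la h(t,X),\md Z_t\ra$, I would apply the Clark--Ocone formula to $U'(H)$ and exploit the cross-variation identity between It\^o integrals against the joint martingales $X$ and $Z$; this produces an integral against $\md\lt X,Z\rt_t$ with integrand that assembles into $\MD_{s+}\MD_s U(H)$ via the Malliavin chain rule $\MD_s U(H)=U'(H)\bigl[h(s,X_{\cdot\wedge s})+\int_s^T \MD_s h(t,X)^\intercal\md X_t\bigr]$. For $U'(H)\int_0^T\la \int_0^t\MD_s^\intercal h(t,X)\md^- Z_s,\md X_t\ra$, I would apply the stochastic Fubini theorem (Theorem \ref{thm-fubini}) with $\Psi_{s,t}=\MD_s^\intercal h(t,X_{\cdot\wedge t})$ and $M=Z$; the remaining items of Assumption \ref{asmp-wiener} are precisely what is needed to verify its hypotheses. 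The Fubini boundary correction $\int_0^T\la\Psi_{t,t},\md\lt X,Z\rt_t\ra_\bF$ contributes the subtraction $-U'(H)\MD_s^\intercal h(s,X_{\cdot\wedge s})$ of \eqref{eqn-sens-cont-mart}, and the swapped inner integral is handled by Proposition \ref{prop-fw}. Taking expectations and using that $Z$ is a martingale (so integration against $\md\lt X,Z\rt$ reduces the integrand to its predictable projection) yields
\begin{equation*}
    \E_\pi[U(H(Y))-U(H(X))] = \E_\pi\lb[\int_0^T\la \varphi_t,\md\lt X,Z\rt_t\ra_\bF\rb] + o(\delta).
\end{equation*}

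The final step is optimisation. Kunita--Watanabe bounds the integral by $(\int_0^T\|\varphi_t\|_\bF^2\md t)^{1/2}[Z]_T^{1/2}$, so Cauchy--Schwarz under $\E_\pi$ together with the penalty $L_\delta(\E_\pi[[Z]_T]^{1/2})$ and optimisation over $\pi$ recover the upper bound $\delta L^*(\|\varphi\|_{L^2(\mu)})=\delta\Upsilon_\M$. For the matching lower bound, I would test against the family $Y=X+\int_0^\cdot\eta_s\md X_s$, which is automatically a bicausal martingale coupling, taking $\eta_s$ proportional to $\varphi_s^\intercal$ (so that $\md\lt X,Z\rt_s$ aligns with $\varphi_s$, saturating Kunita--Watanabe) with the proportionality constant tuned so that $(\E[[Z]_T])^{1/2}=r\delta$; optimising over $r\ge 0$ reproduces $\delta L^*(\|\varphi\|_{L^2(\mu)})$. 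The main obstacle I anticipate is verifying the hypotheses of Theorem \ref{thm-fubini} (in particular the uniform forward integrability and the right-continuity assumption) uniformly along a minimising sequence of couplings $\pi$, together with the algebraic identification of the post-Fubini and post-Clark--Ocone integrand with the predictable projection in \eqref{eqn-sens-cont-mart} via careful tracking of the Malliavin chain rule and the right-limits $\MD_{s+}$ that emerge from Proposition \ref{prop-fw}.
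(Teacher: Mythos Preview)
Your strategy is essentially the paper's: apply the Lemma~\ref{lem-pen} template with $\cP=\{\pi\in\Pi_{\bc}(\mu,*):\pi(\cX\times\cdot)\in\scrM(\cX)\}$, use the Taylor expansion of $U$ and the decomposition of $H(Y)-H(X)$ (this is Lemma~\ref{lem-wiener}), then combine Theorem~\ref{thm-fubini} and Proposition~\ref{prop-fw} to arrive at $\E_\pi[\int_0^T\la\varphi_t,\md\lt X,Z\rt_t\ra_\bF]$, followed by Kunita--Watanabe for the upper bound and $Y=X+u\delta\int_0^\cdot\varphi_s\md X_s$ for the lower bound. The paper's algebra is marginally cleaner: it first applies Fubini to the combined integrand \emph{before} multiplying by $U'(H)$, collapsing everything to $U'(H)\int_0^T\la\MD_sH,\md^-Z_s\ra$ minus the boundary term, and only then invokes Proposition~\ref{prop-fw}; your split into two pieces with Clark--Ocone on one is equivalent but slightly more bookkeeping.

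Two points to correct. First, Proposition~\ref{prop-dense} is a discrete-time statement and plays no role here; for the upper bound you must work with an \emph{arbitrary} $\pi\in\cP$, not just Monge-type couplings $(\Id,\Phi)_\#\mu$, and indeed Lemma~\ref{lem-wiener} and the subsequent manipulations are written for general bicausal $\pi$---there is no density step. Your concern about verifying Theorem~\ref{thm-fubini}'s hypotheses ``uniformly along a minimising sequence'' is also misplaced: Assumption~\ref{asmp-wiener} is stated to hold for \emph{every} such $\pi$, so nothing further is needed. Second, your lower-bound coupling $Y=X+\int_0^\cdot\eta_s\md X_s$ with $\eta\propto\varphi^\intercal$ is \emph{not} automatically bicausal: it is a martingale in the joint filtration, but for bicausality via Proposition~\ref{prop-mart} you need $Y$ to have the martingale representation property, which can fail if $\Id+u\delta\varphi$ degenerates. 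The paper fixes this by approximating $\varphi$ with bounded predictable $\varphi^n$, so that for $\delta$ small the volatility $\Id+u\delta\varphi^n$ is uniformly elliptic and martingale representation holds; this approximation step is genuinely needed.
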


\begin{exam}
    \label{ex:wiener}
    Let  \(\sigma\in C^{1,2}_{b}\). Then \(h(t,\omega)= \sigma(t,\omega_{t})\) satisfies Assumption \ref{asmp-wiener}.   \
    The first condition follows from BDG inequality and the boundedness of \(\partial_{x}\sigma\).
    For the second one, we notice that \(\MD_{s}h(t,X_{\cdot\wedge t})=  \partial_{x}\sigma(t,X_{t})\1_{s\leq t}.\)
    The estimate follows from Taylor expansion and BDG inequality by noticing
    \begin{align*}
         & \lb|h(t,Y_{\cdot\wedge t})-h(t,X_{\cdot\wedge t})-\int_{0}^{t}\MD_{s}^{\intercal}h(t,X_{\cdot\wedge t}) \md^{-}(Y-X)_{s}\rb| \\
         & \qquad = | \sigma(t,Y_{t})- \sigma(t,X_{t}) -\la \partial_{x}\sigma(t,X_{t}), (Y_{t}-X_{t})\ra|
        \leq C |Y_{t}-X_{t}|^{1+\varepsilon},
    \end{align*}
    for some \(\varepsilon\in(0,1)\).
    Taking \(\gamma=2/(1+\varepsilon)\), we obtain the required estimate.
    For the third condition, we notice \(\int_{0}^{T}\MD_{s}h(t,X_{\cdot\wedge t})\md^{-} (Y-X)_{s}= \partial_{x}\sigma (t,X_{t}) (Y_{t}-X_{t})  \) and uniform forward integrablity follows from the boundedness of \(\partial_{x}\sigma\).
    The fourth condition is again satisfied by the boundedness of \(\partial_{x} \sigma\).
    The fifth condition is a consequence of the continuity and the boundedness of \(\partial_{x}\sigma\).
    The last condition follows from the fact that \(\MD_{t}H=\sigma(t,X_{t})+\int_{t}^{T} \partial_{x}\sigma(r,X_{r}) \md X_{r}\) and \(\MD_{t}\MD_{s}H =\partial_{x}\sigma(t,X_{t})+\int_{t}^{T}\partial^{2}_{x}\sigma(r,X_{r})\md X_{r}\) for \(t\geq s\).

\end{exam}

\begin{exam} We continue the theme of Example \ref{ex:Greek} and explore a nonparamteric Greek for option pricing in a continuous time setting. Suppose the price process follows
    \begin{equation*}
        \md S_{t}= S_{t}\sigma(t,X_{t})\md X_{t},
    \end{equation*}
    where \(X\) is a Brownian motion under the reference measure \(\mu\) and \(\sigma\in C^{1,2}_{b}\).
    The payoff of a log contract is  given by \(-\log(S_{T}/S_{0})\) and It\^o's formula gives its price as
    \begin{equation*}
        \E_{\mu}[-\log(S_{T}/S_{0})]=\E_{\mu}\lb[\frac{1}{2}\int_{0}^{T}\sigma(t,X_{t})^{2}\md [X]_{t}\rb]=\E_{\mu}\lb[\frac{1}{2}\lb(\int_{0}^{T}\sigma(t,X_{t})\md X_{t}\rb)^{2}\rb].
    \end{equation*}
    We take \(L=+\infty\1_{(1,\infty)}\) and applying Theorem \ref{thm-cont-mart}, noting its assumptions are satisfied by Example \ref{ex:wiener},
    calculate the nonparametric `Greek' of a log contract as
    \begin{equation*}
        \Upsilon_{\M}=\E_{\mu}\lb[\int_{0}^{T} |\varphi_{t}|^{2} \md t\rb]^{1/2},
    \end{equation*}
    where
    \begin{align*}
        \varphi_{s} & =\prescript{\mathrm{p}}{}{  \lb\{\MD_{s+}\MD_{s}\lb(\frac{1}{2}\lb(\int_{0}^{T}\sigma(t,X_{t})\md X_{t}\rb)^{2}\rb)-\int_{0}^{T}\sigma(t,X_{t})\md X_{t} \partial_{x}\sigma(s, X_{s}) \rb\}} \\
                    & = \sigma(s,X_{s})^{2} + \int_{s}^{T}\E_{\mu}\lb[\lb(\partial_{x}\sigma(t,X_{t})\rb)^{2}+ \sigma(t,X_{t}) \partial_{x}^{2}\sigma(t,X_{t})\Big|\cF_{s}\rb] \md t.
    \end{align*}
    If \(\sigma\) is constant then the price of the log contract is given by \(p(\sigma):=\frac{1}{2}\sigma^2 T\) while the above formula gives \( \Upsilon_{\M}=\sigma^2 \sqrt{T}\). The
    difference in scaling in time comes from two factors: one is similar to what we saw in Example \ref{ex:merton} above, the other is due \(\sigma\) being the annualized volatility, while \(\Upsilon_{\M}\) is not annualized.
    To wit, note that a \(\delta\) perturbation of the underlying Brownian motion, \(X\) to \((1+\delta)X\) corresponds to \(\sqrt{T}\delta\) causal perturbation whilst changing the log contract price from \(p(\sigma)\) to \(p(\sigma(1+\delta))\). Thus, the sensitivity to log contract price corresponding to \(\Upsilon_{\M}\) is
    \[ \lim_{\delta \to 0} \frac{1}{2}\frac{\sigma^2(1+\delta/\sqrt{T})^2 T - \sigma^2T}{\delta} = \sigma^2 \sqrt{T}.\]
    In general, taking time scaling into the account, we would expect \(p'(\sigma)\leq  \frac{\sqrt{T}}{\sigma} \Upsilon_\M \) and the fact that we actually have equality shows that, up to the first order, the worst adversarial change to the dynamics comes from a constant shift to \(\sigma\). Finally, we note that, as in Example \ref{ex:merton}, we could take \(L=+\infty\1_{(\sqrt{T}/\sigma,\infty)}\) leading to \( \Upsilon_{\M}=\sigma T=  p'(\sigma)\).
\end{exam}

\subsection{Extensions and further results}\label{sec:further}

\subsubsection*{Filtrations in discrete time}

In Theorem \ref{thm-sens}, we work with the canonical filtration \((\cF_{t})_{t\in I}\). This was taken for ease of notation and to mimic the continuous time results.
We can extend our result to any enlarged filtration \((\tilde{\cF}_{t})_{t\in I}\), and following the same lines of arguments the sensitivity is again given by  \(\Upsilon=L^{*}\lb(\|\op\mD f\|_{L^{q}(\mu)}\rb)\), where the optional projection is with respect to \((\tilde{\cF}_{t})_{t\in I}\).
The notion of causality introduced in Section \ref{sec-pre} is easily transferred to this setting, see \citet{acciaioCausalOptimalTransport2020} for more details.
In particular, if we equip the reference model with the largest filtration, i.e., \(\tilde{\cF_{t}}=\cF\) for any \(t\in I\), then any coupling is a causal coupling.
Under such an extended setting, we retrieve the static Wasserstein sensitivity \citep{bartlSensitivityAnalysisWasserstein2021} as a specific corollary of Theorem~\ref{thm-sens}.

\subsubsection*{Weak OT objective in discrete time}
Another extension is to consider  an optimal stopping problem
\begin{equation*}
    V_{\os} (0)=\sup_{\tau\in\cT}\E_{\mu}[g(X_{\tau})],
\end{equation*}
and its distributionally robust counterpart
\begin{equation*}
    V_{\os}(\delta)=\sup_{\nu\in \scrP(\cX)}\lb\{\sup_{\tau\in\cT}\E_{\nu}[g(X_{\tau})]-L_{\delta}(d_{\bc}(\mu,\nu)^{1/p})\rb\},
\end{equation*}
where \(\cT\) is the set of \((\cF_{t})\)--stopping time.
The dual representation for \(V_{\os}(\delta)\) is discussed in \cite{jiang24Duality}.
For simplicity, we focus on the two-period case, and by Snell's envelope we notice
\begin{equation*}
    \sup_{\tau\in\cT}\E_{\nu}[g(X_{\tau})]=\E_{\nu}\lb[\max\{g(X_{1}),\E_{\nu}[g(X_{2})|X_{1}]\}\rb]=\E_{\nu}[f(X_{1},\Law(X_{2}|X_{1}))].
\end{equation*}
In this case, the objective \(f:\cX_{1}\times\scrP(\cX_{2})\to\bbR\) is a functional  not only of the state but also of the conditional law of the state. Such problems are referred to as Weak OT and were studied in \cite{gozlan2017kantorovich}. For this two-period setting, it can be shown that
\begin{equation*}
    \Upsilon_{\os}:=\lim_{\delta\to 0}(V_{\os}(\delta)-V_{\os}(0))/\delta=L^{*}\lb(\| (\partial_{x}f,\partial_{\mu}f)\|_{L^{q}(\mu)}\rb ),
\end{equation*}
where \(\partial_{\mu}\) is Lion's derivative, suitably adjusted to the change of coordinates by \(\Delta\). However, to the best of our knowledge, the general \(N\)-period case remains open.

\subsubsection*{Semi-martingale ambiguity in continuous time}
Our results can be extended to a framework where the ambiguity set is the set of semi-martingales around the reference model.
In order to allow both the drift and the volatility ambiguity, we need to decompose the semi-martingale into its finite-variation and its martingale part.
For a process \(X\), we write \(X=X^{a}+X^{m}\)  for its Doob's decomposition.
We adopt the objective in Theorem \ref{thm-sens-mart}:
\begin{equation*}
    f(X)= U(H), \quad \text{where} \quad H=\int_{0}^{T}\la h(t,X_{\cdot\wedge t}), \md X_{t}\ra .
\end{equation*}
Let \(d=1\), \(p=2\), \(\mu\) be the Wiener measure and consider the DRO problem given by
\begin{equation*}
    V_{\mathrm{S.Mart}}(\delta)=\sup_{\nu\in  B_{\delta}} \E_{\nu}[U(H)],
\end{equation*}
where \(B_{\delta}=B^{a}_{\delta}\cap B^{m}_{\delta}\) is the intersection of two bicausal balls given by
\begin{equation*}
    B^{a}_{\delta}=\lb\{\nu\in \scrS(\cX): \inf_{\pi\in\Pi_{\bc}(\mu,\nu)}\E_{\pi}\lb[\|X^{a}-Y^{a}\|_{W_{0}^{1,2}}\rb] \leq \delta\rb\}
\end{equation*}
and
\begin{equation*}
    B^{m}_{\delta}= \lb\{\nu\in\scrS(\cX):\inf_{\pi\in\Pi_{\bc}(\mu,\nu)} E_{\pi}\lb[[X^{m}-Y^{m}]_{T}\rb]^{1/2} \leq \delta\rb\}.
\end{equation*}
Here \(\scrS(\cX)\) denotes the set of semi-martingale measures.
In order to apply Theorem \ref{thm-cont}, a pathwise definition of the It\^o integral is needed.
For instance, we  can adapt the approach in Example \ref{exam-rough}, and consider the objective of the form of
\begin{equation*}
    f(\omega)=U\lb(\int_{0}^{T}g'(\omega_{t})\md \omega_{t}\rb)= U\lb(g(\omega_{T})-g(0) -\frac{1}{2}\int_{0}^{T}g''(\omega_{s})\md[\omega]_{s}\rb).
\end{equation*}
We claim that
\begin{align}
    \label{eqn-fo}
     & \E_{\pi}[f(Y)-f(X)] \nonumber                                                                                                                                                    \\
     & = \E_{\pi}[f(X+ (Y-X)^{a} +(Y-X)^{m})-f(X)] \nonumber                                                                                                                            \\
     & = \E_{\pi}[f(X+(Y-X)^{a})-f(X)] + \E_{\pi}[f(X+(Y-X)^{m})-f(X)]+o(\delta) \nonumber                                                                                              \\
     & = \E_{\pi}\lb[\int_{0}^{T}\lb\la \op\mD f_{t}(X), \md (Y-X)_{t}^{a}  \rb\ra \rb] + \E_{\pi}\lb[\int_{0}^{T}\lb \la \varphi_{t}, \md \lt X,(Y-X)^{m} \rt_{t}\rb\ra\rb]+o(\delta),
\end{align}
where  \(\varphi\) is given in \eqref{eqn-sens-cont-mart}, and where the second equality follows by controlling the higher order terms combining the arguments in the proofs of Theorems \ref{thm-cont} and \ref{thm-cont-mart}, and we omit the details. It follows that
\begin{equation}
    \label{eqn-sens-sm}
    \Upsilon_{\mathrm{S.Mart}}=\Upsilon+\Upsilon_{\M}.
\end{equation}
This result allows us to detail the relation between our results and the recent work of \citet{bartl2023sensitivity}. Therein, the authors consider $L^p$-balls around the drift and the volatility of the reference model under a strong formulation.
We adapted their results to the semi-martingale framework as follows.
Let   \((\Omega,\cF,(\cF_{t}),P)\) be a probability space supporting a Brownian motion \(W\).
Consider a DRO problem given by
\begin{equation*}
    \widetilde{V}_{\mathrm{S.Mart}}(\delta)= \inf_{h\in\scrH}\sup_{(b,\sigma)\in \widetilde{B}_{\delta}}\E_{P}\lb[U\lb(\int_{0}^{T}h_{t}\md X^{b,\sigma}_{t}\rb)\rb],
\end{equation*}
where \(\scrH\) is a set of predictable open-loop controls, \(X_{t}^{b,\sigma}=\int_{0}^{t}b_{s}\md s+ \int_{0}^{t}\sigma_{s}\md W_{s}\), and
\begin{equation*}
    \widetilde{B}_{\delta}=\lb\{(b,\sigma):  \E_{P}\lb[\int_{0}^{T}|b_{t}-\bar b_{t}|^{p}\md t\rb]^{1/p}\leq \delta,  \E_{P}\lb[\lb(\int_{0}^{T}|\sigma_{t}-\bar\sigma_{t}|^{2}\md t \rb)^{p/2}\rb]^{1/p}\leq  \delta\rb\},
\end{equation*}
thus an intersection of $L^p$-balls around the drift and volatility coefficients. \citet{bartl2023sensitivity} show that
\begin{equation}
    \label{eqn-sens-cont-mart-bartl}
    \widetilde{\Upsilon}_{\mathrm{S.Mart}}=\E_{P}\lb[\lb(\int_{0}^{T}|Y_{t}h^{*}_{t}|_{*}^{q}\md t\rb)\rb]^{1/q}+\E_{P}\lb[\lb(\int_{0}^{T}| Z_{t}h^{*}_{t} |^{2}\md t \rb)^{q/2}\rb]^{1/q},
\end{equation}
where \(h^{*}\) is the unique optimal control of the reference model and \((Y,Z)\) is the solution to the BSDE:
\begin{equation*}
    Y_{t} = U'\lb(\int_{0}^{T}h^{*}_{t}\md X_{t}^{\bar b,\bar\sigma}\rb)-
    \int_{t}^{T}Z_{s}\md W_{s},\quad t\in [0,T].
\end{equation*}
Notice that in our framework we do not specify a probability space, and hence we only consider feedback (closed-loop) controls \(h\).
The intersection between the two settings is obtained taking \(\bar b=0\), \(\bar\sigma=\Id\) and \(\scrH=\{h\}\) for some deterministic control \(h\).
While \eqref{eqn-sens-cont-mart-bartl} was obtained only for \(p>3\), we find that \eqref{eqn-sens-cont-mart-bartl} coincides with \eqref{eqn-sens-sm} by plugging \(p=2\).
Roughly speaking, this indicates that the volatility ball \(\widetilde{B}_{\delta}\), while more rigid, is actually equivalent to the bicausal ball \(B_{\delta}\) up to the first order approximation.

\begin{rmk} Above, we took an intersection of a drift ball and a volatility ball to obtain a direct comparison with \citet{bartl2023sensitivity}. However, from the causal-OT point of view, it is more natural to allow a trade-off between the two types of perturbations and to combine them into one cost function. This leads us to consider the bicausal discrepancy given by
    \begin{equation*}
        d_{\bc}(\mu,\nu):= \inf_{\pi\in \Pi_{\bc}(\mu,\nu)}\E_{\pi}\lb[ \|X^{a}-Y^{a}\|_{W^{1,2}_{0}}^{2}+[X^{m}-Y^{m}]_{T}   \rb].
    \end{equation*}
    The corresponding DRO problem is given by
    \begin{equation*}
        V_{\mathrm{S.Mart}}(\delta) = \sup_{\nu\in \scrS(\cX)} \lb\{\E_{\nu}[U(H)]-L_{\delta}(d_{\bc}(\mu,\nu)^{1/2})\rb\}.
    \end{equation*}
    For simplicity, consider \(L= +\infty\1_{(1,\infty)}\) as above.
    Combing the first order approximation \eqref{eqn-fo} with the Cauchy inequality, in this setting we obtain
    \begin{equation*}
        \Upsilon_{\mathrm{S.Mart}}= \sqrt{\Upsilon^{2}+\Upsilon_{\M}^{2}}.
    \end{equation*}
\end{rmk}

\subsection{A general strategy of the proof}
\label{sec-proof}
We present a key elementary lemma which provides a unified framework for the proofs.
An upper bound of the sensitivity is given by estimates \eqref{eqn-lem-growth} and \eqref{eqn-lem-ub}.
A lowerbound of the sensitivity is a consequence of the estimate \eqref{eqn-lem-lb}.
In the following proofs, we will verify  all three estimates respectively in each case.
The growth estimate \eqref{eqn-lem-growth} will follow from the growth assumption of \(f\).
Estimate \eqref{eqn-lem-ub} is an asymptotic estimate when the cost is small and is will be derived from the H\"older inequality or the Kunita--Watanabe inequality.
For estimate \eqref{eqn-lem-lb}, we will construct a sequence of couplings which attain the inequality in \eqref{eqn-lem-ub} asymptotically.

\begin{lem}
    \label{lem-pen}
    Let \(\cP \subseteq \Pi(\mu,*)\) be a given set of couplings and \(c\) a cost such that the following conditions hold: there exists a constant \(C\) such that for any \(\pi\in \cP\)
    \begin{equation}
        \label{eqn-lem-growth}
        \E_{\pi}[f(Y)]\leq C (1 + \E_{\pi}[c(X,Y)]),
    \end{equation}
    there exists \(r\) such that for any \(\pi\in \cP\)
    \begin{equation}
        \label{eqn-lem-ub}
        \E_{\pi}[f(Y)-f(X)]\leq r \E_{\pi}[c(X,Y)]^{1/p} + o(\E_{\pi}[c(X,Y)]^{1/p}),
    \end{equation}
    and for all \(\delta>0\) small enough, there exists \(\pi_{\delta}\in\cP\) such that \(\E_{\pi_{\delta}}[c(X,Y)]\leq u^{p}\delta^{p}\) and
    \begin{equation}
        \label{eqn-lem-lb}
        \E_{\pi_{\delta}}[f(Y)-f(X)]\geq ru \delta + o(\delta),
    \end{equation}
    where \(u\) is given by \(ur-L(u)=L^*(r)\).
    Then under Assumption \ref{asmp-pen}, we have
    \begin{equation*}
        \sup_{\pi\in \cP} \lb\{\E_{\pi}[f(Y)-f(X)]- L_{\delta}(\E_{\pi}[c(X,Y)]^{1/p}) \rb\} = L^{*}(r) \delta +o(\delta).
    \end{equation*}
\end{lem}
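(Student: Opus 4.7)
The plan is to prove matching upper and lower bounds. For the upper bound, the objective is
$$\Phi(\pi,\delta) := \E_{\pi}[f(Y)-f(X)] - L_{\delta}\lb(\E_{\pi}[c(X,Y)]^{1/p}\rb),$$
and I would first argue that only couplings with $a := \E_{\pi}[c(X,Y)]^{1/p} = O(\delta)$ can contribute to the sup. Writing $u := a/\delta$, the growth condition \eqref{eqn-lem-growth} gives $\E_{\pi}[f(Y)-f(X)] \le C'(1+ a^{p}) = C'(1+u^{p}\delta^{p})$ (absorbing the constant $\E_{\mu}[f(X)]$ into $C'$), while Assumption \ref{asmp-pen} yields $L(u)/u^{p}\to\infty$, hence for any $M$ we can pick $K = K(M)$ with $L(u) \ge M u^{p}$ for $u\ge K$. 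Choosing $M$ large compared to $C'$ and $\delta$ small, $L_{\delta}(a) = \delta L(u) \ge M\delta u^{p}$ dominates $C'(1+u^{p}\delta^{p})$ whenever $u\ge K$, so $\Phi(\pi,\delta)\le 0$ for such $\pi$. This reduces the supremum to $\pi\in\cP$ with $a\le K\delta$.

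For such restricted $\pi$, I would apply \eqref{eqn-lem-ub} and the Fenchel--Young inequality: since the Landau symbol $o(a)$ is controlled by a \emph{deterministic} rate $\ell(a)\to 0$ (independent of $\pi$), for $a\le K\delta$ we have $o(a) \le \ell(K\delta)\cdot K\delta = o(\delta)$ uniformly. Therefore
$$\Phi(\pi,\delta) \le ra + o(a) - \delta L(a/\delta) = \delta\lb(r(a/\delta) - L(a/\delta)\rb) + o(\delta) \le L^{*}(r)\delta + o(\delta).$$
Taking the supremum over $\pi$ gives the desired upper bound. For the lower bound I would plug in the sequence $\pi_{\delta}$ furnished by \eqref{eqn-lem-lb}. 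Its cost satisfies $\E_{\pi_{\delta}}[c(X,Y)]^{1/p}\le u\delta$, so by monotonicity of $L$ from Assumption \ref{asmp-pen},
$$L_{\delta}\lb(\E_{\pi_{\delta}}[c(X,Y)]^{1/p}\rb) = \delta L\lb(\E_{\pi_{\delta}}[c(X,Y)]^{1/p}/\delta\rb) \le \delta L(u),$$
and combining with \eqref{eqn-lem-lb} yields $\Phi(\pi_{\delta},\delta)\ge ru\delta - \delta L(u) + o(\delta) = L^{*}(r)\delta + o(\delta)$, using the defining identity $ur - L(u) = L^{*}(r)$. Taking $\sup$ finishes the proof.

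The main (minor) obstacle is the uniform coercivity argument in the first step: one needs the constant $C$ in \eqref{eqn-lem-growth}, the superlinear constant in Assumption~\ref{asmp-pen}, and the rate function $\ell$ in \eqref{eqn-lem-ub} to all be independent of $\pi$, so that the cutoff $K$ and the $o(\delta)$ bound can be chosen uniformly. This is exactly what the hypotheses guarantee, and the paper's convention that Landau symbols are deterministic (stated in the Notations subsection) is what makes the passage from $o(a)$ pointwise in $\pi$ to a uniform $o(\delta)$ over the restricted class $\{a\le K\delta\}$ automatic rather than needing a separate argument.
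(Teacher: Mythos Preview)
Your overall strategy---coercivity to localize, then Fenchel--Young for the upper bound, and plugging in $\pi_\delta$ for the lower bound---matches the paper, and your lower-bound argument is correct. However, the one-step restriction to $a:=\E_\pi[c(X,Y)]^{1/p}\le K\delta$ with a \emph{fixed} $K$ does not work. Your domination claim is that $M\delta u^p \ge C'(1+u^p\delta^p)$ for $u=a/\delta\ge K$; rearranging gives $u^p\,\delta(M-C'\delta^{p-1})\ge C'$, and at $u=K$ the left side is $\sim K^pM\delta\to 0$ as $\delta\to 0$, while the right side is the fixed constant $C'>0$. So the inequality fails for small $\delta$: the penalty $L_\delta(a)=\delta L(a/\delta)$ is only $O(\delta)$ at scale $a/\delta=K$ and cannot kill the $O(1)$ constant coming from the growth bound \eqref{eqn-lem-growth}. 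The best you get from \eqref{eqn-lem-growth} alone is a restriction to $a=O(\delta^{(p-1)/p})$, and then the $o(a)$ term in \eqref{eqn-lem-ub} becomes $o(\delta^{(p-1)/p})$, which is not $o(\delta)$.

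The paper closes this gap with a two-stage localization. First, \eqref{eqn-lem-growth} together with $L_\delta(a)\ge C_1\delta^{1-p}a^p$ (note $\delta^{1-p}\ge 1$ for $\delta<1$, $p>1$) restricts to $a\le M_1$ with $M_1$ \emph{independent of $\delta$}. On this bounded set, \eqref{eqn-lem-ub} upgrades the bound to $\E_\pi[f(Y)-f(X)]\le C_2\,a$, linear in $a$ rather than $a^p$. Only then does comparing $C_2 a$ with $L_\delta(a)\ge C_2\delta^{1-p}a^p$ yield $\Phi(\pi,\delta)<0$ for $a>M_2\delta$, so that the supremum can be restricted to $a\le M_2\delta$ and your Fenchel--Young step goes through with a uniform $o(\delta)$. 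Your proposal is essentially missing this intermediate use of \eqref{eqn-lem-ub} to pass from an $O(a^p)$ to an $O(a)$ bound before making the $O(\delta)$ cut.
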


\begin{proof}
    Since \(L\) satisfies \(\liminf_{u\to \infty}\frac{L(u)}{u^{p}}=+\infty\), there exists \(M_{1}\) and \(C_1>C\) such that \(L_{\delta}(u)>C_{1}\delta^{1-p} u^{p}\) for any \(\delta <1\) and \(u> M_{1}\).
    Combined with \eqref{eqn-lem-growth}, we see that for any \(\delta<1\) we can restrict to measures with uniformly bounded costs
    \begin{align}
        \sup_{\pi\in \cP} & \lb\{\E_{\pi}[f(Y)-f(X)]- L_{\delta}(\E_{\pi}[c(X,Y)]^{1/p}) \rb\}\label{eq:keylemma_valueeq1}    \\
                          & =  \sup_{\pi\in \cQ} \lb\{\E_{\pi}[f(Y)-f(X)]- L_{\delta}(\E_{\pi}[c(X,Y)]^{1/p}) \rb\},\nonumber
    \end{align}
    where \(\cQ=\cP\cap \{\pi:\E_{\pi}[c(X,Y)]\leq M_{1}\}\).
    By \eqref{eqn-lem-ub}, for \(\pi\in\cQ\) we have
    \begin{equation*}
        \E_{\pi}[f(Y)-f(X)]\leq C_2 \E_{\pi}[c(X,Y)]^{1/p},
    \end{equation*}
    for some constant $C_2$.
    Again by the growth assumption of \(L\), there exists \(M_{2}>1\) such that \(L_{\delta}(u)>C_{2}\delta^{1-p} u^{p}\) for any \(\delta <1\) and \(u> M_{2}\).
    Then on the set of \(\{\pi\in \cQ: \E_{\pi}[c(X,Y)]>M_{2}^{p}\delta^{p}\}\) it holds that
    \begin{align*}
        \E_{\pi} & [f(Y)-f(X)]- L_{\delta}(\E_{\pi}[c(X,Y)]^{1/p})
        \leq C_{2}\E_{\pi}[c(X,Y)]^{1/p}-C_{2}\delta^{1-p}\E_{\pi}[c(X,Y)]                        \\
                 & \leq  C_{2}\E_{\pi}[c(X,Y)]^{1/p}[1- (\E_{\pi}[c(X,Y)]^{1/p}/\delta)^{p-1}]<0.
    \end{align*}
    Therefore, by taking \(\cP_{\delta}=\{\pi\in\cQ: \E_{\pi}[c(X,Y)]\leq M_{2}^{p}\delta^{p}\}\), we obtain the desired estimate from \eqref{eqn-lem-ub}
    \begin{align*}
        \sup_{\pi\in \cP} & \lb\{\E_{\pi}[f(Y)-f(X)]- L_{\delta}(\E_{\pi}[c(X,Y)]^{1/p}) \rb\}                                                         \\
        =                 & \sup_{\pi\in \cP_{\delta}} \lb\{\E_{\pi}[f(Y)-f(X)]- L_{\delta}(\E_{\pi}[c(X,Y)]^{1/p}) \rb\}                              \\
                          & \leq \sup_{\pi\in \cP_{\delta}} \lb\{ r \E_{\pi}[c(X,Y)]^{1/p} - \delta L(\E_{\pi}[c(X,Y)]^{1/p}/\delta) + o (\delta)\rb\}
        \leq L^{*}(r)\delta +o(\delta).
    \end{align*}

    On the other hand, taking \(u\) such that  \(ur-L(u)=L^*(r)\) and \(\pi_{\delta}\) in \eqref{eqn-lem-lb}, we obtain the other inequality, and hence the desired equality:
    \begin{align*}
        \sup_{\pi\in \cP_{\delta}} & \lb\{\E_{\pi}[f(Y)-f(X)]- L_{\delta}(\E_{\pi}[c(X,Y)]^{1/p}) \rb\}
        \geq  r u\delta  - \delta L(u) +o(\delta)=\delta L^*(r)+o(\delta).
    \end{align*}
\end{proof}

\section{Proofs of discrete-time results}
\label{sec-discrete}

\begin{proof}[Proof of Theorem \ref{thm-sens}]
    It is clear by definition that \begin{equation*}
        V(\delta)=\sup_{\pi\in\Pi_{\c}(\mu,*)}\lb\{\E_{\pi}[f(Y)]-L_{\delta}(\E_{\pi}[c_{N}(X,Y)]^{1/p})\rb\}.
    \end{equation*}
    By Lemma \ref{lem-pen} it suffices to verify \eqref{eqn-lem-growth}, \eqref{eqn-lem-ub} and \eqref{eqn-lem-lb} with \(\cP=\Pi_{\c}(\mu,*)\) and
    \begin{equation*}
        r=\E_{\mu}\lb[\sum_{n=1}^{N}\bigl|\E_{\mu}[\mD_{n} f(X)|\cF_{n}]\bigr|_{*}^{q}\rb]^{1/q} =\|\op\mD f\|_{L^{q}(\mu)}.
    \end{equation*}
    We derive \eqref{eqn-lem-growth} by noticing \(f(y)\leq C(1+|x|^{p}+|x-y|^{p})\).
    Let \(\pi\in\Pi_{\c}(\mu,*)\), and we notice by H\"older inequality
    \begin{align*}
        \quad\E_{\pi}[f(Y)-f(X)] & = \int_{0}^{1}\E_{\pi}[\la \mD f(X+\lambda(Y-X)) , \Delta Y-\Delta X\ra]\md \lambda               \\
                                 & =\int_{0}^{1}\E_{\pi}[\la \op\mD f(X+\lambda(Y-X)) , \Delta Y-\Delta X\ra]\md \lambda             \\
                                 & \leq \E_{\pi}[c_{N}(X,Y)]^{1/p}\int_{0}^{1} \|\op\mD f(X+\lambda(Y-X))\|_{L^{q}(\pi)}\md \lambda.
    \end{align*}
    In order to verify \eqref{eqn-lem-ub}, we need to show that for any sequence \(\pi_{n}\) with \(\lim_{n\to\infty}\E_{\pi_{n}}[c_{N}(X,Y)]=0\), it holds that
    \begin{equation}\label{eq:Proof43_secondcond}
        \limsup_{n\to\infty} \int_{0}^{1} \|\op\mD f(X+\lambda(Y-X))\|_{L^{q}(\pi_{n})}\md \lambda\leq \|\op \mD f(X)\|_{L^{q}(\mu)}.
    \end{equation}
    Notice that \(\lim_{n\to\infty}\E_{\pi_{n}}[c_{N}(X,Y)]=0\) implies the convergence of \(\pi_{n}\) to the identical coupling \(\widehat{\pi}=(\Id,\Id)_{\#} \mu\) in \(p\)-Wasserstein distance.
    Therefore, by Assumption \ref{asmp-loss} and Jensen inequality, for any \(\lambda\in [0,1]\), we obtain
    \begin{align*}
        \limsup_{n\to \infty} & \|\op \mD f(X+\lambda(Y-X))-\op\mD f(X)\|_{L^{q}(\pi_{n})}                 \\
        \leq                  & \limsup_{n\to \infty} \| \mD f(X+\lambda(Y-X))-\mD f(X)\|_{L^{q}(\pi_{n})} \\
                              & = \|\mD f(X+\lambda(Y-X  ))-\mD f(X  )\|_{L^{q}(\widehat{\pi})}=0.
    \end{align*}
    Assumption \ref{asmp-loss} and dominated convergence theorem now give the desired inequality:
    \begin{align*}
        \limsup_{n\to\infty}\int_{0}^{1} \|\op\mD f(X+\lambda(Y-X))\|_{L^{q}(\pi_{n})}\md \lambda & \leq \limsup_{n\to\infty}\|\op\mD f(X)\|_{L^{q}(\pi_{n})} \\                      & = \|\op\mD f\|_{L^{q}(\mu)},
    \end{align*}
    where in the last equality, we used the fact that \(\E_{\pi_{n}}[\mD_{n}f(X)|\cF_{n}\otimes \cG_{n}]=\E_{\mu}[\mD_{n}f(X)|\cF_{n}]\) since \(\pi_{n}\in \Pi_{\c}(\mu,*)\).

    It remains to verify \eqref{eqn-lem-lb}. We introduce \(v:\bbR^{d}\to\bbR^{d}\) given by
    \begin{equation}\label{eq:proofdiscrete_funct_v}
        v(e)= (e_{1}|e_{1}|^{q-2},\dots, e_{d}|e_{d}|^{q-2}).
    \end{equation}
    We fix \(u>0\) and let \(\pi_{\delta}=(\Id,\Id + u \delta \Delta^{-1}\circ \Phi)_{\#}\mu\) where
    \begin{equation}\label{eq:Phindef_discrete}
        \Phi_{n}(X)=v(\E_{\mu}[\mD_{n}f(X)|\cF_{n}]), \text{ for } n=1,\dots, N.
    \end{equation}
    By construction, \(\pi_{\delta}\in\Pi_{\c}(\mu,*)\). We compute, using $pq-p=q$,
    \begin{equation*}
        \E_{\pi_{\delta}}[c_{N}(X,Y)]= u^{p}\delta^{p}\E_{\mu}\lb[\sum_{n=1}^{N}|\Phi_{n}(X)|^{p}\rb]=\lb(u \|\op\mD f\|_{L^{q}(\mu)}^{q/p}\rb)^{p}\delta^{p}.
    \end{equation*}
    On the other hand, by the fundamental theorem of calculus and the definition of \(\mD\) in \eqref{eq:def_discrete_MD_1}, we have
    \begin{align*}
        \E_{\pi_{\delta}}[f(Y)-f(X)] & =\E_{\mu}[f(X+ u\delta\Delta^{-1} \circ \Phi(X))-f(X)]                                                      \\
                                     & = u\delta\int_{0}^{1}\E_{\mu}[\la\mD f(X+\lambda u \delta\Delta^{-1}\circ \Phi(X)), \Phi(X)\ra]\md \lambda.
    \end{align*}
    Using the assumed growth estimates we can apply dominated convergence theorem, and taking conditional expectations, we see that the term under the integral, for any fixed \(\lambda\), converges to \(\|\op \mD f \|_{L^{q}(\mu)}^q \) as \(\delta\to 0\), and hence
    \begin{align*}
        \E_{\pi_{\delta}}[f(Y)-f(X)] =  \|\op \mD f \|_{L^{q}(\mu)} \lb(u \|\op\mD f\|_{L^{q}(\mu)}^{q/p}\rb)\delta +o(\delta).
    \end{align*}
    If \(r=\|\op\mD f\|_{L^{q}(\mu)}>0\) then above estimate is equivalent to \eqref{eqn-lem-lb}, where we take \(u\) such that \(L^*(r)=ur-L(u)\).
    If \(\|\op\mD f\|_{L^{q}(\mu)}=0\), then the first part has already implied the sensitivity \(\Upsilon=L^{*}(0)=0\).
\end{proof}

\begin{proof}[Proof of Theorem \ref{thm-sens-mart}]
    Notice that
    \begin{equation*}
        V_{\M}(\delta)=\sup_{\pi\in \Pi_{\bc}(\mu,*), \pi(\cX,\cdot)\in\scrM(\cX)}\lb\{\E_{\pi}[f(Y)]-L_{\delta}(\E_{\pi}[c_{N}(X,Y)^{1/p}])\rb\}.
    \end{equation*}
    By Lemma \ref{lem-pen}, it suffices to verify \eqref{eqn-lem-growth}, \eqref{eqn-lem-ub} and \eqref{eqn-lem-lb} with \(\cP=\{\pi\in \Pi_{\bc}(\mu,*): \pi(\cX\times\cdot)\in\scrM(\cX)\}\) and
    \begin{equation*}
        r=  \inf_{h\in\scrH}\E_{\mu}\lb[\sum_{n=1}^{N}\bigl|\E_{\mu}[\bbD_{n} f|\cF_{n}]-h_{n}\bigr|_{*}^{q}\rb]^{1/q}=\inf_{h\in\scrH}\|\op\mD f-h\|_{L^{q}(\mu)},
    \end{equation*}
    where \(\scrH\) is the set of predictable functionals in \(L^{q}\).
    Estimate \eqref{eqn-lem-growth} follows directly from the unconstrained case.
    Let \(\pi\in\cP\) and \(h\in \scrH\cap C^{2}_{b}\).
    We notice by H\"older inequality
    \begin{align*}
        \quad\E_{\pi}[f(Y)-f(X)] & = \int_{0}^{1}\E_{\pi}[\la \mD f(X+\lambda(Y-X)) , \Delta Y-\Delta X\ra]\md \lambda                     \\
                                 & =\int_{0}^{1}\E_{\pi}[\la \op\mD f(X+\lambda(Y-X)) -h(X), \Delta Y-\Delta X\ra]\md \lambda              \\
                                 & \leq \E_{\pi}[c_{N}(X,Y)]^{1/p}\int_{0}^{1} \|\op\mD f(X+\lambda(Y-X))- h(X)\|_{L^{q}(\pi)}\md \lambda.
    \end{align*}
    Here, the second equality follows from \(\pi\in \Pi_{\bc}(\mu,*)\) and Proposition \ref{prop-mart}. Following the arguments used in the proof of Theorem \ref{thm-sens}, noting \(h\) is bounded, we derive
    \begin{equation*}
        \E_{\pi}[f(Y)-f(X)]\leq \E_{\pi}[c_{N}(X,Y)]^{1/p} \|\op\mD f- h\|_{L^{q}(\mu)} + o(\E_{\pi}[c_{N}(X,Y)]^{1/p}).
    \end{equation*}
    We verify \eqref{eqn-lem-ub} by noticing
    \begin{equation*}
        \inf_{h\in\scrH \cap C^{2}_{b}}\|\op\mD f-h\|_{L^{q}(\mu)}=\inf_{h\in \scrH}\|\op\mD f- h\|_{L^{q}(\mu)}.
    \end{equation*}
    We turn to showing \eqref{eqn-lem-lb}. For \(h^{*}\) we denote the \(L^{q}\) predictable projection of \(\op\mD f\), i.e., the unique optimizer of
    \begin{equation}
        \label{eqn-hstar}
        \inf_{h\in\scrH}J[h]=\inf_{h\in\scrH}\|\op\mD f - h\|_{L^{q}(\mu)}^{q}.
    \end{equation}
    We use $v$ as defined in \eqref{eq:proofdiscrete_funct_v} and define \(\Phi=(0,\Phi_{1},\dots,\Phi_{N}):\cX\to\cX\) where
    \begin{equation*}
        \Phi_{n}(X)=v(\E_{\mu}[\mD_{n}f(X)|\cF_{n}]- h^{*}_{n}) \text{ for } n=1,\dots, N.
    \end{equation*}
    The first variation of \eqref{eqn-hstar} yields for any  \(g\in\scrH\)
    \begin{equation*}
        \delta J[h^{*}](g)=\lim_{\varepsilon\to 0}\frac{1}{\varepsilon}(J[h^{*}+\varepsilon g]-J[h^{*}])=q\E_{\mu}[\la \Phi(X), g\ra]\geq 0,
    \end{equation*}
    which implies that
    \begin{equation}
        \label{eqn-EL}
        \E_{\mu}[\Phi_{n}(X)|\cF_{n-1}]=0.
    \end{equation}
    We fix \(u>0\) and let \(\pi_{\delta}=(\Id,\Id+ u\delta \Delta^{-1}\circ \Phi(X))_{\#}\mu\). A direct computation yields
    \begin{equation*}
        \E_{\pi_{\delta}}[c_{N}(X,Y)]= \lb(u \|\op\mD f -h^{*}\|_{L^{q}(\mu)}^{q/p}\rb)^{p} \delta^{p}.
    \end{equation*}
    On the other hand, we have
    \begin{align*}
        \E_{\pi_{\delta}}[f(Y)-f(X)] & =E_{\mu}[f(X+ u \delta\Delta^{-1} \circ \Phi(X))-f(X)]                                                       \\
                                     & = u\delta \int_{0}^{1}\E_{\mu}[\la\mD f(X+\lambda u \delta\Delta^{-1}\circ \Phi(X)), \Phi(X)\ra]\md \lambda.
    \end{align*}
    Since \(f\) satisfies Assumption \ref{asmp-loss}, the dominated convergence theorem gives
    \begin{align*}
        \lim_{\delta\to 0} \int_{0}^{1}\E_{\mu}[\la\mD f(X+\lambda u \delta\Delta^{-1}\circ \Phi(X)), \Phi(X)\ra]\md \lambda & = \E_{\mu}[\la \mD f (X), \Phi(X)\ra]                                           \\
                                                                                                                             & = \E_{\mu}[\la \op\mD f -h^{*}, \Phi\ra]=\|\op \mD f -h^{*}\|^{q}_{L^{q}(\mu)},
    \end{align*}
    were the second equality is a consequence of \eqref{eqn-EL}.
    This implies that
    \begin{equation}
        \label{eqn-lb-mart}
        \E_{\pi_{\delta}}[f(Y)-f(X)] =  \|\op \mD f -h^{*}\|_{L^{q}(\mu)} \lb(u \|\op\mD f-h^{*}\|_{L^{q}(\mu)}^{q/p}\rb)\delta +o(\delta).
    \end{equation}
    The proof would be complete if we were able to show that the constructed \(\pi_{\delta}\in \cP\).
    By \eqref{eqn-EL}, indeed the second marginal of \(\pi_{\delta}\) is a martingale measure.
    But, in general \(\pi_{\delta}\) may not be a bicausal coupling.
    Instead, by Proposition \ref{prop-dense}, we can approximate \(\pi_{\delta}\) by a bicausal coupling \(\widehat{\pi}_{\delta}\in \cP\). Assumption \ref{asmp-loss} ensures that taking \(\varepsilon\) small enough, e.g., \(\varepsilon=\delta^2\), the estimate \eqref{eqn-lb-mart} still holds if we replace \(\pi_{\delta}\) with \(\widehat{\pi}_{\delta}\).
    This concludes the proof.

\end{proof}
\section{Proofs of continuous-time results}\label{sec-continuous}
\subsection{Hyperbolic scaling}
\label{sec-hyperbolic}

\begin{proof}[Proof of Theorem \ref{thm-cont}]
    Recall that \(\cX=C_{0}([0,T];\bbR^{d})\).
    By Lemma \ref{lem-pen}, it suffices to verify \eqref{eqn-lem-growth}, \eqref{eqn-lem-ub} and \eqref{eqn-lem-lb} with \(\cP= \Pi_{\c}(\mu,*)\) and
    \begin{equation*}
        r=\lb(\int_{0}^{T}\E_{\mu}\lb[|\op\mD_{t}f(X)|_{*}^{q}\rb]\md t\rb)^{1/q}= \|\op \mD f\|_{L^{q}(\mu)}.
    \end{equation*}
    For \eqref{eqn-lem-growth}, we notice
    \begin{equation*}
        |f(y)|\leq C(1+\|y\|_{\infty}^{p})\leq C(1+\|x\|^{p}_{W_{0}^{1,p}}+\|x-y\|^{p}_{W_{0}^{1,p}}).
    \end{equation*}
    Let \(P_{N}=\{0=t_{0}<t_{1}<\cdots<t_{N}=T\}\) be the uniform partition of \([0,T]\).
    We denote the piecewise constant interpolation of a path \(\omega\) by
    \begin{equation*}
        \omega^{N}=\omega_{T}\1_{\{t=T\}}+ \sum_{k=0}^{N-1}\omega_{t_{k+1}}\1_{[t_{k},t_{k+1})}.
    \end{equation*}
    Notice that for any continuous path \(\lim_{n\to\infty} \|\omega-\omega^{N}\|_{\infty}=0\).
    We derive
    \begin{align*}
        \label{eqn-dyadic-ub}
        \E_{\pi}[f(Y)-f(X)] & = \lim_{N\to\infty}\E_{\pi}\lb[f(X+(Y-X)^{N})-f(X)\rb]                                                                                                                                                                 \\
                            & = \lim_{N\to\infty} \E_{\pi}\lb[\int_{0}^{1}\sum_{k=0}^{N-1} \la \mD_{t_{k}}f(X+\lambda(Y-X)^{N}),  \Delta_{t_{k}} (X-Y)\ra\md \lambda \rb]                                                                            \\
                            & = \lim_{N\to\infty} \int_{0}^{1} \E_{\pi}\lb[\int_{0}^{1}\sum_{k=0}^{N-1} \lb\la \E_{\pi}\lb[\mD_{t_{k}}f(X+\lambda(Y-X)^{N})\Big|\cF_{t_{k+1}}\otimes \cG_{t_{k+1}}\rb],  \Delta_{t_{k}} (X-Y)\rb\ra\rb] \md \lambda,
    \end{align*}
    where we used Assumption \ref{asmp-cont}, dominated convergence theorem for the first equality and Proposition \ref{prop-derivative} for the second one.
    Applying H\"older inequality to the right-hand side, and by the reverse Fatou lemma, we obtain
    \begin{align*}
        \E_{\pi}[f(Y)-f(X)] & \leq \E_{\pi}\lb[\limsup_{N\to\infty}\frac{N^{p-1}}{T^{p-1}}\sum_{k=0}^{N-1}|\Delta_{t_{k}} X -\Delta_{t_{k}} Y|^{p}\rb]^{1/p}                                                                           \\
                            & \quad  \times \limsup_{ N\to\infty}\int_{0}^{1}\E_{\pi}\lb[\frac{T}{N}\sum_{k=0}^{N-1}\lb|\E_{\pi}[\mD_{t_{k}}f(X+\lambda(Y-X)^{N})|\cF_{t_{k+1}}\otimes \cG_{t_{k+1}}]\rb|_{*}^{q}\rb]^{1/q}\md \lambda \\
                            & = \E_{\pi}[c(X,Y)]^{1/p} \limsup_{N\to\infty}\int_{0}^{1}\lb(\int_{0}^{T}\varphi^{N}(t,\lambda)\md t\rb)^{1/q}\md \lambda,
    \end{align*}
    where we write
    \begin{equation*}
        \varphi^{N}(t,\lambda):=\sum_{k=0}^{N-1}\E_{\pi}\lb[\lb|\E_{\pi}[\mD_{t_{k}}f(X+\lambda(Y-X)^{N})|\cF_{t_{k+1}}\otimes \cG_{t_{k+1}}]\rb|_{*}^{q}\rb]\1_{[t_{k},t_{k+1})}(t).
    \end{equation*}
    Since \(\mD_{t} f\) has a left limit, we have for any \(\omega,\omega'\in\Omega\) and almost every \(t\)
    \begin{equation*}
        \lim_{N\to\infty}\mD_{\lfloor t\frac{N}{T}\rfloor \frac{T}{N}}f(\omega+\lambda(\omega'-\omega)^{N})=\mD_{t}f(\omega+\lambda(\omega'-\omega)).
    \end{equation*}
    As \(|\mD_{t}f(\omega)|_{*}\leq C (1 +\|\omega\|_{\infty}^{p-1})\), by the dominated convergence theorem, we have the \(L^{q}\)-convergence
    \begin{equation*}
        \lim_{N\to\infty}\E_{\pi}\lb[\lb|\mD_{\lfloor t\frac{N}{T}\rfloor \frac{T}{N}}f(X+\lambda(Y-X)^{N})-\mD_{t}f(X+\lambda(Y-X))\rb|_{*}^{q}\rb]=0.
    \end{equation*}
    Combining the above results with Jensen inequality and the right continuity of the filtration, we have
    \begin{equation}
        \label{eqn-phi}
        \lim_{N\to\infty}\varphi^{N}(t,\lambda)=\E_{\pi}\lb[\lb|\E_{\pi}[\mD_{t}f(X+\lambda(Y-X))|\cF_{t}\otimes \cG_{t}\rb]\rb|_{*}^{q}] \quad \md t\otimes \md \lambda\text{--a.e.}
    \end{equation}
    Therefore, again by the dominated convergence theorem we deduce
    \begin{align*}
        \limsup_{N\to\infty} & \int_{0}^{1}\lb(\int_{0}^{T}\varphi^{N}(t,\lambda)\md t\rb)^{1/q}\md \lambda                                                        \\
        \leq                 & \int_{0}^{1}\lb(\int_{0}^{T}\E_{\pi}[|\E_{\pi}[\mD_{t}f(X+\lambda(Y-X))|\cF_{t}\otimes \cG_{t}]|_{*}^{q}]\md t\rb)^{1/q}\md \lambda \\
                             & = \int_{0}^{1}\| \op\mD f(X+\lambda (Y-X))\|_{L^{q}(\pi)}\md \lambda.
    \end{align*}
    In order to verify \eqref{eqn-lem-ub}, it suffices to show that for any \(\pi_{n}\in\cP\) with \(\lim_{n\to\infty}\E_{\pi_{n}}[c(X,Y)]=0\) it holds
    \begin{equation}
        \label{eqn-ub-cont}
        \limsup_{n\to\infty}\int_{0}^{1}\| \op\mD f(X+\lambda (Y-X))\|_{L^{q}(\pi_{n})}\md \lambda\leq \|\op\mD f\|_{L^{q}(\mu)}.
    \end{equation}
    Since \(\|\cdot\|_{\infty}\) is dominated by \(\|\cdot\|_{W^{1,p}_{0}}\) and \(\mD f\) is continuous with respect to the uniform topology,
    following the same arguments as for \eqref{eq:Proof43_secondcond} in the proof of Theorem \ref{thm-sens} we deduce \eqref{eqn-ub-cont}. It remains to establish \eqref{eqn-lem-lb}.
    We define \(\Phi: \cX\to AC_{0}([0,T];\bbR^{d})\) as
    \begin{equation*}
        \Phi_{t}(X)= \int_{0}^{t}v(\op\mD_{s}f(X))\md s,
    \end{equation*}
    where \(v:\bbR^{d}\to\bbR^{d}\) is, as previously, given by \eqref{eq:proofdiscrete_funct_v}.
    We fix \(u>0\) such that \(L^*(r)=ur-L(u)\).
    For any \(\delta>0\), we construct  \(\pi_{\delta}=(\Id,\Id+ u\delta \Phi)_{\#} \mu\).
    Since \(\Phi\) is adapted, we have \(\pi_{\delta}\in \Pi_{\c}(\mu,*)\).
    A direct computation gives
    \begin{equation*}
        \E_{\pi_{\delta}}[c(X,Y)]= u^{p}\delta^{p}\E_{\mu}\lb[\int_{0}^{T}|v(\op\mD_{t} f(X))|^{p}\md t\rb]= \lb(u \|\op\mD f\|_{L^{q}(\mu)}^{q/p}\rb)^{p}\delta^{p}.
    \end{equation*}
    By Proposition \ref{prop-derivative}, with \(\eta=\Phi(X)\) and noting \(\dot{\eta}_t=v(\op\mD_{t}f(X))\), we derive
    \begin{align*}
        \E_{\pi_{\delta}}[f(Y)-f(X)] & =\E_{\mu}[f(X+u\delta \Phi(X))-f(X)]                                                                                        \\
                                     & = u\delta\int_{0}^{1}\int_{0}^{T}\E_{\mu}[\la \mD_{t}f(X+\lambda u\delta \Phi(X)),v(\op\mD_{t}f(X))\ra ] \md t \md \lambda.
    \end{align*}
    By Assumption \ref{asmp-cont} and dominated convergence theorem, as \(\delta\to 0\), we have
    \begin{align*}
        \lim_{\delta\to 0} \int_{0}^{1}\int_{0}^{T}\E_{\mu}[\la \mD_{t}f(X+\lambda u\delta \Phi(X)),v(\op\mD_{t}f(X))\ra ] \md t\md \lambda =  \int_{0}^{T}\E_{\mu}[|\op\mD_{t}f(X)|^{q}] \md t.
    \end{align*}
    This implies that
    \begin{equation*}
        \E_{\pi_{\delta}}[f(Y)-f(X)]= \|\op \mD f \|_{L^{q}(\mu)} \lb(u \|\op\mD f\|_{L^{q}(\mu)}^{q/p}\rb)\delta +o(\delta),
    \end{equation*}
    and hence \eqref{eqn-lem-lb} holds.
\end{proof}
\begin{rmk}
    It is possible to derive the above result directly from
    \begin{equation*}
        \E_{\pi}[f(Y)-f(X )] = \E_{\pi}\lb[\int_{0}^{1}\int_{0}^{T}\la\mD_{t}f(X+\lambda (Y-X)), (X-Y)'(t)\ra\md t\md \lambda\rb].
    \end{equation*}
    However, we point out that the weak derivative \((X-Y)'\) is only defined up to a Lebesgue null set.
    Taking the above discrete-time approximation approach conveniently allowed us to circumvent the intricate measurability discussion on the optional projection in continuous-time. In addition, our approach has its  independent merits as it builds a clear connection between discrete and continuous time problems.
\end{rmk}

\subsection{Parabolic scaling}
\label{sec-parabolic}

In the proofs that follow, in a chain of inequalities, $C$ may denote a different constant from one line to another. Recall  that
\begin{equation*}
    V_{\M}(\delta)=\sup_{\nu\in \scrM(\cX)}\lb\{\E_{\nu}\lb[U\lb(H\rb)\rb]-L_{\delta}(d_{\bc}(\mu,\nu)^{1/2})\rb\},
\end{equation*}
where \(H=\int_{0}^{T} \la h(t,X_{\cdot\wedge t}),\md X_{t}\ra\).
\begin{lem}
    \label{lem-wiener}
    Let  Assumption \ref{asmp-wiener} hold.
    Then, for any \(\pi\in \Pi_{\bc}(\mu,*)\) with \(\pi(\cX\times \cdot)\in \scrM(\cX)\) we have
    \begin{align*}
         & \E_{\pi}\lb[U\lb(\int_{0}^{T} \la h(t,Y_{\cdot\wedge t}), \md Y_{t}\ra\rb)-U\lb(\int_{0}^{T} \la h(t,X_{\cdot\wedge t}), \md X_{t}\ra \rb)\rb]                                                                 \\
         & =\E_{\pi}\lb[U' \lb(H \rb) \lb(\int_{0}^{T}\la h(t,X_{\cdot\wedge t}), \md (Y-X)_{t}\ra+ \int_{0}^{T}\lb\la \int_{0}^{t}\MD_{s}^{\intercal} h(t,X_{\cdot\wedge t})\md^{-} (Y-X)_{s},\md X_{t}   \rb\ra\rb)\rb] \\
         & \qquad +o\lb(\E_{\pi}\lb[\lb[X-Y\rb]_{T}\rb]^{1/2}\rb).
    \end{align*}
\end{lem}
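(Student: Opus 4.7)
The strategy is a two-stage first-order expansion---first in \(U\), then in \(h\)---followed by a change of integrator from \(Y\) to \(X\). Set \(\Delta H := H(Y) - H(X)\); by bilinearity of the stochastic integral,
\[\Delta H = \int_0^T \la h(t,X_{\cdot\wedge t}),\md(Y-X)_t\ra + \int_0^T\la h(t,Y_{\cdot\wedge t}) - h(t,X_{\cdot\wedge t}),\md Y_t\ra,\]
both summands being It\^o integrals against martingales under \(\pi\) (Proposition \ref{prop-mart}). Since \(\|U''\|_\infty<\infty\), Taylor's theorem gives \(U(H(Y)) - U(H(X)) = U'(H)\Delta H + R_0\) with \(|R_0| \le \tfrac12\|U''\|_\infty(\Delta H)^2\). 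Combining It\^o's isometry, boundedness of \(h\), Assumption \ref{asmp-wiener}(1), and the a priori estimate \([Y]_T \le 2T + 2[Y-X]_T\), one controls \(\E_\pi[(\Delta H)^2]\) at the order \(o(\E_\pi[[X-Y]_T]^{1/2})\), so the Taylor remainder is negligible on the relevant scale.

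The second stage invokes the pathwise Malliavin expansion from Assumption \ref{asmp-wiener}(2):
\[h(t,Y_{\cdot\wedge t}) - h(t,X_{\cdot\wedge t}) = \beta_t + R_1(t), \qquad \beta_t := \int_0^t \MD_s^\intercal h(t,X_{\cdot\wedge t})\,\md^-(Y-X)_s,\]
with \(\E_\pi[\sup_t|R_1(t)|^\gamma] = o(\E_\pi[[X-Y]_T]^{\gamma/2})\). Substituting into \(\Delta H\) and splitting \(\md Y = \md X + \md(Y-X)\) produces
\[\Delta H = \int_0^T\la h(t,X_{\cdot\wedge t}),\md(Y-X)_t\ra + \int_0^T\la\beta_t,\md X_t\ra + \int_0^T\la\beta_t,\md(Y-X)_t\ra + \int_0^T\la R_1(t),\md Y_t\ra.\]
The first two integrals are exactly the main term claimed by the lemma, so the proof reduces to showing that \(U'(H)\) times each of the latter two integrals contributes at most \(o(\E_\pi[[X-Y]_T]^{1/2})\) in expectation.

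For these remainder bounds I note that \(U'(H) \in L^p(\pi)\) for every \(p<\infty\), since the first marginal of \(\pi\) is Wiener, \(H\) is an It\^o integral of a bounded integrand against \(X\), and \(U'\) grows at most linearly. For the \(R_1\) term, H\"older's inequality with conjugate exponents \((\gamma/(\gamma-1),\gamma)\) followed by Burkholder--Davis--Gundy gives
\[\bigl|\E_\pi[U'(H)\,\textstyle\int_0^T\la R_1(t),\md Y_t\ra]\bigr| \le C\|U'(H)\|_{L^{\gamma/(\gamma-1)}}\,\|\sup_t|R_1(t)|\cdot [Y]_T^{1/2}\|_{L^\gamma},\]
which is \(o(\E_\pi[[X-Y]_T]^{1/2})\) by Assumption \ref{asmp-wiener}(2) and the integrability of \([Y]_T\). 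For the \(\beta\) term, Clark--Ocone gives \(U'(H) = \E_\pi[U'(H)] + \int_0^T\pp\MD_s U'(H)\,\md X_s\); martingale integration by parts converts \(\E_\pi[U'(H)\int_0^T\la\beta_t,\md(Y-X)_t\ra]\) into \(\E_\pi[\int_0^T\la\pp\MD_s U'(H),\beta_s\ra\,\md\la X,Y-X\ra_s]\), which Kunita--Watanabe together with \(|\la X,Y-X\ra_T| \le T^{1/2}[Y-X]_T^{1/2}\) and the \(L^\gamma\) integrability of \(\sup_t\|\beta_t\|\) furnished by Assumption \ref{asmp-wiener}(3)--(5) bound by \(o(\E_\pi[[X-Y]_T]^{1/2})\).

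The principal obstacle is the \(\beta\) term. Unlike a classical It\^o integral, the forward integral \(\beta_t\) has nonzero conditional mean and is not obviously small in any \(L^p\) sense; its decay must be extracted from the smallness of \([Y-X]_T\) against which it is taken, by combining the regularity of \(\MD h\) in Assumption \ref{asmp-wiener}(3)--(5), Proposition \ref{prop-fw} for the expectation of forward integrals, and a Kunita--Watanabe estimate for the cross-variation \(\la X,Y-X\ra\). Once these cross-dependence controls are in place, the substitution and rearrangement in the second stage are essentially algebraic, and the lemma follows.
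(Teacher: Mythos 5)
Your high-level scaffolding is right and matches the paper: Taylor-expand \(U\) with a quadratic remainder controlled by \(\E_\pi[(\Delta H)^2]\), invoke Assumption \ref{asmp-wiener}(2) to linearize the change \(h(t,Y_{\cdot\wedge t})-h(t,X_{\cdot\wedge t})\), and read off the main term. But the way you split the remaining error leads to estimates your cited assumptions do not support.

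The paper's decomposition of \(\Delta H - I_1 - I_2\) is
\[
\int_0^T\bigl\la h(t,Y_{\cdot\wedge t})-h(t,X_{\cdot\wedge t}),\md(Y-X)_t\bigr\ra
\ +\
\int_0^T\Bigl\la h(t,Y_{\cdot\wedge t})-h(t,X_{\cdot\wedge t})-\beta_t,\md X_t\Bigr\ra,
\]
obtained by splitting \(\md Y = \md X + \md(Y-X)\) for \emph{both} \(\beta\) and \(R_1\). This is decisive: the \(\md X\) term carries \([X]_T=T\) deterministically, so BDG reduces it to \(T^{\gamma/2}\E_\pi[\sup_t|R_1(t)|^\gamma]\), which Assumption (2) controls at exactly the right rate; and the \(\md(Y-X)\) term carries the \emph{full} increment \(h(Y)-h(X)\), which is bounded by \(2\|h\|_\infty\), so after BDG and H\"older with exponents \((2/\gamma,\, 2/(2-\gamma))\) one lands on Assumption (1). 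You instead keep \(\int R_1\,\md Y\) and \(\int\beta\,\md(Y-X)\). For the former, BDG produces \(\E_\pi[\sup_t|R_1(t)|^\gamma[Y]_T^{\gamma/2}]\); since \([Y]_T\approx 2T+2[Y-X]_T\) is random and correlated with \(R_1\), you need an extra H\"older that demands \(\sup_t|R_1(t)|\in L^{2\gamma/(2-\gamma)}\), whereas Assumption (2) only gives \(L^\gamma\) — and \(R_1\) contains the forward integral \(\beta\), which is not pointwise bounded, so boundedness of \(h\) does not rescue you. For the latter, your Clark--Ocone/integration-by-parts/Kunita--Watanabe route needs \(L^2\) control of \(\beta\) (an It\^o-isometry bound for a \emph{forward} integral with anticipative integrand) and adaptedness of \(t\mapsto\beta_t\); neither is furnished by Assumptions (3)--(5), and the paper's proof of this lemma deliberately avoids this machinery altogether (Clark--Ocone appears only later in the proof of Theorem \ref{thm-cont-mart}, via Proposition \ref{prop-fw}).

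The repair is small: split \(\md Y = \md X + \md(Y-X)\) in the \(R_1\) term as well; then \(\int\beta\,\md(Y-X)+\int R_1\,\md(Y-X)=\int(h(Y)-h(X))\,\md(Y-X)\), and you recover the paper's two-term decomposition with the straightforward BDG/H\"older bounds described above, and no Clark--Ocone is needed.
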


\begin{proof}
    Since \(\pi\) is bicausal coupling between martingale measures, by Proposition \ref{prop-mart} we have  \((X,Y)\) is a joint martingale under \(\pi\).
    By Assumption \ref{asmp-wiener} and It\^o isometry, we notice
    \begin{align*}
         & \E_{\pi}\lb[\lb| \int_{0}^{T} \la h(t,Y_{\cdot\wedge t}), \md Y_{t}\ra-\int_{0}^{T} \la h(t,X_{\cdot\wedge t}), \md X_{t}\ra \rb|^{2}\rb]                                                                                         \\
         & \quad \leq2\E_{\pi}\Biggl[\biggl|\int_{0}^{T} \la h(t,Y_{\cdot\wedge t}), \md (X-Y)_{t}\ra\biggr|^{2}\Biggr]+ 2\E_{\pi}\Biggl[\biggl|\int_{0}^{T}\la h(t,X_{\cdot\wedge t})-h(t,Y_{\cdot\wedge t}),\md X_{t}\ra\biggr|^{2}\Biggr] \\
         & \qquad \leq C \E_{\pi}\lb[ [X-Y]_{T} \sup_{t\in[0,T]}|h(t,Y_{\cdot\wedge t})|^{2}\rb] + C\E_{\pi}\lb[[X]_{T} \sup_{t\in[0,T]}|h(t,X_{\cdot \wedge t})-h(t,Y_{\cdot\wedge t})|^{2}\rb]                                             \\
         & \qquad = o(\E_{\pi}[[X-Y]_{T}]^{1/2}),
    \end{align*}
    where in the last step, we use the fact that \(h\) is bounded and \([X]_{T}=T\).
    Since \(U\) has a bounded second derivative, we have
    \begin{equation*}
        |U(y)-U(x)- U'  (x)(y-x)|\leq C |y-x|^{2}.
    \end{equation*}
    Together with the previous estimate, this implies that
    \begin{align*}
         & \E_{\pi}\lb[U\lb(\int_{0}^{T} \la h(t,Y_{\cdot\wedge t}), \md Y_{t}\ra\rb)-U\lb(\int_{0}^{T} \la h(t,X_{\cdot\wedge t}), \md X_{t}\ra \rb)\rb]                                       \\
         & =\E_{\pi}\lb[ U' \lb(H \rb)\lb( \int_{0}^{T} \la h(t,Y_{\cdot\wedge t}), \md Y_{t}\ra-\int_{0}^{T} \la h(t,X_{\cdot\wedge t}), \md X_{t}\ra \rb)\rb]  +o(\E_{\pi}[[X-Y]_{T}]^{1/2}).
    \end{align*}
    For simplicity, we write
    \begin{equation*}
        I_{1}=\int_{0}^{T}\la h(t,X_{\cdot\wedge t}),\md (Y-X)_{t}\ra  \text{ and } I_{2}=\int_{0}^{T}\lb\la \int_{0}^{t}\MD_{s}^{\intercal} h(t,X_{\cdot\wedge t})\md^{-} (Y-X)_{s},\md X_{t}\rb\ra.
    \end{equation*}
    Notice that \( U'(H )\) has a finite moment of any order.
    To conclude the proof, it then suffices to show that for \(\gamma\) in Assumption \ref{asmp-wiener}
    \begin{align}
        \label{eqn-fo-ito}
        \E_{\pi}\lb[\lb|  \int_{0}^{T} \la h(t,Y_{\cdot\wedge t}), \md Y_{t}\ra-\int_{0}^{T} \la h(t,X_{\cdot\wedge t}), \md X_{t}\ra - I_{1}-I_{2}\rb|^{\gamma}\rb]  =o (\E_{\pi}[[X-Y]_{T}]^{\gamma/2}).
    \end{align}
    Plugging  \(I_{1}\) and \(I_{2}\) into \eqref{eqn-fo-ito}, we obtain
    \begin{align*}
         & \E_{\pi}\lb[\lb|  \int_{0}^{T} \la h(t,Y_{\cdot\wedge t}), \md Y_{t}\ra-\int_{0}^{T} \la h(t,X_{\cdot\wedge t}), \md X_{t}\ra - I_{1} -I_{2}\rb|^{\gamma}\rb]                                        \\
         & \leq C\E_{\pi}\lb[\lb|\int_{0}^{T}\la h(t,Y_{\cdot\wedge t})-h(t,X_{\cdot\wedge t}), \md (Y-X)_{t}\ra \rb|^{\gamma}\rb]
        \\
         & \qquad + C\E_{\pi} \lb[\lb|\int_{0}^{T}\lb\la h(t,Y_{\cdot\wedge t})-h(t,X_{\cdot\wedge t})-\int_{0}^{t} \MD_{s}^{\intercal}h(t,X_{\cdot\wedge t})\md^{-}(Y-X)_{s},\md X_{t}\rb\ra \rb|^{\gamma}\rb] \\
         & :=J_{1}+J_{2}.
    \end{align*}
    It follows from BDG inequality and H\"older inequality that
    \begin{align*}
        J_{1} & \leq C\E_{\pi}\lb[[X-Y]_{T}^{\gamma/2}\sup_{t\in[0,T]}|h(t,Y_{\cdot\wedge t})-h(t,X_{\cdot\wedge t})|^{\gamma}\rb]                                     \\
              & \leq C \E_{\pi}[[X-Y]_{T}]^{\gamma/2}\E_{\pi}\lb[\sup_{t\in[0,T]}|h(t,Y_{\cdot\wedge t})-h(t,X_{\cdot\wedge t})|^{2\gamma/(2-\gamma)}\rb]^{1-\gamma/2} \\
              & \leq C \E_{\pi}[[X-Y]_{T}]^{\gamma/2}\E_{\pi}\lb[\sup_{t\in[0,T]}|h(t,Y_{\cdot\wedge t})-h(t,X_{\cdot\wedge t})|^{2}\rb]^{1-\gamma/2}                  \\
              & = o (\E_{\pi}[[X-Y]_{T}]^{\gamma/2}).
    \end{align*}
    Here, the third line follows from the boundedness of \(h\) and \(\gamma\in(1,2)\).
    For \(J_{2}\), similarly by BDG inequality  and Assumption \ref{asmp-wiener} we deduce
    \begin{align*}
        J_{2} & \leq C \E_{\pi}\lb[\sup_{t\in[0,T]}\lb|h(t,Y_{\cdot\wedge t})-h(t,X_{\cdot\wedge t})-\int_{0}^{t} \MD_{s}^{\intercal}h(t,X_{\cdot\wedge t})\md^{-}(Y-X)_{s}\rb|^{\gamma} [X]_{T}^{\gamma/2}\rb] \\
              & \leq  o( \E_{\pi}[[X-Y]_{T}]^{\gamma/2}).
    \end{align*}

\end{proof}

\begin{proof}[Proof of Theorem \ref{thm-cont-mart}]
    Recall that \(H=\int_{0}^{T}\la h(t,X_{\cdot\wedge t}), \md X_{t}\ra \)  and \[\varphi_{t}= \prescript{\mathrm{p}}{}{\lb\{ \MD_{t+}\MD_{t}U(H)-  U'(H) \MD_{t}^{\intercal}h(t,X_{\cdot\wedge t})\rb\}}.\]
    We remark \(\MD_{+}\MD U(H)\) is well-defined from the regularity condition of \(h\).
    By Lemma \ref{lem-pen}, it suffices to verify \eqref{eqn-lem-growth}, \eqref{eqn-lem-ub}, and \eqref{eqn-lem-lb} with \(\cP=\{\pi\in\Pi_{\bc}(\mu,*):\Pi(\cX\times\cdot)\in~\scrM(\cX)\}\) and
    \begin{equation*}
        r=\E_{\mu}\lb[\int_{0}^{T} \| \varphi_{t}\|^{2}_{\bF}\md t\rb]^{1/2}.
    \end{equation*}
    To show \eqref{eqn-lem-growth}, we notice that \(h\) is bounded and \(U\) has a bounded second derivative, and derive
    \begin{equation*}
        \E_{\pi}\lb[U\lb(\int_{0}^{T}\la h(t,Y_{\cdot\wedge t}), \md Y_{t}\ra\rb)\rb]\leq C \lb(1+ \E_{\pi}\lb[\lb(\int_{0}^{T}\la h(t,Y_{\cdot\wedge t}), \md Y_{t}\ra\rb)^{2}\rb]\rb)\leq C(1+\E_{\pi}[[Y-X]_{T}]).
    \end{equation*}
    Now, we verify \eqref{eqn-lem-ub}.
    Since \(\MD_{s}h(t,X_{\cdot\wedge t})\) satisfies conditions in Theorem \ref{thm-fubini}, by Theorem~\ref{thm-fubini} we calculate
    \begin{align*}
         & \int_{0}^{T} \la h(s,X_{\cdot\wedge s}), \md^{-}(Y-X)_{s} \ra + \int_{0}^{T}\lb\la \int_{0}^{t}\MD^{\intercal}_{s}h(t,X_{\cdot\wedge t})\md^{-}(Y-X)_{s},\md X_{t}\rb\ra \\
         & = \int_{0}^{T} \la h(s,X_{\cdot\wedge s}), \md^{-}(Y-X)_{s} \ra + \int_{0}^{T} \lb\la \int_{s}^{T}\MD_{s} h(t,X_{\cdot\wedge t}) \md X_{t},\md^{-}(Y-X)_{s}   \rb\ra     \\
         & \qquad -\int_{0}^{T}\lb\la \MD_{s}^{\intercal}h(s,X_{\cdot\wedge s}),\md \lt X,Y-X\rt_{s}\rb\ra_{\bF}                                                                    \\
         & = \int_{0}^{T} \la \MD_{s}H, \md^{-}(Y-X)_{s}\ra-\int_{0}^{T}\lb\la \MD_{s}^{\intercal}h(s,X_{\cdot\wedge s}),\md \lt X,Y-X\rt_{s}\rb\ra_{\bF}.
    \end{align*}
    Therefore, Lemma \ref{lem-wiener} yields
    \begin{align}
        \label{eqn-wiener}
         & \E_{\pi}\lb[U\lb(\int_{0}^{T} \la h(t,Y_{\cdot\wedge t}), \md Y_{t}\ra\rb)-U\lb(\int_{0}^{T} \la h(t,X_{\cdot\wedge t}), \md X_{t}\ra \rb)\rb]                                                               \nonumber \\
         & = \E_{\pi}\lb[ U' (H)\int_{0}^{T} \la \MD_{s} H ,\md^{-}(Y-X)_{s}\ra - U'(H)\int_{0}^{T}\lb\la \MD_{s}^{\intercal}h(s,X_{\cdot\wedge s}),\md \lt X,Y-X\rt_{s}\rb\ra_{\bF}\rb]                                          \\
         & \qquad+ o\lb(\E_{\pi}[c(X,Y)]^{1/2}\rb) \nonumber.
    \end{align}
    Since \(U'(H)\) has a finite moment of any order, a simple application of H\"older inequality gives \( \MD U(H) = U' (H)\MD H  \)  is \((Y-X)\)  forward--\(\gamma'\) integrable for any \(\gamma'\in[1,\gamma)\).
    Furthermore, we notice that \( \MD U(H)\) satisfies the assumption in Proposition \ref{prop-fw}, and hence we deduce
    \begin{equation*}
        \E_{\pi}\lb[ U' (H)\int_{0}^{T}\la \MD_{s}H ,\md^{-}(Y-X)_{s}\ra\rb]=\E_{\pi}\lb[\int_{0}^{T}\la \MD_{s+}\MD_{s}U(H), \md \lt X,Y-X\rt_{s}\ra_{\bF}\rb].
    \end{equation*}
    Plugging the above equality into estimate \eqref{eqn-wiener} yields
    \begin{align}
        \label{eqn-wiener-ub}
         & \E_{\pi}\lb[U\lb(\int_{0}^{T} \la h(t,Y_{\cdot\wedge t}), \md Y_{t}\ra\rb)-U\lb(\int_{0}^{T} \la h(t,X_{\cdot\wedge t}), \md X_{t}\ra \rb)\rb]  \nonumber                                                             \nonumber \\
         & = \E_{\pi}\lb[ \int_{0}^{T}  \la \MD_{s+}\MD_{s}U(H)-  U'(H) \MD_{s}^{\intercal}h(s,X_{\cdot\wedge s}),\md \lt X,Y-X\rt_{s}\ra_{\bF}\rb]              + o\lb(\E_{\pi}[c(X,Y)]^{1/2}\rb) \nonumber                               \\
         & =\E_{\pi}\lb[\int_{0}^{T}\la \varphi_{s},\md \lt X,Y-X\rt_{s}\ra_{\bF}\rb]+ o\lb(\E_{\pi}[c(X,Y)]^{1/2}\rb).
    \end{align}
    Hence, by Kunita--Watanabe inequality, we establish \eqref{eqn-lem-ub} holds as follows:
    \begin{align*}
         & \E_{\pi}\lb[U\lb(\int_{0}^{T} \la h(t,Y_{\cdot\wedge t}), \md Y_{t}\ra\rb)-U\lb(\int_{0}^{T} \la h(t,X_{\cdot\wedge t}), \md X_{t}\ra \rb)\rb]                                                                 \\
         & \leq \E_{\pi}\lb[\int_{0}^{T}\la \varphi_{s} \varphi_{s}^{\intercal},\md \lt X\rt_{s}\ra_{\bF}\rb]^{1/2}\E_{\pi}\lb[\int_{0}^{T}\la \Id,\md \lt Y-X\rt_{s}\ra_{\bF}\rb]^{1/2}+ o\lb(\E_{\pi}[c(X,Y)]^{1/2}\rb) \\
         & \leq \E_{\mu}\lb[\int_{0}^{T} \| \varphi_{t}\|^{2}_{\bF}\md t\rb]^{1/2} \E_{\pi}[c(X,Y)]^{1/2} + o\lb(\E_{\pi}[c(X,Y)]^{1/2}\rb).
    \end{align*}

    We turn now to \eqref{eqn-lem-lb} and define
    \(
    \Phi_{\cdot}=\int_{0}^{\cdot}\varphi_{s}\md X_{s}.
    \)
    We fix \(u>0\) such that \(L^*(r)=ur-L(u)\).
    For \(\delta>0\), set  \(\pi_{\delta}= (\Id,\Id+ u\delta \Phi) _{\#}\mu\).
    By direct computation, we have
    \begin{equation*}
        \E_{\pi_{\delta}}[c(X,Y)]=u^{2}\delta^{2}\E_{\mu}[[\Phi]_{T}]= u^{2}\delta^{2} \E_{\mu} \lb[\int_{0}^{T} \|\varphi_{t}\|_{\bF}^{2}\md t \rb].
    \end{equation*}
    On the other hand, we notice
    \begin{align*}
         & \E_{\pi_{\delta}}\lb[  U' (H)\int_{0}^{T} \la \MD_{s} H, \md^{-}(Y-X)_{s}\ra - U'(H)\int_{0}^{T}\lb\la \MD_{s}^{\intercal}h(s,X_{\cdot\wedge s}),\md \lt X,Y-X\rt_{s}\rb\ra_{\bF}\rb] \\
         & =\E_{\pi_{\delta}}\lb[\int_{0}^{T}\lb\la \varphi_{s} ,\md \lt X,Y-X \rt_{s}\rb\ra_{\bF}\rb]                                                                                           \\
         & =u\delta\E_{\mu}\lb[\int_{0}^{T}\la\varphi_{s}, \varphi_{s} \md \lt X\rt_{s} \ra_{\bF}\rb]= u\delta \E_{\mu}\lb[\int_{0}^{T} \|\varphi_{t}\|_{\bF}^{2}\md t\rb].
    \end{align*}
    Hence, it follows from estimate \eqref{eqn-wiener-ub} that
    \begin{equation}
        \label{eqn-cont-mart}
        \E_{\pi_{\delta}}\lb[U\lb(\int_{0}^{T} \la h(t,Y_{\cdot\wedge t}), \md Y_{t}\ra\rb)-U\lb(\int_{0}^{T} \la h(t,X_{\cdot\wedge t}), \md X_{t}\ra \rb)\rb]= u\delta\E_{\mu}\lb[\int_{0}^{T} \|\varphi_{t}\|_{\bF}^{2}\md t\rb] + o(\delta).
    \end{equation}
    The proof would be complete if it was the case that \(\pi_{\delta}\in\cP\).
    However, in general \(\pi_{\delta}\) is not a bicausal coupling.
    To remedy this, we consider the following approximation to \(\pi_{\delta}\).
    Let \(\varphi^{n}\) be a sequence of bounded predictable processes such that
    \begin{equation*}
        \lim_{n\to\infty}\E_{\mu}\lb[ \int_{0}^{T} \|\varphi_{t}-\varphi^{n}_{t}\|_{\bF}^{2}\md t\rb]=0.
    \end{equation*}
    We construct \(\pi_{\delta}^{n}=(\Id,\Id+u\delta\Phi^{n})_{\#}\mu\), where
    \(
    \Phi^{n}_{\cdot}=\int_{0}^{\cdot}\varphi^{n}_{s}\md X_{s}.
    \)
    Following the same argument as above, we have
    \begin{equation*}
        \E_{\pi^{n}_{\delta}}\lb[U\lb(\int_{0}^{T} \la h(t,Y_{\cdot\wedge t}), \md Y_{t}\ra\rb)-U\lb(\int_{0}^{T} \la h(t,X_{\cdot\wedge t}), \md X_{t}\ra \rb)\rb]= u\delta\E_{\mu}\lb[\int_{0}^{T} \la \varphi,\varphi^{n}\ra_{\bF}\md t\rb] + o(\delta).
    \end{equation*}
    We notice that indeed \(X+ u\delta\Phi^{n}\) is a regular martingale.
    Moreover, for \(\delta\) sufficiently small, \(X+u \delta \Phi^{n}\) is a non-degenerate martingale, and hence \(X+u\delta \Phi^{n}\) has the martingale representation property.
    This implies for sufficiently small \(\delta\), \(\pi^{n}_{\delta}\) is a bicausal coupling by Proposition~\ref{prop-mart}.
    Therefore, there exists a sequence of bicausal couplings \(\widehat{\pi}_{\delta}\) such that  estimate~\eqref{eqn-cont-mart} still holds if we replace \(\pi_{\delta}\) by \(\widehat{\pi}_{\delta}\).
    This concludes the proof.

\end{proof}

\section{Auxiliary proofs}
\label{sec-aux}
In the section, we present the remaining proofs, in particular for the results in Section \ref{sec-tools}.

\begin{proof}[Proof of Proposition \ref{prop-dense}]
    The first part follows directly from \cite[Lemma 3.1]{bartlSensitivityMultiperiodOptimization2022}. We prove the second part by induction.
    The base case \(N=1\) is trivial, and we assume the statement holds for \(N-1\).
    Then there exists \((0,\Phi^{\varepsilon}_{1},\dots,\Phi^{\varepsilon}_{N-1})\) with \(|\Phi^{\varepsilon}_{n}-\Phi_{n}|<\varepsilon\) for \(1\leq n \leq N-1\) such that the projection of  \(\Phi^{\varepsilon}_{\#}\mu\) on the first \(N-1\) marginals is a martingale.
    We may further assume \((0,\Phi^{\varepsilon}_{1},\dots,\Phi^{\varepsilon}_{n-1})\) is an injection for any \(1\leq n\leq N-1\).

    Now, it suffices to construct  \(\Phi^{\varepsilon}_{N}\) with \(|\Phi^{\varepsilon}_{N}-\Phi_{N}|_{\infty}<\varepsilon\) such that \(\Phi^{\varepsilon}_{\#}\mu\in \scrM(\cX)\) and \(\Phi^{\varepsilon}:\cX\to\cX\) is injective.
    We write \(\proj^{\varepsilon}(x)= \lfloor \frac{4x}{\varepsilon}\rfloor \frac{\varepsilon}{4}\) and construct
    \begin{equation*}
        \Phi^{\varepsilon}_{N}(0,x_{1},\dots, x_{N})= \proj^{\varepsilon}\circ \Phi_{N}(0,x_{1},\dots,x_{N}) +\varphi^{\varepsilon }(x_{N})  - r(0,x_{1},\dots,x_{N-1}),
    \end{equation*}
    where \(\proj^{\varepsilon}\) is the projection to the \(\varepsilon/4\) grid, \(\varphi^{\varepsilon}:\bbR^{d}\to (0,\varepsilon/4)^{d}\) is a  measurable bijection, and \(r\) is the residual given by
    \begin{equation*}
        r(0,x_{1},\dots,x_{N-1})=\E_{\mu_{N-1}}[\proj^{\varepsilon}\circ \Phi_{N}(0,x_{1},\dots,x_{N-1},X_{N}) + \varphi^{\varepsilon} (X_{N})]-\Phi_{N-1}^{\varepsilon}(0,x_{1},\dots,x_{N-1}).
    \end{equation*}
    Here, \(\mu_{N-1}\) is the disintegration kernel of \(\mu\) given by
    \begin{equation*}
        \mu(\md x_{1},\dots,\md x_{N})=\mu(\md x_{1},\dots,\md x_{N-1})\mu_{N-1}(x_{1},\dots,x_{N-1},\md x_{N}).
    \end{equation*}
    It is clear that \(|\Phi^{\varepsilon}_{N}-\Phi_{N}|_{\infty}< \varepsilon\) and \(\Phi^{\varepsilon}_{\#}\mu\in \scrM(\cX)\).
    We now verify that \(\Phi^{\varepsilon}\) is injective.
    Assume \(\Phi^{\varepsilon}(0,x_{1},\dots, x_{N})= \Phi^{\varepsilon}(0,x'_{1},\dots,x'_{N})\).
    By induction assumption, we derive \(x_{n}=x'_{n}\) for \(1\leq n\leq N-1\).
    Therefore, this implies
    \begin{equation*}
        \proj^{\varepsilon}\circ \Phi_{N}(0,x_{1},\dots,x_{N}) +\varphi^{\varepsilon }(x_{N})= \proj^{\varepsilon}\circ \Phi_{N}(0,x'_{1},\dots,x'_{N}) +\varphi^{\varepsilon }(x'_{N})
    \end{equation*}
    which further implies \(x_{N}=x'_{N}\).
    Therefore, \(\Phi^{\varepsilon}\) is injective and \((\Id,\Phi^{\varepsilon})_{\#}\mu\in \Pi_{\bc}(\mu,*)\).
\end{proof}

\begin{proof}[Proof of Propositions \ref{prop-derivative} and \ref{prop:pathMDisclassicalMD}]
    We start by proving Proposition \ref{prop-derivative}.
    If \(\eta\) is a simple step function, then the result is immediate from the continuity of \(\mD_{t} f\).
    This implies
    \begin{equation*}
        f(\omega+\eta)-f(\omega)=\int_{0}^{1}\sum_{k=1}^{n}\la \mD_{t_{k}}f(\omega + \lambda \eta ), e_{k}\ra \md \lambda.
    \end{equation*}
    Now, we assume \(\eta\in AC_{0}([0,T];\bbR^{d})\) and take a  sequence of simple step functions \(\eta^{n}(t)=\sum_{k=1}^{n}(t_{k+1}^{n}-t_{k}^{n})e_{k}^{n}\1_{[t_{k}^{n},T]}(t)\) such that
    \begin{equation*}
        \sup_{1\leq k \leq n} (t_{k+1}^{n}-t_{k}^{n})(|e_{k}^{n}|\lor  1) \to 0 \qquad \text{and} \qquad \sum_{k=1}^{n} e_{k}^{n}\1_{[t_{k}^{n},t_{k+1}^{n})}\to \dot{\eta} \text{ in } L^{1}.
    \end{equation*}
    We can always achieve the above by refining  partitions.
    Therefore, we deduce that \(\eta^{n}\) converges to \(\eta\) in the uniform topology.
    This gives
    \begin{align*}
        f(\omega+\eta)-f(\omega) & =\lim_{n\to\infty}f(\omega+\eta^{n})-f(\omega)                                                                                                                            \\
                                 & =\lim_{n\to\infty}\int_{0}^{1}\sum_{k=1}^{n}(t_{k+1}^{n}-t_{k}^{n})\la \mD_{t_{k}^{n}}f(\omega + \lambda \eta^{n} ), e_{k}^{n}\ra \md \lambda                             \\
                                 & = \lim_{n\to\infty}\int_{0}^{1}\int_{0}^{T}\sum_{k=1}^{n}\la \mD_{t_{k}^{n}}f(\omega + \lambda \eta^{n} ),  e_{k}^{n}\1_{[t_{k}^{n},t_{k+1}^{n})}(t)\ra \md t\md \lambda.
    \end{align*}
    Since \(\mD_{t}f\) has left limits, the integrand converges to
    \(\la \mD_{t-}f(\omega + \lambda \eta ), \dot{\eta}_{t}\ra\), which is equal to \(\la \mD_{t}f(\omega + \lambda \eta ), \dot{\eta}_{t}\ra\) \(\md t \otimes \md \lambda\)--a.e.
    Furthermore, as \(\mD f\) is  boundedness preserving, we derive by the dominated convergence theorem that
    \begin{equation*}
        f(\omega+\eta)-f(\omega)=\int_{0}^{1}\int_{0}^{T}\la \mD_{t}f(\omega + \lambda \eta ), \dot{\eta}_{t}\ra \md t\md \lambda,
    \end{equation*}
    from which \eqref{eq:pathMD_char} follows.

    To conclude that the pathwise and the classical Malliavin derivatives coincide, it suffices to observe that \(W^{1,2}_{0}\subseteq~AC_{0}\) and that, by \eqref{eq:classMD_char} and \eqref{eq:pathMD_char}, we have
    \begin{equation*}
        \int_{0}^{T}\la \mD_{t}f(X),\dot{\eta}_{t}\ra \md t = \int_{0}^{T}\la \MD_{t}f(X),\dot{\eta}_{t}\ra \md t,\quad \forall \eta\in W^{1,2}_{0}.
    \end{equation*}
\end{proof}

Before we prove the stochastic Fubini theorem (Theorem \ref{thm-fubini}), we first show that the forward integral indeed agrees with the It\^o integral if the integrand is predictable.
The below can be viewed as an \(L^{\gamma}\) extension of \citet[Proposition 1.1]{russo1993forward}.
\begin{prop}
    \label{prop-forward}
    Let \(\gamma\in[1,2)\), \(A\) be a predictable process, and \(B\) be a regular martingale with \(\md [B]_{t}=\xi_{t}\md t\).
    We assume either
    \begin{equation}
        \E_{P}\lb[\sup_{t\in [0,T]}|A_{t}|^{2\gamma /(2-\gamma)}\rb]<\infty \quad \text{and} \quad \E_{P}\lb[\int_{0}^{T}|\xi_{t}|\md t\rb]<\infty,
    \end{equation}
    or
    \begin{equation}
        \E_{P}\lb[\int_{0}^{T}|A_{t}|^{2}\md t\rb]<\infty \quad \text{and} \quad \E_{P}\lb[ \sup_{t\in[0,T]}|\xi_{t}|^{\gamma/(2-\gamma)}\rb]<\infty.
    \end{equation}
    Then \(A\) is \(B\) forward--\(\gamma\) integrable, and  the forward integral coincides with the It\^o integral, i.e.,
    \begin{equation*}
        \int_{0}^{T}\la A_{t},\md^{-}B_{t}\ra=\int_{0}^{T}\la A_{t},\md B_{t}\ra.
    \end{equation*}

\end{prop}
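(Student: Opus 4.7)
The plan is to proceed via the classical stochastic Fubini theorem combined with an $L^{\gamma}$ passage to the limit using the Burkholder--Davis--Gundy (BDG) inequality. From the definition of the forward integral, I would first rewrite
\[\int_{0}^{T}\frac{1}{\varepsilon}\la A_{t},B_{(t+\varepsilon)\wedge T}-B_{t}\ra\md t=\int_{0}^{T}\lb\la A_{t},\int_{t}^{(t+\varepsilon)\wedge T}\md B_{s}\rb\ra\md t\]
and then apply the classical stochastic Fubini theorem. This interchange is legitimate because $A$ is predictable, and a direct application of Cauchy--Schwarz shows that
\[\E_{P}\lb[\int_{0}^{T}|A_{t}|\lb(\int_{t}^{(t+\varepsilon)\wedge T}\xi_{s}\md s\rb)^{1/2}\md t\rb]<\infty\]
under either set of hypotheses. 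After the interchange, the quantity becomes $\int_{0}^{T}\la A^{\varepsilon}_{s},\md B_{s}\ra$ with $A^{\varepsilon}_{s}:=\varepsilon^{-1}\int_{(s-\varepsilon)\vee 0}^{s}A_{t}\md t$.

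It then remains to show that $\int_{0}^{T}\la A^{\varepsilon}_{s}-A_{s},\md B_{s}\ra\to 0$ in $L^{\gamma}(P)$ as $\varepsilon\to 0$. By the BDG inequality this quantity is controlled by $C\,\E_{P}[(\int_{0}^{T}|A^{\varepsilon}_{s}-A_{s}|^{2}\xi_{s}\md s)^{\gamma/2}]$, and the Lebesgue differentiation theorem applied pathwise yields $A^{\varepsilon}_{s}(\omega)\to A_{s}(\omega)$ for Lebesgue-a.e.\ $s$ and $P$-a.s. In the first regime, the bound $|A^{\varepsilon}-A|^{2}\leq 4\sup_{t}|A_{t}|^{2}$ together with H\"older's inequality applied with conjugate exponents $2/(2-\gamma)$ and $2/\gamma$ yields the integrable dominating random variable $4^{\gamma/2}(\sup_{t}|A_{t}|)^{\gamma}(\int_{0}^{T}\xi_{s}\md s)^{\gamma/2}\in L^{1}(P)$, so that the claim follows from the dominated convergence theorem. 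In the second regime, I would use the Cauchy--Schwarz estimate $\int_{0}^{T}|A^{\varepsilon}_{s}|^{2}\md s\leq \int_{0}^{T}|A_{t}|^{2}\md t$ and apply H\"older with the same exponents to separate $(\sup_{t}\xi_{t})^{\gamma/2}$ from $(\int_{0}^{T}|A^{\varepsilon}-A|^{2}\md s)^{\gamma/2}$, then conclude via dominated convergence in $L^{2}([0,T]\times\Omega)$.

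The main obstacle will be tuning the H\"older exponents so that the two alternative integrability regimes each produce integrable dominating functions. The specific growth rates $2\gamma/(2-\gamma)$ and $\gamma/(2-\gamma)$ appearing in the statement are dictated precisely by the interplay of the BDG exponent $\gamma/2$ with H\"older's inequality; once this tuning is in place, the remaining steps (stochastic Fubini, BDG, Lebesgue differentiation, dominated convergence) are all standard.
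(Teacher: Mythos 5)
Your outline follows essentially the same route as the paper's own proof: use the classical stochastic Fubini theorem to identify the $\varepsilon$-difference quotient of the forward integral with $\int_0^T \la A^{\varepsilon}_s,\md B_s\ra$, then pass to the limit in $L^{\gamma}$ via BDG, Lebesgue differentiation, and dominated convergence, with H\"older's inequality tuned exactly as you describe to make the two integrability regimes work. Your treatment of the first regime is complete: the pointwise bound $|A^{\varepsilon}_s-A_s|^2\leq 4\sup_t|A_t|^2$ is a legitimate dominating function in $s$, and H\"older with exponents $2/(2-\gamma)$, $2/\gamma$ gives an $L^1(P)$ dominating random variable.

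There is, however, a gap in your second-regime argument. You invoke Lebesgue differentiation for pointwise convergence $A^{\varepsilon}_s\to A_s$ and then claim ``dominated convergence in $L^2([0,T]\times\Omega)$,'' but the only uniform estimate you supply is the averaging bound $\int_0^T|A^{\varepsilon}_s|^2\md s\leq \int_0^T|A_t|^2\md t$, which is a uniform $L^2$-norm bound over $\varepsilon$, \emph{not} a pointwise-in-$(s,\omega)$ dominating function. Dominated convergence requires a single integrable majorant valid for all $\varepsilon$. The paper closes exactly this gap by introducing the Hardy--Littlewood maximal process $A^*_t=\sup_{\varepsilon}\varepsilon^{-1}\int_{(t-\varepsilon)\vee 0}^t|A_s|\md s$, noting $|A^{\varepsilon}_s-A_s|^2\leq C\bigl((A^*_s)^2+|A_s|^2\bigr)$ uniformly in $\varepsilon$, and using the Hardy--Littlewood maximal inequality $\int_0^T|A^*_t|^2\md t\leq C\int_0^T|A_t|^2\md t$ to show the majorant lies in the right space. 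This single device also unifies the two regimes, whereas your proof handles them by different pointwise bounds. Your argument can be repaired either by inserting the maximal function exactly as the paper does, or by replacing Lebesgue differentiation $+$ DCT with the standard density argument (continuity of averaging operators in $L^2([0,T])$ pathwise, then DCT in $\omega$ with majorant $4\int_0^T|A_s|^2\md s$); but as written, the claimed dominating function in the second regime is not identified.
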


\begin{proof}
    We write the Hardy--Littlewood maximal process as
    \begin{equation*}
        A^{*}_{t}=\sup_{\varepsilon\in[0,T]} \frac{1}{\varepsilon} \int_{(t-\varepsilon)\vee 0}^{t} |A_{s}|\md s \quad \text{ for } t\in[0,T].
    \end{equation*}
    By Hardy--Littlewood maximal inequality, we have
    \(
    \int_{0}^{T} |A^{*}_{t}|^{2} \md t \leq C \int_{0}^{T} |A_{t}|^{2} \md t,
    \)
    where \(C\) is a deterministic constant.
    Combining this with the assumption and H\"older inequality, we derive either
    \begin{align*}
        \E_{P}\lb[\lb(\int_{0}^{T}|A^{*}_{t}|^{2}\md [B]_{t}\rb)^{\gamma/2}\rb]        \leq  \E_{P}\lb[ \sup_{t\in [0,T]}|A_{t}|^{\gamma} \lb(\int_{0}^{T}|\xi_{t}|\md t\rb)^{\gamma/2} \rb]<\infty,
    \end{align*}
    or
    \begin{equation*}
        \E_{P}\lb[\lb(\int_{0}^{T}|A^{*}_{t}|^{2}\md [B]_{t}\rb)^{\gamma/2}\rb]        \leq  \E_{P}\lb[ \lb(\int_{0}^{T}|A^{*}_{t}|^{2}\md t\rb)^{\gamma/2}\sup_{t\in [0,T]}|\xi_{t}|^{\gamma/2} \rb]<\infty.
    \end{equation*}
    Then by Lebesgue dominated convergence theorem, we obtain
    \begin{align*}
        \lim_{\varepsilon\to 0} \E_{P} \lb[ \lb(\int_{0}^{T}  \lb|\frac{1}{\varepsilon}\int_{(t-\varepsilon)\vee 0}^{t} A_{s}\md s-A_{t} \rb |^{2} \md [B]_{t}\rb)^{\gamma/2}\rb ]=0,
    \end{align*}
    as the integrand converges to 0 from Lebesgue differentiation theorem and is dominated by \(C\lb(\int_{0}^{T}|A^{*}_{t}|^{2}\md [B]_{t}\rb)^{\gamma/2}\).
    Therefore, by BDG inequality, we have the \(L^{\gamma}\) convergence
    \begin{equation*}
        \lim_{\varepsilon\to 0}\int_{0}^{T} \lb\la \frac{1}{\varepsilon}\int_{(t-\varepsilon)\vee 0}^{t}A_{s}\md s, \md B_{t} \rb\ra =\int_{0}^{T}\la A_{t}, \md B_{t}\ra.
    \end{equation*}
    On the other hand, by the Stochastic Fubini theorem \cite[Theorem 2.2]{veraar2012stochastic}, the above limit is equal to
    \begin{align*}
        \lim_{\varepsilon\to 0}\int_{0}^{T} \lb\la \frac{1}{\varepsilon}\int_{(t-\varepsilon)\vee 0}^{t}A_{s}\md s, \md B_{t} \rb\ra & =         \lim_{\varepsilon\to 0} \frac{1}{\varepsilon}\int_{0}^{T}\lb\la\int_{0}^{T} A_{s}\1_{[s,(s+\varepsilon)\wedge T]}(t)\md s,\md B_{t}\rb \ra \\
                                                                                                                                     & =  \lim_{\varepsilon\to 0} \frac{1}{\varepsilon}\int_{0}^{T}\int_{0}^{T}\la A_{s}\1_{[s,(s+\varepsilon)\wedge T]}(t),\md B_{t}\ra \md s              \\
                                                                                                                                     & =\lim_{\varepsilon\to 0} \frac{1}{\varepsilon}\int_{0}^{T}\la A_{t},B_{(t+\varepsilon)\wedge T}-B_{t}\ra\md t                                        \\
                                                                                                                                     & = \int_{0}^{T}\la A_{t},\md^{-}B_{t}\ra.
    \end{align*}
    Therefore, \(A\) is \(B\) forward--\(\gamma\) integrable, and the forward integral coincides with the It\^o integral.

\end{proof}

\begin{proof}[Proof of Theorem \ref{thm-fubini}]
    By the definition of the forward integral, we write
    \begin{align*}
        \quad \int_{0}^{T}\lb\la\int_{s}^{T} \Psi_{s,t}^{\intercal}\md X_{t},\md^{-} M_{s}\rb\ra & = \lim_{\varepsilon\to 0}\frac{1}{\varepsilon} \int_{0}^{T}\lb\la\int_{s}^{T}  \Psi_{s,t}^{\intercal}\md X_{t},(M_{(s+\varepsilon)\wedge T}-M_{s})\rb\ra    \md s                                             \\
                                                                                                 & = \lim_{\varepsilon\to 0} \frac{1}{\varepsilon} \int_{0}^{T}\lb\la\int_{s}^{(s+\varepsilon)\wedge T}  \Psi_{s,t}^{\intercal}\md X_{t},(M_{(s+\varepsilon)\wedge T}-M_{s})\rb\ra    \md s                      \\
                                                                                                 & \qquad +  \frac{1}{\varepsilon} \int_{0}^{T}\lb\la\int_{(s+\varepsilon)\wedge T}^{T}  \Psi_{s,t}^{\intercal}\md X_{t},(M_{(s+\varepsilon)\wedge T}-M_{s})\rb\ra    \md s                                      \\
                                                                                                 & =  \lim_{\varepsilon\to 0}\frac{1}{\varepsilon} \int_{0}^{T}\lb\la\int_{s}^{(s+\varepsilon)\wedge T}  (\Psi_{s,t}^{\intercal}-\Psi_{t,t}^{\intercal})\md X_{t},(M_{(s+\varepsilon)\wedge T}-M_{s})\rb\ra\md s \\
                                                                                                 & \qquad + \lim_{\varepsilon\to 0}\frac{1}{\varepsilon} \int_{0}^{T}\lb\la\int_{s}^{(s+\varepsilon)\wedge T}  \Psi_{t,t}^{\intercal}\md X_{t},(M_{(s+\varepsilon)\wedge T}-M_{s})\rb\ra\md s                    \\
                                                                                                 & \qquad +  \lim_{\varepsilon\to 0}\frac{1}{\varepsilon} \int_{0}^{T}\lb\la\int_{s}^{T}  \Psi_{s,t}(M_{(s+\varepsilon)\wedge t}-M_{s}),\md X_{t}\rb \ra\md s                                                    \\
                                                                                                 & \qquad - \lim_{\varepsilon\to 0} \frac{1}{\varepsilon} \int_{0}^{T}\lb\la\int_{s}^{(s+\varepsilon)\wedge T}  \Psi_{s,t}(M_{ t}-M_{s}),\md X_{t}\rb\ra\md s                                                    \\
                                                                                                 & := J_{1} + J_{2} + J_{3} - J_{4}.
    \end{align*}
    It suffices to show the \(L^{\gamma}\) convergence of \(J_{1}\), \(J_{2}\), \(J_{3}\), and \(J_{4}\) and compute their limits.
    For \(J_{1}\), by  H\"older inequality, BDG inequality, and Fubini theorem, we obtain
    \begin{align*}
         & \lim_{\varepsilon\to 0}\E_{P}\lb[\lb|\frac{1}{\varepsilon}  \int_{0}^{T}\lb\la\int_{s}^{(s+\varepsilon)\wedge T} (\Psi_{s,t}^{\intercal}- \Psi_{t,t}^{\intercal})\md X_{t},M_{(s+\varepsilon)\wedge T}-M_{s}\rb\ra\md s\rb|^{\gamma}\rb]                                                                    \\
         & \leq \lim_{\varepsilon\to 0} \frac{C}{\varepsilon^{\gamma}} \int_{0}^{T} \E_{P}\lb[\lb|\lb\la\int_{s}^{(s+\varepsilon)\wedge T} (\Psi_{s,t}^{\intercal}- \Psi_{t,t}^{\intercal})\md X_{t},M_{(s+\varepsilon)\wedge T}-M_{s}\rb\ra \rb|^{\gamma}\rb]\md s                                                    \\
         & \leq \lim_{\varepsilon\to0} \frac{C}{\varepsilon^{\gamma}} \int_{0}^{T}  \E_{P}\lb[\lb|\int_{s}^{(s+\varepsilon)\wedge T}(\Psi_{s,t}^{\intercal}- \Psi_{t,t}^{\intercal})\md X_{t}  \rb|^{2\gamma/(2-\gamma)}\rb]^{1-\gamma/2} \E_{P}\lb[\lb|M_{(s+\varepsilon)\wedge T}-M_{s}\rb|^{2}\rb]^{\gamma/2} \md s \\
         & \leq \lim_{\varepsilon\to 0}\frac{C}{\varepsilon^{\gamma}} \int_{0}^{T}     \E_{P}\lb[\lb( \int_{s}^{(s+\varepsilon)\wedge T}\| \Psi_{s,t}-\Psi_{t,t}\|_{\bF}^{2}\md t\rb)^{\gamma/(2-\gamma)}\rb]^{1-\gamma/2} \E_{P}\lb[[M]_{(s+\varepsilon)\wedge T}- [M]_{s}\rb]^{\gamma/2}\md s                        \\
         & \leq \lim_{\varepsilon\to 0}  C\lb(\int_{0}^{T}\lb(\frac{1}{\varepsilon}\int_{s}^{(s+\varepsilon)\wedge T}\E_{P}\lb[\|\Psi_{s,t}-\Psi_{t,t}\|_{\bF}^{2}\rb]\md t\rb)^{\gamma/(2-\gamma)} \md s\rb)^{1-\gamma/2}                                                                                             \\
         & \hspace{7cm} \times \lb(\frac{1}{\varepsilon}\int_{0}^{T} \E_{P}\lb[[M]_{(s+\varepsilon)\wedge T}- [M]_{s}\rb] \md s\rb)^{\gamma/2}                                                                                                                                                                         \\
         & \leq \lim_{\varepsilon\to 0}  C\lb(\int_{0}^{T}\lb(\frac{1}{\varepsilon}\int_{s}^{(s+\varepsilon)\wedge T}\E_{P}\lb[\|\Psi_{s,t}-\Psi_{t,t}\|_{\bF}^{2}\rb]\md t\rb)^{\gamma/(2-\gamma)} \md s\rb)^{1-\gamma/2} \E_{P}[[M]_{T}]^{\gamma/2}.
    \end{align*}
    Since we assume  \(
    \sup_{s,t\in[0,T]}\|\Psi_{s,t}\|\in L^{2\gamma/(2-\gamma)},
    \)
    and \(\lim_{t\to s+}\E_{P}[\|\Psi_{s,t}-\Psi_{t,t}\|_{\bF}^{2}]=0\), by Lebesgue dominated convergence theorem we obtain the \(L^{\gamma}\) convergence of
    \begin{equation*}
        J_{1}= \lim_{\varepsilon\to 0}\frac{1}{\varepsilon} \int_{0}^{T}\lb\la\int_{s}^{(s+\varepsilon)\wedge T}  (\Psi_{s,t}^{\intercal}-\Psi_{s,t}^{\intercal})\md X_{t},(M_{(s+\varepsilon)\wedge T}-M_{s})\rb\ra\md s =0.
    \end{equation*}
    We write \(Y_{s}=\int_{0}^{s}\Psi^{\intercal}_{t,t}\md X_{t}\) which is a regular martingale.
    Reorganizing the second term \(J_{2}\), we obtain
    \begin{align*}
        J_{2} & = \lim_{\varepsilon\to 0}\frac{1}{\varepsilon} \int_{0}^{T}\lb\la\int_{s}^{(s+\varepsilon)\wedge T}  \Psi_{t,t}^{\intercal}\md X_{t},M_{(s+\varepsilon)\wedge T}-M_{s}\rb\ra\md s                                                                 \\
              & =\lim_{\varepsilon\to0 }\frac{1}{\varepsilon}\int_{0}^{T}\lb\la Y_{(s+\varepsilon)\wedge T}-Y_{s},M_{(s+\varepsilon)\wedge T}-M_{s}\rb\ra\md s                                                                                                    \\
              & =\lim_{\varepsilon\to0} \frac{1}{\varepsilon}\int_{0}^{T}\lb(\la  Y_{(s+\varepsilon)\wedge T},M_{(s+\varepsilon)\wedge T}\ra-\la Y_{s},M_{s} \ra\rb)\md  s                                                                                        \\
              & \qquad - \lim_{\varepsilon\to 0}\frac{1}{\varepsilon}\int_{0}^{T}\lb\la M_{s}, Y_{(s+\varepsilon)\wedge T}-Y_{s}\rb\ra\md s - \lim_{\varepsilon\to 0} \frac{1}{\varepsilon}\int_{0}^{T}\lb\la Y_{s},M_{(s+\varepsilon)\wedge T}-M_{s}\rb\ra \md s \\
              & = \la Y_{T}, M_{T}\ra -\int_{0}^{T}\la M_{s}, \md Y_{s}\ra -\int_{0}^{T}\la Y_{s}, \md M_{s}\ra                                                                                                                                                   \\
              & =  \int_{0}^{T}\la \Psi_{s,s}, \md \lt X,M\rt_{s} \ra _{\bF}.
    \end{align*}
    The \(L^{\gamma}\) convergence of the second last line is justified from Proposition \ref{prop-forward} by taking \((A,B)=(Y,M)\) and \((A,B)=(M,Y)\) respectively.
    For \(J_{3}\), we interchange the Lebesgue integral and the It\^o integral by \cite[Theorem 2.2]{veraar2012stochastic} and derive
    \begin{align*}
        J_{3} & =  \lim_{\varepsilon\to 0}\frac{1}{\varepsilon} \int_{0}^{T}\lb\la\int_{0}^{t}  \Psi_{s,t}(M_{(s+\varepsilon)\wedge t}-M_{s})\md s,\md X_{t}\rb\ra \\
              & =\int_{0}^{T}\lb\la\int_{0}^{t} \Psi_{s,t}\md^{-} {M}_{s}, \md X_{t}\rb\ra.
    \end{align*}
    The last equality does converge in \(L^{\gamma}\) by combining BDG inequality and the  assumption that \(\{\Psi_{\cdot,t}\}_{t\in[0,T]}\) is \(M_{\cdot}\) uniformly forward--\(\gamma\) integrable.
    For the last term \(J_{4}\), we apply \cite[Theorem 2.2]{veraar2012stochastic} and BDG inequality.
    We derive
    \begin{align*}
         & \lim_{\varepsilon\to 0} \E_{P}\lb[\lb|\frac{1}{\varepsilon} \lb\la \int_{0}^{T}\int_{s}^{(s+\varepsilon)\wedge T}  \Psi_{s,t}(M_{ t}-M_{s}),\md X_{t}\rb \ra\md s\rb|^{\gamma}\rb]                                               \\
         & =\lim_{\varepsilon\to 0} \E_{P}\lb[\lb|\frac{1}{\varepsilon} \int_{0}^{T}\lb\la\int_{(t-\varepsilon)\vee 0}^{t}  \Psi_{s,t}(M_{ t}-M_{s})\md s,\md X_{t}\rb\ra\rb|^{\gamma}\rb]                                                  \\
         & \leq \lim_{\varepsilon\to 0} C \E_{P}\lb[\sup_{s,t\in[0,T]}\|\Psi_{s,t}\|_{\bF}^{\gamma}\lb(\int_{0}^{T}\frac{1}{\varepsilon^{2}}\lb(\int_{t-\varepsilon\vee 0}^{t}|M_{t}-M_{s}| \md s \rb)^{2} \md t \rb)^{\gamma/2}\rb]        \\
         & \leq \lim_{\varepsilon\to 0}  C\lb(\int_{0}^{T} \E_{P}\lb[\sup_{s\in[(t-\varepsilon)\vee 0,t]}|M_{t}-M_{s}|^{2} \rb]\md t\rb)^{\gamma/2} \E_{P}\lb[\sup_{s,t\in[0,T]}\|\Psi_{s,t}\|_{\bF}^{2\gamma/(2-\gamma)}\rb]^{1-\gamma/2}  \\
         & \leq \lim_{\varepsilon\to 0}C \lb(\int_{0}^{T} \E_{P}[[M]_{t}-[M]_{(t-\varepsilon)\vee 0}] \md t\rb)^{\gamma/2}                                                                                                              =0.
    \end{align*}
    Summarizing above estimates, we conclude  \(\int_{\cdot}^{T} \Psi_{\cdot,t}^{\intercal}\md X_{t}\) is \(M_{\cdot}\) forward--\(\gamma\) integrable and
    \begin{align*}
        \int_{0}^{T}\lb\la\int_{s}^{T} \Psi_{s,t}^{\intercal}\md X_{t},\md^{-} M_{s}\rb\ra & =J_{1}+J_{2}+J_{3}-J_{4}                                                                                                                       \\
                                                                                           & =   \int_{0}^{T}\lb\la\int_{0}^{t} \Psi_{s,t} \md^{-} M_{s}, \md X_{t}\rb\ra + \int_{0}^{T}\lb\la \Psi_{t,t},\md \lt X, M\rt_{t}\rb\ra_{\bF} .
    \end{align*}
\end{proof}

\section*{Acknowledgement}
Y. Jiang acknowledges the support by the EPSRC Centre for Doctoral Training in Mathematics of Random Systems: Analysis, Modelling, and Simulation (EP/S023925/1).

\bibliography{transfer}

\begin{thebibliography}{58}
\providecommand{\natexlab}[1]{#1}
\providecommand{\url}[1]{\texttt{#1}}
\expandafter\ifx\csname urlstyle\endcsname\relax
  \providecommand{\doi}[1]{doi: #1}\else
  \providecommand{\doi}{doi: \begingroup \urlstyle{rm}\Url}\fi

\bibitem[Acciaio et~al.(2020)Acciaio, {Backhoff-Veraguas}, and
  Zalashko]{acciaioCausalOptimalTransport2020}
B.~Acciaio, J.~{Backhoff-Veraguas}, and A.~Zalashko.
\newblock Causal optimal transport and its links to enlargement of filtrations
  and continuous-time stochastic optimization.
\newblock \emph{Stochastic Process. Appl.}, 130\penalty0 (5):\penalty0
  2918--2953, May 2020.

\bibitem[Aldous(1981)]{aldousWeakConvergenceGeneral1981}
D.~Aldous.
\newblock Weak convergence and the general theory of processes.
\newblock Incomplete draft of monograph, July 1981.

\bibitem[{Backhoff-Veraguas} et~al.(2017){Backhoff-Veraguas}, Beiglb{\"o}ck,
  Lin, and Zalashko]{backhoff-veraguasCausalTransportDiscrete2017}
J.~{Backhoff-Veraguas}, M.~Beiglb{\"o}ck, Y.~Lin, and A.~Zalashko.
\newblock Causal transport in discrete time and applications.
\newblock \emph{SIAM J. Optim.}, 27\penalty0 (4):\penalty0 2528--2562, Jan.
  2017.

\bibitem[{Backhoff-Veraguas} et~al.(2020){Backhoff-Veraguas}, Bartl,
  Beiglb{\"o}ck, and Eder]{backhoff-veraguasAllAdaptedTopologies2020}
J.~{Backhoff-Veraguas}, D.~Bartl, M.~Beiglb{\"o}ck, and M.~Eder.
\newblock All adapted topologies are equal.
\newblock \emph{Probab. Theory Relat. Fields}, 178\penalty0 (3-4):\penalty0
  1125--1172, Dec. 2020.

\bibitem[Bai et~al.(2023)Bai, He, Jiang, and Ob{\l}{\'o}j]{bai2024wasserstein}
X.~Bai, G.~He, Y.~Jiang, and J.~Ob{\l}{\'o}j.
\newblock Wasserstein distributional robustness of neural networks.
\newblock \emph{Advances in Neural Information Processing Systems},
  36:\penalty0 26322--26347, 2023.

\bibitem[Bartl and Wiesel(2023)]{bartlSensitivityMultiperiodOptimization2022}
D.~Bartl and J.~Wiesel.
\newblock Sensitivity of multiperiod optimization problems with respect to the
  adapted wasserstein distance.
\newblock \emph{SIAM J. Financial Math.}, 14\penalty0 (2):\penalty0 704--720,
  2023.

\bibitem[Bartl et~al.(2021{\natexlab{a}})Bartl, Beiglb{\"o}ck, and
  Pammer]{bartlWassersteinSpaceStochastic2021}
D.~Bartl, M.~Beiglb{\"o}ck, and G.~Pammer.
\newblock The {{Wasserstein}} space of stochastic processes, Apr.
  2021{\natexlab{a}}.
\newblock arXiv:2104.14245.

\bibitem[Bartl et~al.(2021{\natexlab{b}})Bartl, Drapeau, Ob{\l}{\'o}j, and
  Wiesel]{bartlSensitivityAnalysisWasserstein2021}
D.~Bartl, S.~Drapeau, J.~Ob{\l}{\'o}j, and J.~Wiesel.
\newblock Sensitivity analysis of {{Wasserstein}} distributionally robust
  optimization problems.
\newblock \emph{Proc. R. Soc. A.}, 477\penalty0 (2256):\penalty0 20210176, Dec.
  2021{\natexlab{b}}.

\bibitem[Bartl et~al.(2023)Bartl, Neufeld, and Park]{bartl2023sensitivity}
D.~Bartl, A.~Neufeld, and K.~Park.
\newblock Sensitivity of robust optimization problems under drift and
  volatility uncertainty, 2023.
\newblock arXiv:2311.11248.

\bibitem[Bartl et~al.(2025)Bartl, Beiglb{\"o}ck, Pammer, Schrott, and
  Zhang]{bartl25Wasserstein}
D.~Bartl, M.~Beiglb{\"o}ck, G.~Pammer, S.~Schrott, and X.~Zhang.
\newblock The {{Wasserstein}} space of stochastic processes in continuous time,
  Jan. 2025.
\newblock arXiv:2501.14135.

\bibitem[Beiglb{\"o}ck et~al.(2022)Beiglb{\"o}ck, Pammer, and
  Schrott]{beiglbock22Denseness}
M.~Beiglb{\"o}ck, G.~Pammer, and S.~Schrott.
\newblock Denseness of biadapted {{Monge}} mappings, Oct. 2022.
\newblock arXiv:2210.15554.

\bibitem[Beiglb{\"o}ck et~al.(2023)Beiglb{\"o}ck, Pammer, Schrott, and
  Zhang]{beiglbock23Representing}
M.~Beiglb{\"o}ck, G.~Pammer, S.~Schrott, and X.~Zhang.
\newblock Representing general stochastic processes as martingale laws, Dec.
  2023.
\newblock arXiv:2312.16725.

\bibitem[Blanchet and Murthy(2019)]{blanchetQuantifyingDistributionalModel2019}
J.~Blanchet and K.~Murthy.
\newblock Quantifying distributional model risk via optimal transport.
\newblock \emph{Mathematics of OR}, 44\penalty0 (2):\penalty0 565--600, May
  2019.

\bibitem[Blanchet et~al.(2019)Blanchet, Kang, and
  Murthy]{blanchet_kang_murthy_2019}
J.~Blanchet, Y.~Kang, and K.~Murthy.
\newblock Robust {Wasserstein} profile inference and applications to machine
  learning.
\newblock \emph{J. Appl. Probab.}, 56\penalty0 (3):\penalty0 830--857, 2019.

\bibitem[Cont and Fourni{\'e}(2013)]{contfournie:13}
R.~Cont and D.-A. Fourni{\'e}.
\newblock {Functional It{\^o} calculus and stochastic integral representation
  of martingales}.
\newblock \emph{Ann. Probab.}, 41\penalty0 (1):\penalty0 109 -- 133, 2013.

\bibitem[Cont and Perkowski(2019)]{cont2019pathwise}
R.~Cont and N.~Perkowski.
\newblock Pathwise integration and change of variable formulas for continuous
  paths with arbitrary regularity.
\newblock \emph{Trans. Amer. Math. Soc. Ser. B}, 6\penalty0 (5):\penalty0
  161--186, 2019.

\bibitem[Dupire(2019)]{dupire2019functional}
B.~Dupire.
\newblock Functional {I}t\^o calculus.
\newblock \emph{Quant. Finance}, 19\penalty0 (5):\penalty0 721--729, 2019.

\bibitem[Fuhrmann et~al.(2023)Fuhrmann, Kupper, and
  Nendel]{fuhrmann2023wasserstein}
S.~Fuhrmann, M.~Kupper, and M.~Nendel.
\newblock Wasserstein perturbations of markovian transition semigroups.
\newblock In \emph{Annales de l'Institut Henri Poincare (B) Probabilites et
  statistiques}, volume~59, pages 904--932. Institut Henri Poincar{\'e}, 2023.

\bibitem[Gao(2023)]{gao2023finite}
R.~Gao.
\newblock Finite-sample guarantees for {Wasserstein} distributionally robust
  optimization: Breaking the curse of dimensionality.
\newblock \emph{Operations Research}, 71\penalty0 (6):\penalty0 2291--2306,
  2023.

\bibitem[Gao and Kleywegt(2023)]{gao2023distributionally}
R.~Gao and A.~Kleywegt.
\newblock Distributionally robust stochastic optimization with {Wasserstein}
  distance.
\newblock \emph{Mathematics of Operations Research}, 48\penalty0 (2):\penalty0
  603--655, 2023.

\bibitem[Gilboa and Schmeidler(1989)]{gilboa89}
I.~Gilboa and D.~Schmeidler.
\newblock {Maxmin expected utility with non-unique prior}.
\newblock \emph{J. Math. Econom.}, 18\penalty0 (2):\penalty0 141--153, 1989.

\bibitem[Gozlan et~al.(2017)Gozlan, Roberto, Samson, and
  Tetali]{gozlan2017kantorovich}
N.~Gozlan, C.~Roberto, P.-M. Samson, and P.~Tetali.
\newblock Kantorovich duality for general transport costs and applications.
\newblock \emph{J. Funct. Anal.}, 273\penalty0 (11):\penalty0 3327--3405, 2017.

\bibitem[Hellwig(1996)]{hellwigSequentialDecisionsUncertainty1996}
M.~F. Hellwig.
\newblock Sequential decisions under uncertainty and the maximum theorem.
\newblock \emph{J. Math. Econom.}, 25\penalty0 (4):\penalty0 443--464, Jan.
  1996.

\bibitem[Herrmann and Muhle-Karbe(2017)]{herrmann2017model}
S.~Herrmann and J.~Muhle-Karbe.
\newblock Model uncertainty, recalibration, and the emergence of delta--vega
  hedging.
\newblock \emph{Finance Stoch.}, 21:\penalty0 873--930, 2017.

\bibitem[Herrmann et~al.(2017)Herrmann, Muhle-Karbe, and
  Seifried]{herrmann2017hedging}
S.~Herrmann, J.~Muhle-Karbe, and F.~T. Seifried.
\newblock Hedging with small uncertainty aversion.
\newblock \emph{Finance Stoch.}, 21:\penalty0 1--64, 2017.

\bibitem[Hoover and Keisler(1984)]{hoover1984adapted}
D.~N. Hoover and H.~J. Keisler.
\newblock Adapted probability distributions.
\newblock \emph{Trans. Amer. Math. Soc.}, 286\penalty0 (1):\penalty0 159--201,
  1984.

\bibitem[Jiang(2024)]{jiang24Duality}
Y.~Jiang.
\newblock Duality of causal distributionally robust optimization: The
  discrete-time case, Jan. 2024.
\newblock arXiv:2401.16556.

\bibitem[K\"allblad et~al.(2018)K\"allblad, Ob\l{}\'{o}j, and
  Zariphopoulou]{kallblad2018dynamically}
S.~K\"allblad, J.~Ob\l{}\'{o}j, and T.~Zariphopoulou.
\newblock Dynamically consistent investment under model uncertainty: the robust
  forward criteria.
\newblock \emph{Finance Stoch.}, 22\penalty0 (4):\penalty0 879--918, 2018.

\bibitem[Karandikar(1995)]{karandikar95pathwise}
R.~L. Karandikar.
\newblock On pathwise stochastic integration.
\newblock \emph{Stochastic Processes and their Applications}, 57\penalty0
  (1):\penalty0 11--18, May 1995.

\bibitem[Kipnis and Landim(1999)]{kipnis99Scaling}
C.~Kipnis and C.~Landim.
\newblock \emph{Scaling {{Limits}} of {{Interacting Particle Systems}}}, volume
  320 of \emph{Grundlehren Der Mathematischen {{Wissenschaften}}}.
\newblock Springer, 1999.

\bibitem[Langner et~al.(2024)Langner, Neufeld, and
  Park]{langner2024markovnashequilibriameanfieldgames}
J.~Langner, A.~Neufeld, and K.~Park.
\newblock {Markov--Nash} equilibria in mean-field games under model
  uncertainty, 2024.
\newblock arXiv:2410.11652.

\bibitem[Larsen and {\v{Z}}itkovi{\'c}(2007)]{larsen2007stability}
K.~Larsen and G.~{\v{Z}}itkovi{\'c}.
\newblock Stability of utility-maximization in incomplete markets.
\newblock \emph{Stochastic Process. Appl.}, 117\penalty0 (11):\penalty0
  1642--1662, 2007.

\bibitem[Lassalle(2018)]{lassalleCausalTransportPlans2018}
R.~Lassalle.
\newblock Causal transport plans and their {{Monge}}\textendash{{Kantorovich}}
  problems.
\newblock \emph{Stoch. Anal. Appl.}, 36\penalty0 (3):\penalty0 452--484, May
  2018.

\bibitem[Maccheroni et~al.(2006)Maccheroni, Marinacci, and
  Rustichini]{maccheroni2006ambiguity}
F.~Maccheroni, M.~Marinacci, and A.~Rustichini.
\newblock Ambiguity aversion, robustness, and the variational representation of
  preferences.
\newblock \emph{Econometrica}, 74\penalty0 (6):\penalty0 1447--1498, 2006.

\bibitem[Matoussi et~al.(2015)Matoussi, Possama\"i, and
  Zhou]{matoussi2015robust}
A.~Matoussi, D.~Possama\"i, and C.~Zhou.
\newblock Robust utility maximization in nondominated models with 2bsde: the
  uncertain volatility model.
\newblock \emph{Math. Finance}, 25\penalty0 (2):\penalty0 258--287, 2015.

\bibitem[Merton(1969)]{merton:69}
R.~Merton.
\newblock Lifetime portfolio selection under uncertainty: the continuous time
  case.
\newblock \emph{Rev. Econ. Stat.}, 51:\penalty0 247--257, 1969.

\bibitem[Mirmominov and
  Wiesel(2024)]{mirmominov2024dynamicprogrammingprinciplemultiperiod}
R.~Mirmominov and J.~Wiesel.
\newblock A dynamic programming principle for multiperiod control problems with
  bicausal constraints, 2024.
\newblock arXiv:2410.23927.

\bibitem[Mohajerin~Esfahani and Kuhn(2018)]{mohajerin2018data}
P.~Mohajerin~Esfahani and D.~Kuhn.
\newblock Data-driven distributionally robust optimization using the
  {Wasserstein} metric: Performance guarantees and tractable reformulations.
\newblock \emph{Mathematical Programming}, 171\penalty0 (1):\penalty0 115--166,
  2018.

\bibitem[Mostovyi and S{\^\i}rbu(2019)]{mostovyi2019sensitivity}
O.~Mostovyi and M.~S{\^\i}rbu.
\newblock Sensitivity analysis of the utility maximisation problem with respect
  to model perturbations.
\newblock \emph{Finance Stoch.}, 23:\penalty0 595--640, 2019.

\bibitem[Nendel and Sgarabottolo(2022)]{nendel22parametric}
M.~Nendel and A.~Sgarabottolo.
\newblock A parametric approach to the estimation of convex risk functionals
  based on {{Wasserstein}} distance, Oct. 2022.
\newblock arXiv:2210.14340.

\bibitem[Neufeld and Nutz(2018)]{neufeld2018robust}
A.~Neufeld and M.~Nutz.
\newblock Robust utility maximization with {L{\'e}vy} processes.
\newblock \emph{Math. Finance}, 28\penalty0 (1):\penalty0 82--105, 2018.

\bibitem[Neufeld and Sester(2024)]{NeufeldQLearning}
A.~Neufeld and J.~Sester.
\newblock Robust {Q-learning} algorithm for markov decision processes under
  {Wasserstein} uncertainty.
\newblock \emph{Automatica}, 168:\penalty0 111825, 2024.

\bibitem[Nietert et~al.(2023)Nietert, Goldfeld, and Shafiee]{NietertWDRO}
S.~Nietert, Z.~Goldfeld, and S.~Shafiee.
\newblock Outlier-robust {Wasserstein DRO}.
\newblock In A.~Oh, T.~Naumann, A.~Globerson, K.~Saenko, M.~Hardt, and
  S.~Levine, editors, \emph{Advances in Neural Information Processing Systems},
  volume~36, pages 62792--62820. Curran Associates, Inc., 2023.

\bibitem[Nualart(2006)]{nualart2006malliavin}
D.~Nualart.
\newblock \emph{The Malliavin calculus and related topics}, volume 1995.
\newblock Springer, 2006.

\bibitem[Ob{\l}{\'o}j and Wiesel(2021)]{obloj2021distributionally}
J.~Ob{\l}{\'o}j and J.~Wiesel.
\newblock Distributionally robust portfolio maximization and marginal utility
  pricing in one period financial markets.
\newblock \emph{Mathematical Finance}, 31\penalty0 (4):\penalty0 1454--1493,
  2021.

\bibitem[Pammer(2024)]{pammer24note}
G.~Pammer.
\newblock A note on the adapted weak topology in discrete time.
\newblock \emph{Electronic Communications in Probability}, 29:\penalty0 1--13,
  Jan. 2024.

\bibitem[Pflug and Pichler(2012)]{pflug2012distance}
G.~C. Pflug and A.~Pichler.
\newblock A distance for multistage stochastic optimization models.
\newblock \emph{SIAM J. Optim.}, 22\penalty0 (1):\penalty0 1--23, 2012.

\bibitem[Rahimian and
  Mehrotra(2019)]{rahimianDistributionallyRobustOptimization2019}
H.~Rahimian and S.~Mehrotra.
\newblock Distributionally robust optimization: A review, Aug. 2019.
\newblock arXiv:1908.05659.

\bibitem[Russo and Vallois(1993)]{russo1993forward}
F.~Russo and P.~Vallois.
\newblock Forward, backward and symmetric stochastic integration.
\newblock \emph{Probab. Theory Relat. Fields}, 97:\penalty0 403--421, 1993.

\bibitem[Sauldubois and Touzi(2024)]{sauldubois24First}
N.~Sauldubois and N.~Touzi.
\newblock First order martingale model risk and semi-static hedging, Oct. 2024.
\newblock arXiv:2410.06906.

\bibitem[Schied(2007)]{schied}
A.~Schied.
\newblock Optimal investments for risk- and ambiguity-averse preferences: a
  duality approach.
\newblock \emph{Finance Stoch.}, 11\penalty0 (1):\penalty0 107--129, 2007.

\bibitem[Shafiee et~al.(2024)Shafiee, Aolaritei, Dörfler, and
  Kuhn]{shafiee2024newperspectives}
S.~Shafiee, L.~Aolaritei, F.~Dörfler, and D.~Kuhn.
\newblock New perspectives on regularization and computation in optimal
  transport-based distributionally robust optimization, 2024.
\newblock arXiv:2303.03900.

\bibitem[Tevzadze et~al.(2013)Tevzadze, Toronjadze, and
  Uzunashvili]{tevzadze2013robust}
R.~Tevzadze, T.~Toronjadze, and T.~Uzunashvili.
\newblock Robust utility maximization for a diffusion market model with
  misspecified coefficients.
\newblock \emph{Finance Stoch.}, 17:\penalty0 535--563, 2013.

\bibitem[Veraar(2012)]{veraar2012stochastic}
M.~Veraar.
\newblock The stochastic fubini theorem revisited.
\newblock \emph{Stochastics}, 84\penalty0 (4):\penalty0 543--551, 2012.

\bibitem[Vershik(1970)]{vershik1970decreasing}
A.~M. Vershik.
\newblock Decreasing sequences of measurable partitions and their applications.
\newblock \emph{Soviet Math. Doklady}, 11\penalty0 (4):\penalty0 1007--1011,
  1970.

\bibitem[Wu and Jaimungal(2023)]{wu2023robust}
D.~Wu and S.~Jaimungal.
\newblock Robust risk-aware option hedging.
\newblock \emph{Applied Mathematical Finance}, 30\penalty0 (3):\penalty0
  153--174, 2023.

\bibitem[Zhang et~al.(2022)Zhang, Yang, and Gao]{zhangSimpleGeneralDuality2022}
L.~Zhang, J.~Yang, and R.~Gao.
\newblock A simple and general duality proof for {Wasserstein} distributionally
  robust optimization, Oct. 2022.
\newblock arXiv:2205.00362.

\bibitem[Zitkovi\'c(2009)]{gordan}
G.~Zitkovi\'c.
\newblock A dual characterization of self-generation and exponential forward
  performances.
\newblock \emph{Ann. Appl. Probab.}, 19\penalty0 (6):\penalty0 2176--2210,
  2009.

\end{thebibliography}
\end{document}